\newtheorem{theorem}{Theorem}
\newtheorem{proposition}{Proposition}[section]
\newtheorem{remark}{Remark}[section]
\newtheorem{condition}{Condition}
\newtheorem{lemma}{Lemma}[section]
\newcommand{\ind}{1\hspace{-.27em}\mbox{\rm l}}
\begin{document}

\title{On the Limiting Behaviour of Needlets Polyspectra\footnote{Research Supported by ERC Grant 277742 \emph{Pascal}}}
\author{Valentina Cammarota and Domenico Marinucci \footnote{Corresponding author. e-mail: marinucc@mat.uniroma2.it}\\
Department of Mathematics, University of Rome Tor Vergata}
\numberwithin{equation}{section}

\maketitle

\abstract{This paper provides quantitative Central Limit Theorems
for nonlinear transforms of spherical random fields, in the high
frequency limit. The sequences of fields that we consider are
represented as smoothed averages of spherical Gaussian
eigenfunctions and can be viewed as random coefficients from
continuous wavelets/needlets; as such, they are of immediate
interest for spherical data analysis. In particular, we focus on
so-called needlets polyspectra, which are popular tools for
nonGaussianity analysis in the astrophysical community, and on the
area of excursion sets. Our results are based on Stein-Maliavin
approximations for nonlinear transforms of Gaussian fields, and on
an explicit derivation on the high-frequency limit of the fields'
variances, which may have some independent interest}.

\begin{itemize}

\item{Keywords and Phrases: Spherical Random Fields,
Stein-Malliavin Approximations, Polyspectra, Excursion Sets,
Wavelets, Needlets}

\item{AMS Classification: : 60G60; 62M15, 42C15}

\end{itemize}

\section{Introduction}

\subsection{Background and Notation}

Let $\left\{ f(x),%
\text{ }x\in S^{2}\right\} $ denote a Gaussian, zero-mean
isotropic spherical random field, i.e. for some probability space
$(\Omega ,\Im ,P)$ the application $f(x,\omega )\rightarrow
\mathbb{R}$ is $\left\{ \Im \times \mathcal{B}(S^{2})\right\} $
measurable, $\mathcal{B}(S^{2})$ denoting the Borel $\sigma
$-algebra on the sphere. We shall use $d \sigma(x)$ to denote the
Lebesgue measure on the sphere which, in spherical coordinates, is
defined as $d \sigma(x):=\sin \theta d \theta d \varphi$. It is
well-known that the following
representation holds, in the mean square sense (see for instance \cite
{leonenko2}, \cite{marpecbook}, \cite{mal}, \cite{mal2}):

$$f(x)=\sum_{{\ell }m}a_{\ell m} Y_{\ell m}(x)=\sum_{\ell=1}^\infty f_{\ell}(x), \hspace{1cm} f_{\ell}(x)=\sum_{m=-\ell}^{\ell} a_{\ell m} Y_{\ell m}(x),$$
where $\left\{ Y_{\mathbb{\ell }m}(.)\right\} $ denotes the family
of spherical harmonics, and $\left\{ a_{\mathbb{\ell }m}\right\} $
the array of
random spherical harmonic coefficients, which satisfy $\mathbb{E}a_{\mathbb{%
\ell }m}\overline{a}_{\mathbb{\ell }^{\prime }m^{\prime }}=C_{\mathbb{\ell }%
}\delta _{\mathbb{\ell }}^{\mathbb{\ell }^{\prime }}\delta
_{m}^{m^{\prime }}$; here, $\delta _{a}^{b}$ is the Kronecker
delta function, and the sequence $\left\{ C_{\mathbb{\ell
}}\right\} $ represents the angular power spectrum of the field.
As pointed out in \cite{mp2012}, under isotropy the
sequence $C_{\mathbb{\ell }}$ necessarily satisfies $\sum_{\mathbb{\ell }} C_\ell \frac{(2
\mathbb{\ell }+1)}{4 \pi}=\mathbb{E}T^{2}<\infty $ and
the random field $f(x)$ is mean square continuous.

The Fourier components $\left\{ f_{\mathbb{%
\ell }}(x)\right\} $, can be viewed as random eigenfunctions of
the spherical
Laplacian:%
\begin{equation*}
\Delta _{S^{2}}f_{\mathbb{\ell }}=-\mathbb{\ell }(\mathbb{\ell }+1)f_{%
\mathbb{\ell }}\text{ , }\mathbb{\ell }=1,2,...;
\end{equation*}%
the random fields $\{f_{\ell }(x), x \in S^2\}$ are isotropic,
meaning that the probability laws of $f_\ell(\cdot)$ and  $f^g_\ell(\cdot):=f_\ell( g \cdot)$ are the same for any rotation $g \in SO(3)$. Also, $\{f_{\ell }(\cdot)\}$ is centred Gaussian, with
covariance function
$$\mathbb{E}[f_{\ell }(x) f_{\ell }(y)]=C_\ell \frac{2\ell+1}{4 \pi}  P_{\ell }(\cos d (x,y))$$
where $P_\ell$ are the usual Legendre polynomials defined dy
Rodrigues' formula
$$P_{\ell }(t):= \frac{1}{2^{\ell } \ell !} \frac{d^l}{d t^l} (t^2-1)^{\ell },$$
and $d(x,y)$ is the spherical geodesic distance between $x$ and
$y$. Their asymptotic behaviour of $f_{\ell}(x)$ and their
nonlinear transforms has been studied for instance by \cite{BGS},
\cite{Wig1}, \cite{Wig2}, see also \cite{MaWi,MaWi2,MaWi3}.

More often, however, statistical procedures to handle with
spherical data are based upon wavelets-like constructions, rather
than standard Fourier analysis. For instance, the
astrophysical/cosmological literature on these issues is vast, see
for instance \cite{Wiaux}, \cite{Starck} and the references
therein. As well-known, indeed, the double localization properties
of wavelets (in real and harmonic domain) turn out to be of great
practical value when handling real data.

In view of these motivations, we shall focus here on sequence of
spherical random fields which can be viewed as averaged forms of
the spherical eigenfunctions, e.g.
\begin{equation*}
\beta _{j}(x)=\sum_{\ell=2^{j-1}}^{2^{j+1}}b(\frac{\mathbb{\ell }}{B^{j}})f_{\mathbb{%
\ell }}(x)\text{ , }j=1,2,3...
\end{equation*}%
for $b(.)$ a weight function whose properties we shall discuss
immediately. The fields $\left\{ \beta _{j}(x)\right\} $ can
indeed be viewed as a
representation of the coefficients from a continuous wavelet transform from $%
T(x),$ at scale $j$, see also the discussion in \cite{marinuccivadlamani}. More precisely, consider the kernel%
\begin{align*}
\Psi _{j}(\left\langle x,y\right\rangle ) &:=\sum_{\mathbb{\ell }}b(\frac{%
\mathbb{\ell }}{B^{j}})\frac{2\mathbb{\ell }+1}{4\pi }P_{\mathbb{\ell }%
}(\left\langle x,y\right\rangle ) \\
&=\sum_{\mathbb{\ell }}b(\frac{\mathbb{\ell
}}{B^{j}})\sum_{m=-\mathbb{\ell
}}^{\mathbb{\ell }}Y_{\mathbb{\ell }m}(x)\overline{Y}_{\mathbb{\ell }m}(y)%
\text{.}
\end{align*}%
Assume that $b(.)$ is smooth (e.g. $C^{\infty })$, compactly supported in $%
[B^{-1},B],$ and satisfying the partition of unity property $\sum_{j}b^{2}(%
\frac{\mathbb{\ell }}{B^{j}})=1,$ for all $\ell >B$. Then $\Psi
_{j}(\left\langle x,y\right\rangle )$ can be viewed as a
continuous version of the needlet transform, which was introduced
by Narcowich et al. in \cite{npw1}, and considered from the point
of view of statistics and cosmological data
analysis by many subsequent authors, starting from \cite{bkmpAoS}, \cite%
{mpbb08}, \cite{pbm06}. In this framework, the following
localization property is now well-known: for all $M\in
\mathbb{N}$, there exists a
constant $C_{M}$ such that%
\begin{equation*}
\left\vert \Psi _{j}(\left\langle x,y\right\rangle )\right\vert \leq \frac{%
C_{M}B^{2j}}{\left\{ 1+B^{j}d(x,y)\right\} ^{M}}\text{ ,}
\end{equation*}%
where $d(x,y)=\arccos (\left\langle x,y\right\rangle )$ is the
usual
geodesic distance on the sphere. Heuristically, the field%
\begin{equation*}
\beta _{j}(x)=\int_{S^{2}}\Psi _{j}(\left\langle x,y\right\rangle
)f(y)dy=\sum_{\mathbb{\ell }}b(\frac{\mathbb{\ell }}{B^{j}})f_{\mathbb{\ell }%
}(x)\text{ }
\end{equation*}%
is then only locally determined, i.e., for $B^{j}$ large enough
its value depends only from the behaviour of $f(y)$ in a
neighbourhood of $x.$ This is a very important property, for
instance when dealing with spherical random fields which can only
be partially observed, the canonical example being provided by the
masking effect of the Milky Way on Cosmic Microwave Background
radiation \cite{PlanckNG, PlanckIS}.

It is hence very natural to produce out of $\left\{ \beta
_{j}(x)\right\} $ nonlinear statistics of great practical
relevance. For instance, it is readily seen that%
\begin{equation*}
\mathbb{E}\left\{ \beta _{j}^{2}(x)\right\} =\sum_{\mathbb{\ell }}b(\frac{%
\mathbb{\ell }}{B^{j}})\frac{2\mathbb{\ell }+1}{4\pi }C_{\mathbb{\ell }}%
\text{ ,}
\end{equation*}%
which hence suggests a natural \textquotedblleft
local\textquotedblright\ estimator for a binned form of the
angular power spectrum. More generally, we might focus on statistics of the form%
\begin{equation*}
\nu_{j;q}:=\int_{S^{2}}H_{q}(\beta _{j}(x))dx\text{ ,}
\end{equation*}%
where $H_{q}(.)$ is the Hermite polynomial of $q$-th order, which
can be labelled needlets polyspectra for a straightforward analogy
with the Fourier case. For $q=3$ we obtain for instance the
needlets bispectrum, which was in introduced in \cite{lanm} and
then widely used on CMB data to study nonGaussian behaviour, see
for instance \cite{rudjord1,rudjord2,donzelli} for more discussion
and details; for $q=4$ we obtain the needlets trispectrum, which
is the natural candidate to estimate higher-order nonGaussian
behaviour such as the one introduced by cubic models through the
parameter $g_{NL}$, see \cite{PlanckNG}. As we shall show below,
the analysis of such polyspectra for arbitraty values of $q$
provides moreover natural building blocks for other nonlinear
functionals of the field $\beta_j(x)$ we shall investigate in
particular quantitative Central Limit Theorems for the excursion
sets, as $j \rightarrow \infty$.

Concerning this point, we stress that the limiting behaviour we
consider is in the high frequency sense, e.g. assuming that a
single realization of a spherical random field is observed at
higher and higher resolution as more and more refined experiments
are implemented. This is the
setting adopted in \cite{marpecbook}, see also \cite{anderes},\cite{loh},%
\cite{wang},\cite{steinm} for the related framework of
fixed-domain asymptotics, and \cite{PlanckNG, PlanckIS} for applications to
 cosmological data analysis.   


\subsection{Statement of the main results}
The main technical contribution of this paper is the derivation of
analytical expressions for the asymptotic variance of the needlet
polyspectra $\nu_{j;q}$. In particular, under suitable regularity
conditions on angular power spectra we shall be able to show the
following result (compare \cite{MaWi2}).

\begin{theorem} \label{00}
For $q \geq 2$,   we have
$$\lim_{j \to \infty}2^{2j}{\rm Var}[ \nu_{j;q}]= q! c_q$$
where $$c_2= \frac{8 \pi^2 }{\left( \int_{\frac 1 2}^{2} b^2(x) x^{1- \alpha} d x \right)^2}  \int_{\frac 1 2}^{2}   b^4(x_1) x_1^{1-2 \alpha}  d x_1;$$
\begin{align*}c_3&=  \frac{16 \pi}{  \left(\int_{\frac 1 2}^{2} b^2(x) x^{1- \alpha} d x \right)^3} \int_{\frac 1 2}^2 \cdots \int_{\frac 1 2}^2 \prod_{i=1}^3 b^2(x_i)  x_i^{1-\alpha}  \\
&\hspace{0.4cm} \times    \frac{1}{\sqrt{  x_1  +   x_2 -  x_3 } \sqrt{  x_1  -   x_2 +  x_3 } \sqrt{-  x_1  +   x_2 +  x_3 } \sqrt{x_1  +   x_2 +  x_3 }} \\
&\hspace{0.4cm} \times  \ind_{P_3}(x_1,x_2,x_3) d x_1 d x_2 d x_3;\end{align*}
\begin{align*}c_4&=  \frac{32}{  \left(\int_{\frac 1 2}^{2} b^2(x) x^{1- \alpha} d x \right)^4}
\int_{\frac{1}{2}}^{2} \cdots \int_{\frac{1}{2}}^{2}  \prod_{i=1}^4   b^2(x_i) x_i^{1-\alpha}   \\
& \hspace{0.4cm} \times  \int_{0}^{4} y    \frac{1}{\sqrt{-x_1+x_2+ y } \sqrt{x_1-x_2+ y } \sqrt{x_1+x_2- y } \sqrt{x_1+x_2+ y }} \\
&\hspace{0.4cm} \times \frac{1}{\sqrt{-x_3+x_4+ y } \sqrt{x_3-x_4+ y } \sqrt{x_3+x_4- y } \sqrt{x_3+x_4+ y }}\\
& \hspace{0.4cm} \times \ind_{P_3}(x_1,x_2, y) \ind_{P_3}(y,x_3,x_4)   \;  dy \; d x_1\cdots d x_4;
\end{align*}
and finally for $q \geq 5$ 
$$c_q=\frac{8 \pi^2 }{ \left( \int_{\frac 1 2}^{2} b^2(x) x^{1-\alpha} d x \right)^q} \int_{\frac 1 2}^2 \cdots  \int_{\frac 1 2}^2  \int_0^\infty  \prod_{k=1}^q b^2(x_k) x_k^{1-\alpha}  J_0(x_k \psi) \psi d \psi d x_1 \cdots d x_q;$$
where $P_3$ is the set of all $(x_1, x_2, x_3) \in \mathbb{R}^3$ that satisfy the `triangular' conditions (\ref{poly}).
\end{theorem}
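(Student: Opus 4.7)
The plan is to reduce $\mathrm{Var}[\nu_{j;q}]$ to an integral of the $q$-th power of the normalized correlation $\rho_j(x,y):=\mathbb{E}[\beta_j(x)\beta_j(y)]/\sigma_j^2$ via the Hermite orthogonality relation, then convert the problem into a universal Bessel integral (via Hilb's asymptotics and a rescaling of the geodesic distance) whose closed-form evaluation yields the constants $c_q$. Applying $\mathbb{E}[H_q(X)H_q(Y)]=q!\,\rho^q$ for a jointly standard Gaussian pair $(X,Y)$ with correlation $\rho$ to the standardized field gives
\[\mathrm{Var}[\nu_{j;q}] = q!\int_{S^2}\int_{S^2}\rho_j(x,y)^q\,d\sigma(x)\,d\sigma(y).\]
Under $C_\ell\sim\ell^{-\alpha}$, the one-point variance $\sigma_j^2=\sum_\ell b^2(\ell/B^j)(2\ell+1)C_\ell/(4\pi)$ is a Riemann sum with step $B^{-j}$, asymptotic to $(2\pi)^{-1}B^{j(2-\alpha)}\int_{1/2}^{2}b^2(u)u^{1-\alpha}\,du$, which isolates the normalization factor $(\int b^2 u^{1-\alpha}du)^{-q}$ appearing in every $c_q$.

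After rescaling $\psi = B^j d(x,y)$, Hilb's asymptotic $P_\ell(\cos(\psi/B^j)) = J_0(u\psi)+O(B^{-j})$ (uniform for $u=\ell/B^j\in[1/2,2]$ and $\psi$ in compacta) yields
\[\rho_j(x,y)\longrightarrow\rho(\psi):=\frac{\int_{1/2}^{2}b^2(u)u^{1-\alpha}J_0(u\psi)\,du}{\int_{1/2}^{2}b^2(u)u^{1-\alpha}\,du}.\]
The needlet localization $|\Psi_j(\langle x,y\rangle)|\leq C_M B^{2j}(1+B^j d(x,y))^{-M}$ concentrates $\rho_j$ at $d(x,y)\lesssim B^{-j}$, so $\int_{S^2}\int_{S^2}F(d(x,y))\,d\sigma(x)d\sigma(y) = 8\pi^2\int_0^\pi F(\theta)\sin\theta\,d\theta$ is well approximated by $8\pi^2 B^{-2j}\int_0^\infty F(\psi/B^j)\psi\,d\psi$ for rapidly decaying $F$; combining everything one reaches the universal limit
\[B^{2j}\,\mathrm{Var}[\nu_{j;q}]\longrightarrow 8\pi^2\,q!\int_0^\infty \rho(\psi)^q\,\psi\,d\psi\qquad(B=2).\]

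Expanding $\rho(\psi)^q$ and swapping the $\psi$-integral inside by Fubini reduces every $c_q$ to the $q$-fold Bessel integral $\int_0^\infty\prod_{k=1}^q J_0(u_k\psi)\,\psi\,d\psi$, to be treated case by case. For $q=2$ I would use Hankel orthogonality $\int_0^\infty J_0(u_1\psi)J_0(u_2\psi)\psi\,d\psi = u_1^{-1}\delta(u_1-u_2)$, collapsing the double integral to $\int b^4(u)u^{1-2\alpha}du$. For $q=3$ I would invoke the classical triple-Bessel identity $\int_0^\infty J_0(u_1\psi)J_0(u_2\psi)J_0(u_3\psi)\psi\,d\psi = (2\pi A)^{-1}\ind_{P_3}$ with $A$ the Heron area, producing the four square-root factors of $c_3$. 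For $q=4$ I would factorize via Parseval for the Hankel transform, inserting an auxiliary variable $y$ dual to $\psi$ and applied separately to the pairs $J_0(u_1\psi)J_0(u_2\psi)$ and $J_0(u_3\psi)J_0(u_4\psi)$, yielding the double $\ind_{P_3}$ structure. For $q\geq 5$ no simplification exists and the integral is left in the stated Bessel form. The main obstacle is the uniform-in-$\psi$ justification of the sum-to-integral passage together with the distributional/conditionally convergent nature of the Bessel integrals for $q=2,3$: one must treat the limit through truncated or regularized sums, showing via the localization bound on $\Psi_j$ that both the Hilb remainder and the tail contribution from $d(x,y)\gtrsim B^{-j}$ vanish after the $B^{2j}$ rescaling.
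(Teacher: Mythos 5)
Your proposal is correct and reaches all the stated constants, but it takes a genuinely different route from the paper for the heart of the computation. Both arguments open identically: Hermite orthogonality (Proposition \ref{j-g}) reduces ${\rm Var}[\nu_{j;q}]$ to $q!\int_{S^2\times S^2}\tilde\rho_j^{\,q}$, and both use Hilb's asymptotics plus the localization bound (\ref{corrineq}). From there the paper expands $\tilde\rho_j^{\,q}$ into a $q$-fold sum over multipoles $\ell_1,\dots,\ell_q$ and evaluates $\int_0^\pi P_{\ell_1}\cdots P_{\ell_q}\sin\theta\,d\theta$ term by term: for $q>4$ via Hilb (Lemmas \ref{lemma var}--\ref{lem}), and for $q=2,3,4$ via \emph{exact} finite-$\ell$ identities (Legendre orthogonality, Wigner $3j$ coefficients, the product formula for spherical harmonics) followed by Stirling asymptotics of the $3j$ symbols (Lemma \ref{wigner}). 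You instead pass to the scaling limit of the covariance function itself, $\tilde\rho_j(\psi/2^j)\to\rho(\psi)$, obtaining the single universal formula $2^{2j}{\rm Var}[\nu_{j;q}]\to 8\pi^2 q!\int_0^\infty\rho(\psi)^q\psi\,d\psi$ for every $q$, and then evaluate by classical Hankel-transform identities: Plancherel for $q=2$, the Gradshteyn--Ryzhik triple-Bessel formula (which the paper itself records in a remark as a consistency check) for $q=3$, and Hankel--Parseval with an auxiliary dual variable for $q=4$. Your route is more unified and avoids the Clebsch--Gordan machinery entirely; what the paper's discrete route buys is that the delicate interchange of limits is confined to absolutely convergent situations, since the algebraic identities hold exactly at finite $\ell$, whereas in your scheme the $(q{+}1)$-fold Bessel integral is only conditionally convergent for $q\le 4$. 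You correctly flag this as the main obstacle; to close it you should replace the formal Hankel orthogonality $\int_0^\infty J_0(u_1\psi)J_0(u_2\psi)\psi\,d\psi=u_1^{-1}\delta(u_1-u_2)$ by a genuine Plancherel/Parseval argument applied to $\rho$ (whose Hankel transform is the compactly supported function $b^2(u)u^{-\alpha}$ up to normalization), which makes the $q=2$ and $q=4$ cases rigorous without distributions and handles $q=3$ by pairing $\rho$ against $\rho^2$. With that repair, and with the domination supplied by $|\tilde\rho_j(\psi/2^j)|\le C_M(1+\psi)^{-M}$, your argument is complete.
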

Here, $J_0$ denotes the standard Bessel function of order zero,
defined as usual by
$$ J_0(x)= \sum _ {k=0}^\infty \frac{(-1)^k x^{2k}}{2^{2k}(k!)^2} \text{ .} $$
For each $q \geq 2$, the scaling factor for the needlets
polyspectra is of order $2^{2j}$. This result can be heuristically
explained as follows. Needlets polyspectra can be viewed as linear
combination of random polynomials of degree $q \times j$. On a
compact manifold as the sphere, there exist exact cubature
formulae for such polynomials, so that the integrals defining
$\nu_{j;q}$ can be really expressed as finite averages sums, of
cardinality $2^{2j}$. In view of the uncorrelation inequality (\ref{corrineq}),
we expect the variance of these averages to scale as the inverse
of the number of summands, e.g. exactly $2^{-2j}$. Making this
heuristic rigorous is indeed quite challenging, and requires a
careful analysis on the behaviour of Legendre polynomials (Hilb's
asymptotics, see \cite{MaWi2, MaWi3}) and Clebsch-Gordan/Wigner's coefficients.

Once the asymptotic behaviour of the variance is established, in
view of the celebrated results from Nourdin and Peccati
\cite{nourdinpeccati} the derivation of quantitative Central
Limit Theorems and total variation/Wasserstein distances limits
requires only the analysis of fourth-order cumulants. These
computations are quite standard and provided in Section \ref{TV}, where
it is hence shown that
\begin{theorem}
For $\mathcal{N}$ a standard Gaussian random variable, as $j
\rightarrow \infty$, we have that
\begin{align*}
d_{TV} \left(  \frac{\nu_{j;q}}{\sqrt{Var(\nu_{j;q})}},
\mathcal{N} \right) = O(2^{-2j}),
\end{align*}
\end{theorem}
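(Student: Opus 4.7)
The plan is to invoke the quantitative Nourdin--Peccati fourth-moment theorem \cite{nourdinpeccati}: for every element $F$ of the $q$-th Wiener chaos with $\mathrm{Var}(F)=\sigma^{2}$,
$$
d_{TV}\!\left(\frac{F}{\sigma},\mathcal{N}\right) \leq c_{q}\,\frac{\sqrt{|\kappa_{4}(F)|}}{\sigma^{2}}\text{.}
$$
Since Theorem~\ref{00} already gives $\mathrm{Var}(\nu_{j;q})\sim q!\,c_{q}\,2^{-2j}$, the target bound $d_{TV}=O(2^{-2j})$ reduces to showing
$$
\kappa_{4}(\nu_{j;q})=O(2^{-8j})\text{,}
$$
after which dividing by $\sigma^{4}\sim (q!\,c_{q})^{2}\,2^{-4j}$ and taking a square root produces the claimed rate.

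First I would verify that $\nu_{j;q}$ really does live in the $q$-th Wiener chaos of $f$. Because $\beta_{j}(x)$ is a linear functional of $f$ and, by isotropy, its variance is constant in $x$, after the implicit unit-variance normalisation of the needlet field each $H_{q}(\beta_{j}(x))$ is a chaos element of order exactly $q$, and integrating against $d\sigma(x)$ preserves the chaos order. This legitimises the use of the Nourdin--Peccati bound above.

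Next I would expand the fourth cumulant via the classical diagram formula for nonlinear Gaussian functionals. Let
$$
K_{j}(x,y):=\mathbb{E}[\beta_{j}(x)\beta_{j}(y)]=\sum_{\ell}b^{2}\!\left(\frac{\ell}{B^{j}}\right)\frac{2\ell+1}{4\pi}P_{\ell}(\cos d(x,y))\text{;}
$$
then $\kappa_{4}(\nu_{j;q})$ is a finite sum, indexed by the \emph{connected} Gaussian diagrams on four vertices of degree $q$, of integrals over $(S^{2})^{4}$ of products of $2q$ factors of $K_{j}$ along the edges of the diagram. Disconnected contributions drop out because we are computing a cumulant rather than a moment. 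Each such integral is then estimated by reducing to iterated spherical convolutions of $K_{j}$ and invoking the uncorrelation/localisation inequality~(\ref{corrineq}) together with the Hilb-type asymptotics on Legendre polynomials already exploited in the proof of Theorem~\ref{00}.

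The main obstacle will be the uniform bookkeeping across the connected diagrams: for each of them one must check that the integrated product of four ``legs'' of $K_{j}$ decays at the rate $O(2^{-8j})$, i.e.\ one order in $2^{-2j}$ better than what an a priori bound on the square of the variance would give. The needed input is essentially the same Clebsch--Gordan/Wigner coefficient analysis underlying Theorem~\ref{00}, carried out now on closed four-cycle diagrams instead of two-point convolutions, plus the fact that each additional needlet kernel integrated against $d\sigma$ contributes a factor of order $B^{-2j}$. As the authors note, these computations are \emph{quite standard} once the sharp variance asymptotics is in hand, and they are the content of Section~\ref{TV}.
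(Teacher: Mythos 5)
Your overall strategy coincides with the paper's: Section \ref{TV} likewise combines the fourth-moment bound $d\left(\nu_{j;q}/\sqrt{\mathrm{Var}(\nu_{j;q})},\mathcal{N}\right)\le 2\sqrt{\tfrac{q-1}{3q}\,\mathrm{cum}_4(\nu_{j;q})/\mathrm{Var}^2(\nu_{j;q})}$ with the diagram formula for connected diagrams (Proposition \ref{cum}) and the localization inequality (\ref{corrineq}); your chaos-membership check is also exactly the paper's.

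The gap is in the quantitative target you set for the cumulant. You correctly observe that the claimed rate $O(2^{-2j})$ forces $\mathrm{cum}_4(\nu_{j;q})=O(2^{-8j})$, but the method you propose cannot deliver that order, and neither does the paper: the proof of Theorem \ref{ccc} establishes only $\mathrm{cum}_4(\nu_{j;q})=O(2^{-6j})$. The source of the discrepancy is a counting slip in your statement that ``each additional needlet kernel integrated against $d\sigma$ contributes a factor of order $B^{-2j}$'': in the connected-diagram integral over $(S^2)^4$ one of the four points is free by rotational invariance (it contributes only the factor $4\pi$), so only \emph{three} localized integrations remain, each worth $O(2^{-2j})$, for a total of $O(2^{-6j})$. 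This order is sharp for any bound derived from (\ref{corrineq}) alone: the region where all four points lie within $O(2^{-j})$ of one another has measure of order $4\pi\,(2^{-2j})^{3}$, and the integrand is of order one there. Improving to $O(2^{-8j})$ would require exhibiting genuine cancellation coming from the oscillation of the kernel $\rho_j$, and for diagrams in which every multiplicity $\eta_{ik}$ is even (e.g.\ $q=4$ with a doubled four-cycle) the integrand is nonnegative, so no such cancellation is available within those diagrams. Your proposal gives no indication of how this extra decay would be obtained.

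Consequently, what the fourth-moment bound actually yields from $\mathrm{cum}_4=O(2^{-6j})$ and $\mathrm{Var}^2(\nu_{j;q})\asymp 2^{-4j}$ is $d_{TV}=O\bigl(\sqrt{2^{-2j}}\bigr)=O(2^{-j})$. (This is also all that the paper's own displayed computation supports; the exponent $2^{-2j}$ is the order of the ratio $\mathrm{cum}_4/\mathrm{Var}^2$ \emph{before} the square root is taken.) So you must either supply a new argument for additional decay of the connected-diagram integrals beyond the localization bound, or revise the target rate to $O(2^{-j})$.
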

\noindent $d_{TV}$ denoting as usual Total Variation distance between random
variables, see below for details and definitions. While this
result is quite straightforward given the previous computations on
the asymptotic variance, it has several statistical applications
for handling Gaussian random fields data, where wavelets
polyspectra are widely exploited.

It is also possible to establish a more challenging result on the
behaviour of excursion sets, which we expand in the $L^2$ sense in
terms of the polyspectra. More precisely, let us define the
empirical measure $\Phi_j(z)$ as follows: for all $z \in
(-\infty,\infty)$ we have
$$\Phi_j(z):=\int_{S^2} \ind_{\{ \tilde{\beta}_j(x) \le z\}} d \sigma(x),$$
where $\tilde{\beta}_j(x)$ has been normalized to have unit
variance; the function $\Phi_j(z)$ provides the (random) measure
of the set where $\tilde{\beta}_j$ lies below the value $z$. We
shall hence be able to prove the following

\begin{theorem} \label{WD}
For $\mathcal{N}$ a standard Gaussian random variable, as  $j \to
\infty$ we have
$$d_W \left( \frac{{\tilde \Phi}_j(z)}{\sqrt{\text{Var} [{\tilde \Phi}_j(z)]}}, \mathcal{N}  \right)=O\left(\frac{1}{\sqrt[4]{j}} \right),$$
\end{theorem}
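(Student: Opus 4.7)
The plan is to reduce the problem to the polyspectrum CLT via a Hermite expansion of the indicator function, and then to combine a truncation argument with the multidimensional Stein-Malliavin apparatus. Since $\tilde\beta_j(x)$ is a standard Gaussian for every $x\in S^2$, the classical expansion
$$\ind_{\{\tilde\beta_j(x)\le z\}}=\Phi(z)+\sum_{q=1}^{\infty}\frac{\gamma_q(z)}{q!}H_q(\tilde\beta_j(x)),\qquad \gamma_q(z):=-H_{q-1}(z)\phi(z),$$
holds, with $\phi,\Phi$ the standard Gaussian density and distribution function. Integrating over the sphere yields
$$\tilde\Phi_j(z)-\mathbb{E}[\tilde\Phi_j(z)]=\sum_{q=1}^{\infty}\frac{\gamma_q(z)}{q!}\,\tilde\nu_{j;q},\qquad \tilde\nu_{j;q}:=\int_{S^2}H_q(\tilde\beta_j(x))\,d\sigma(x).$$
Since the $\tilde\nu_{j;q}$ live in distinct Wiener chaoses they are orthogonal in $L^2(\Omega)$, and Theorem \ref{00} delivers ${\rm Var}[\tilde\Phi_j(z)]\sim 2^{-2j}\sum_q \gamma_q(z)^2 c_q/q!$ termwise.

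Next I would fix a truncation level $Q=Q(j)\to\infty$, to be optimized later, and split $\tilde\Phi_j(z)-\mathbb{E}[\tilde\Phi_j(z)]=A_{j,Q}(z)+R_{j,Q}(z)$, where $A_{j,Q}$ collects the terms with $1\le q\le Q$. Uniform bounds on the constants $c_q$ combined with Plancherel-Rotach asymptotics for Hermite polynomials give a tail estimate ${\rm Var}[R_{j,Q}]\le C\,2^{-2j}\,r(Q)$ with $r(Q)\to 0$. For the head, the vector $(\tilde\nu_{j;1},\dots,\tilde\nu_{j;Q})$ belongs to a finite direct sum of chaoses; the multivariate fourth-moment theorem of Nourdin-Peccati, together with the contraction estimates already computed in Section \ref{TV} (which extend routinely to unequal chaos orders $p\ne q$), yields
$$d_W\!\left(\frac{A_{j,Q}(z)}{\sqrt{{\rm Var}[A_{j,Q}(z)]}},\mathcal{N}\right)\le C\,p(Q)\,2^{-2j}$$
for some polynomial factor $p(Q)$. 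The triangle inequality for $d_W$ combined with standard variance-normalization manipulations then gives
$$d_W\!\left(\frac{\tilde\Phi_j(z)}{\sqrt{{\rm Var}[\tilde\Phi_j(z)]}},\mathcal{N}\right)\le C_1\,p(Q)\,2^{-2j}+C_2\sqrt{r(Q)},$$
and balancing the two contributions in $Q$ produces the announced rate $O(j^{-1/4})$.

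The main obstacle will be the sharp control of the tail decay $r(Q)$: since $c_q$ involves integrals of the product $\prod_{k=1}^{q}J_0(x_k\psi)$ against $\psi\,d\psi$, uniform estimates as $q\to\infty$ require delicate oscillatory-integral arguments to capture the cancellations among the Bessel factors. A secondary subtlety is the uniformity in $z$ of the Wasserstein bound, which is handled through the classical asymptotics $H_{q-1}(z)^2/(q-1)!=O(q^{-1/2})$ for fixed $z$ governing the coefficients $\gamma_q(z)^2/q!$ in the Hermite expansion; these are the two ingredients that ultimately determine the exponent $1/4$ in the final rate.
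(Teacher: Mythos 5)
Your overall architecture matches the paper's proof: Hermite expansion of the indicator, truncation at level $Q$, a Stein--Malliavin bound on the truncated sum, an $L^2$ bound on the tail, and a final balancing of the two errors. However, there is a genuine quantitative gap at the heart of your argument: the claim that the head satisfies a bound of the form $C\,p(Q)\,2^{-2j}$ with $p$ \emph{polynomial} in $Q$. When one carries out the contraction estimates (whether through the univariate bound $\mathbb{E}\,|\sigma^2-\langle DF,-DL^{-1}F\rangle_{\mathfrak H}|$ applied to the full truncated sum, as the paper does, or through the multivariate fourth-moment theorem, as you propose), the combinatorial sums $\sum_{r}(r-1)!^2\binom{q-1}{r-1}^2\binom{q'-1}{r-1}^2(q+q'-2r)!$ weighted by the coefficients ${\cal J}_q(z)/q!$ produce a factor that grows like $3^{Q}$, i.e.\ \emph{exponentially} in the truncation level, so the head term is of order $3^{Q}2^{-j}$. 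This is precisely what forces the choice $Q\asymp j$ (logarithmic in the frequency $2^{j}$), after which the tail term $Q^{-1/4}$ dominates and yields the rate $j^{-1/4}$. Your version is internally inconsistent: if $p(Q)$ really were polynomial, balancing $p(Q)2^{-2j}$ against $\sqrt{r(Q)}\asymp Q^{-1/4}$ would permit $Q$ polynomial in $2^{j}$ and would produce a rate that is polynomially small in $2^{-j}$, not the logarithmically slow $j^{-1/4}$ you announce. The exponential growth in $Q$ is not a technicality one can wave away; it is the reason the theorem's rate is as weak as it is.

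A secondary point: you have misplaced the difficulty in the tail estimate. No delicate oscillatory-integral analysis of $\prod_k J_0(x_k\psi)$ uniformly in $q$ is needed. Since $|\tilde\rho_j|\le 1$, one has $2^{2j}\mathrm{Var}[\nu_{j;q}]/q! = 2^{2j}\int_{S^2\times S^2}\tilde\rho_j^{\,q}\,d\sigma\,d\sigma \le 2^{2j}\int_{S^2\times S^2}\tilde\rho_j^{\,2}\,d\sigma\,d\sigma$, which is bounded uniformly in $q\ge 2$ and $j$ directly from the correlation inequality (\ref{corrineq}); the tail decay then comes entirely from the Hermite coefficients, ${\cal J}_q(z)^2/q! = O(q^{-3/2})$ for fixed $z$, giving $\sum_{q>Q}q^{-3/2}\lesssim Q^{-1/2}$ and hence the $Q^{-1/4}$ contribution in Wasserstein distance. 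Finally, note that your ``standard variance-normalization manipulations'' hide a third term in the triangle inequality — the distance between the Gaussian with the truncated variance and the standard Gaussian — which contributes $O(Q^{-1/2})$ and is harmless, but should be stated.
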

\noindent $d_{W}$ denoting Wasserstein distance between random variables.
This result is close in spirit to some recent work by Viet-Hung Pham
\cite{V-HP}, who considered a Euclidean setting and
traditional large-sample asymptotics; we exploit several ideas
from his proof in our argument below.

\section{Malliavin operators and quantitative Central Limit Theorems}
In a number of recent papers, summarized in the monograph by
Nourdin and Peccati \cite{nourdinpeccati}, a
beautiful connection has been established  between Malliavin calculus and 
the Stein method
to prove quantitative Central Limit Theorems on functional of
Gaussian subordinated random processes.  In this section we review
briefly some notation on isonormal Gaussian processes and
Malliavin operators and we state the main results on Normal
approximations on Wiener chaos, which we shall exploit in the
sequel of the paper; we follow closely \cite{noupebook}.

Let $\mathfrak H$ be a real separable Hilbert space, with inner product $\langle \cdot, \cdot \rangle_{\mathfrak H}$. An {\it isonormal Gaussian process} over $\mathfrak H$ is a collection $X=\{X(h): h \in {\mathfrak H}\}$ of jointly Gaussian random variables defined on some probability space $(\Omega, {\mathcal F}, \mathbb{P})$, such that $\mathbb{E} [X(h) X(g)] = \langle h,g  \rangle_{\mathfrak H}$ for every $h,g \in {\mathfrak H}$. We assume that ${\cal F}$ is generated by $X$.

If $A$ is a Polish space (e.g. complete, metric and separable), $\cal A$ the associated $\sigma$-field and  $\mu$ a positive, $\sigma$-finite and non-atomic measure,  then $\mathfrak{H}=L^2(A,\cal{A},\mu)$ is a real separable Hilbert space with  inner product $\langle g,h\rangle_{\mathfrak{H}}=\int_A g(a) h(a) \mu(da)$. For every $h \in \mathfrak{H}$ it is possible to define the isonormal Gaussian process
\begin{align} \label{W} X(h)=\int_A  h(a) W(da) \end{align}
to be the Wiener-It\^o integral of $h$ with respect to the Gaussian family $W=\{W(B): B \in {\cal A}, \mu(B)< \infty \}$ such that for every $B,C \in {\mathcal A}$ of finite $\mu$-measure $\mathbb{E}[W(B) W(C)]=\mu(B \cap C)$.

Throughout this paper, we shall make extensive use of Hermite polynomials $H_q(x)$. We recall the usual definition: $H_0(x)=1$ and, for every integer $q \ge 1$,
$$H_q(x)=(-1)^q \phi^{-1}(x) \frac{d^q}{dx^q} \phi(x),$$
where $\phi(x)$ is the probability density function of a standard Gaussian variable.

For each $q\ge 0$ the $q$-th {\it Wiener chaos} ${\mathcal{H}_q}$ of $X$ is the closed linear subspace of $L^2(\Omega, {\mathcal F}, \mathbb{P})$ generated by the random variables of type $H_q(X(h))$, $h \in {\mathfrak H}$ such that $||h||_{\mathfrak H}=1$.

The following property of Hermite polynomials is useful for our discussion (for a proof see \cite{noupebook}, Proposition 2.2.1). 
\begin{proposition} \label{j-g}
Let $Z_1,Z_2 \sim {\cal N}(0,1)$ be jointly Gaussian. Then, for all $n,m \ge 0$
\begin{align} \label{EH} \mathbb{E} [H_n(Z_1) H_m(Z_2)]= n! \{\mathbb{E}[Z_1 Z_2]\}^n\end{align}
if $m=n$ and  $\mathbb{E} [H_n(Z_1) H_m(Z_2)]= 0$ if $n \ne m$.
\end{proposition}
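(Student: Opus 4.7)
The plan is to prove this classical identity via the exponential generating function for Hermite polynomials, namely
$$\exp\!\left(sx - \tfrac{s^2}{2}\right) = \sum_{n \geq 0} \frac{s^n}{n!} H_n(x),$$
which can be read off the definition $H_n(x) = (-1)^n \phi(x)^{-1} \frac{d^n}{dx^n}\phi(x)$ by Taylor-expanding the ratio $\phi(x-s)/\phi(x)$ in the variable $s$.

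First I would substitute $Z_1$ and $Z_2$ into the two corresponding generating series and take the expectation of their product; exchanging expectation and summation (justified momentarily) yields
$$\sum_{n,m \geq 0} \frac{s^n t^m}{n!\,m!} \,\mathbb{E}[H_n(Z_1) H_m(Z_2)] = \mathbb{E}\!\left[\exp\!\left(sZ_1 + tZ_2 - \tfrac{s^2 + t^2}{2}\right)\right].$$
Since $(Z_1, Z_2)$ is jointly Gaussian with unit variances and covariance $\rho := \mathbb{E}[Z_1 Z_2]$, the linear combination $sZ_1 + tZ_2$ is centered Gaussian with variance $s^2 + 2st\rho + t^2$, and the Laplace-transform computation collapses the right-hand side to $\exp(st\rho) = \sum_{n \geq 0}\frac{\rho^n}{n!}(st)^n$.

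Matching coefficients of the bivariate monomials $s^n t^m$ on the two sides then delivers the conclusion: for $n \neq m$ no such monomial appears on the right, forcing $\mathbb{E}[H_n(Z_1) H_m(Z_2)] = 0$, while for $n = m$ one reads off $\mathbb{E}[H_n(Z_1) H_n(Z_2)] = n!\,\rho^n$. The only delicate point — really the only obstacle — is the interchange of expectation with the double summation, which follows from standard Gaussian moment bounds showing that the series converges absolutely in $L^1$ on a small bivariate neighborhood of $(s,t) = (0,0)$.

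As an alternative route I could invoke the isonormal Gaussian framework introduced just above: writing $Z_i = X(h_i)$ with $\|h_i\|_{\mathfrak{H}} = 1$ and $\langle h_1, h_2\rangle_{\mathfrak{H}} = \rho$, the identity $H_n(X(h)) = I_n(h^{\otimes n})$ together with the isometry $\mathbb{E}[I_n(f) I_m(g)] = \delta_{n,m}\, n!\, \langle f, g \rangle_{\mathfrak{H}^{\otimes n}}$ for multiple Wiener--It\^o integrals would yield the statement in a single line. However, since that chaos machinery is itself typically proved using the generating-function argument sketched above, the direct route seems preferable here.
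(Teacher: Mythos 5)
Your proof is correct: the generating-function identity $e^{sx-s^2/2}=\sum_{n\ge 0}\frac{s^n}{n!}H_n(x)$ does follow from the paper's definition $H_n(x)=(-1)^n\phi^{-1}(x)\frac{d^n}{dx^n}\phi(x)$ via $\phi(x-s)/\phi(x)$, the Gaussian Laplace transform collapses the right-hand side to $e^{st\rho}$, and the interchange you flag is legitimately handled by the bound $|\mathbb{E}[H_n(Z_1)H_m(Z_2)]|\le\sqrt{n!\,m!}$ (Cauchy--Schwarz plus $\mathbb{E}[H_n(Z)^2]=n!$), which gives absolute convergence of the double series for all $(s,t)$. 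The paper does not prove this proposition itself but cites Proposition 2.2.1 of Nourdin--Peccati, whose proof is exactly this generating-function argument, so your route coincides with the one the paper relies on.
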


The next result is the well known {\it Wiener-It{\^o}}
decomposition  of $L^2(\Omega, {\mathcal F}, \mathbb{P})$ (see
e.g. \cite{noupebook} Theorem 2.2.4 for a proof). Every
random variable $F \in L^2(\Omega, {\mathcal F}, \mathbb{P})$
admits a unique expansion of the type
$$F=\mathbb{E}[F]+\sum_{q=1}^\infty F_q$$ where $F_q \in {\cal
H}_q$ and the series converges in $L^2(\Omega, {\mathcal F},
\mathbb{P})$.

We denote by $\mathfrak{H}^{\otimes q}$ and $\mathfrak{H}^{\odot q}$ the $q$-th tensor product and the $q$-th symmetric tensor product of $\mathfrak{H}$ respectively.  In particular if $\mathfrak{H}=L^2(A,\cal{A},\mu)$ then $\mathfrak{H}^{\otimes q}$ can be identified with $L^2(A^q, {\cal A}^q, \mu^q)$.  For every $1 \le p \le q$, $f \in L^2(A^p, {\cal A}^p, \mu^p)$, $g \in L^2(A^q, {\cal A}^q,\mu^q)$ and $r=1, \dots,p$, the {\it contraction} of the elements $f$ and $g$ is given by
\begin{align*}
&f \otimes_r g(a_1, \dots,a_{p+q-2r})\\
&\;\;=\int_{A^r} f(x_1, \dots,x_r,a_1, \dots, a_{p-r}) g(x_1, \dots,x_r,a_{p-r+1},\dots, a_{p+q-2r}) d \mu(x_1) \dots d \mu(x_r).
\end{align*}
For $p=q=r$ we have $f \otimes_r g=\langle f,g \rangle_{\mathfrak{H}^{\otimes r}}$ and for $r=0$ we have  $f \otimes_0 g=f \otimes g$. Denote with $f \tilde{\otimes}_r g$ the canonical symmetrization of  $f{\otimes}_r g$.

Let $\cal S$ be the set of smooth random variables of the form
$$f(X(h_1), \dots, X(h_m))$$
where $m \ge 1$, $f: \mathbb R^m \to \mathbb R$ is a $C^\infty$ function such that its partial derivatives have at most polynomial growth, and $h_1, \dots, h_m \in \mathfrak{H}$.

Let $L^2(\Omega,{\cal F},\mathbb{P};\mathfrak{H}^{ \odot q})$ be the $\mathfrak{H}^{ \odot q}$-valued random elements $Y$ that are ${\cal F}$-measurable and such that $\mathbb{E}||Y||^2_{\mathfrak{H}^{ \odot q}}<\infty$. For $F \in \mathcal {S}$ and $q \ge 1$, the {\it  $q$-th Malliavin derivative} of $F$ with respect to $X$ is the element of $L^2(\Omega,{\cal F},\mathbb{P};\mathfrak{H}^{ \odot q})$ defined by
$$D^q F=\sum_{i_1, \dots, i_q=1}^m \frac{\partial^q f}{\partial x_{i_1} \dots \partial x_{i_q}} (X(h_1), \dots, X(h_m)) h_{i_1} \otimes \dots \otimes h_{i_q}.$$
If $q=1$, we write $D$ instead of $D^1$.

Let $q \ge 1$ be an integer. We denote by $\text{Dom } \delta^q$ the subset of $L^2(\Omega,{\cal F},\mathbb{P};{\mathfrak H}^{\otimes q})$ composed of those elements $u$ such that there exists a constant $c>0$ satisfying
$$|{\mathbb E}[\langle D^q F,u  \rangle_{\mathfrak{H}^{\otimes q}}]|\le c \sqrt{\mathbb{E}[F^2]},$$
for all $F \in {\cal S}$. If $u \in \text{Dom } \delta^q$, then $\delta^q (u)$ is the unique element of   $L^2(\Omega, \cal{F}, \mathbb{P})$ characterized  by the following  integration by parts formula
$$\mathbb{E}[F \delta^q (u)]=\mathbb{E}[\langle D^q F,u  \rangle_{\mathfrak{H}^{\otimes q} }],$$
for all $F \in \cal{S}$, $\delta^q$ is the {\it divergence operator} of order $q$ . Let $q \ge 1$ and $f \in \mathfrak{H}^{\odot q}$. The $q$-th {\it multiple integral} of $f$ with respect to $X$ is defined by $I_q(f)=\delta^q(f)$. If $f \in L^2(A^q,{\cal A}^q,\mu^q)$ is symmetric, and we regard the Gaussian space generated by the paths of $W$ as an isonormal Gaussian process over $\mathfrak{H}=L^2(A,{\cal A},\mu)$, then
$$I_q(f)=\int_{A^q} f(a_1,\dots, a_q) dW(a_1) \cdots dW(a_q),$$
(see \cite{noupebook}, page 39).

We state now two fundamental properties of multiple integrals that
we shall exploit in the sequel. For a proof see again
\cite{noupebook}, Theorem 2.7.4 and Theorem 2.7.5. Let $q \ge
1$ and $f \in \mathfrak{H}^{\odot q}$, for all $r \ge 1$, we have
\begin{align} \label{DI}
D^r I_q(f)=\frac{q!}{(q-r)!} I_{q-r}(f)
\end{align}
if $r \le q$ and $D^r I_p(f)=0$ if $r>q$. For $1 \le q \le p$, $f \in \mathfrak{H}^{\odot p}$ and $g \in \mathfrak{H}^{\odot q}$ we have
\begin{align} \label{EI}
\mathbb{E}[I_p(f) I_q(g)]=p! \langle f,g\rangle_{\mathfrak{H}^{\otimes p}}
\end{align}
if $p=q$ and $\mathbb{E}[I_p(f) I_q(g)]=0$ if $p \ne q$. The linear operator $I_q$ provides an isometry from $\mathfrak{H}^{\odot q}$ onto $q$-th Wiener chaos ${\cal H}_q$ of $X$. In fact, let $f \in {\mathfrak H}$ be such that $||f||_{\mathfrak{H}}=1$, then for any integer $q \ge 1$, we have
\begin{align} \label{HI}
H_q(X(f))=I_q(f^{\otimes q}),
\end{align}
see once more \cite{noupebook}, Theorem 2.7.7. In particular,
if $f \in L^2(A,{\cal A}, \mu)$,  for $(a_1,\dots,a_q) \in A^q$,
we have
$$f^{\otimes q}(a_1,\dots,a_q)=f(a_1) \cdots f(a_q).$$

The following well-known {\it product formula} implies, in particular, that the product of two multiple integrals is indeed a finite sum of multiple integrals. In fact for $p,q \ge 1$, $f \in \mathfrak{H}^{\odot p}$ and $g \in \mathfrak{H}^{\odot q}$ we have
\begin{align} \label{P}
I_{p}(f) I_q(g)=\sum_{r=0}^{p \wedge q} r! \binom{q}{r} \binom{p}{r} I_{p+q-2 r}(f \tilde{\otimes}_r g ).
\end{align}
For a proof see \cite{noupebook}, Theorem 2.7.10.

We say that $F \in L^2(\Omega, {\cal F}, \mathbb{P})$ belongs to $\text{Dom }L$ if $\sum_{q=1}^\infty q^2 \mathbb{E}[F_q^2] < \infty$, for such an $F$ we define the {\it Ornstein-Uhlenbeck operator} $L F=-\sum_{q=1}^\infty p F_q$. The {\it pseudo-inverse} of $L$ is defined as $L^{-1}F=-\sum_{q=1}^\infty \frac{1}{q} F_q$.\\

Let $\cal{N}$ be a standard Gaussian random variable and define as usual the Kolmogorov, Total Variation and Wasserstein distance, between  $\cal{N}$ and a random variable $F$, as
\begin{align*}
d_W(F,\cal{N})&=\sup_{z \in \mathbb{R}} |\mathbb{P}(F\le z)-\mathbb{P}({\cal N}\le z)|, \\
d_{TV}(F,\cal{N})&=\sup_{B \in {\cal B}(\mathbb{R})} |\mathbb{P}(F \in B)-\mathbb{P}({\cal N} \in B) |,\\
d_{Kol}(F,\cal{N})&=\sup_{h \in Lip(1)} |\mathbb{E}[h(F)]-\mathbb{E}[h({\cal N})]|.
\end{align*}

The connection between stochastic calculus and probability metrics
is summarized in the following proposition (\cite{noupebook},
Theorem 5.1.3.). Let $\mathbb{D}^{1,2}$ be the space of Gaussian
subordinated random variables whose Malliavin derivative has
finite second moment; we have that:

\begin{proposition} \label{N&P}
Let $F \in \mathbb{D}^{1,2}$, such that $\mathbb{E}[F]=0$ and $\mathbb{E}[F^2]=\sigma^2>0$. Then
\begin{align*}
d_W(F,{\cal N}) \le \frac{\sqrt{2}}{\sigma \sqrt{\pi}} \mathbb{E} [|\sigma^2- \langle DF,-D L^{-1}F \rangle_{\mathfrak{H}}|].
\end{align*}
Assuming that $F$ has a density we have
\begin{align*}
d_{TV}(F,{\cal N}) \le \frac{2}{\sigma^2} \mathbb{E} [|\sigma^2- \langle DF,-D L^{-1}F \rangle_{\mathfrak{H}}|], \\
d_{Kol}(F,{\cal N}) \le \frac{1}{\sigma^2} \mathbb{E} [|\sigma^2- \langle DF,-D L^{-1}F \rangle_{\mathfrak{H}}|].
\end{align*}
\end{proposition}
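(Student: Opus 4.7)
The statement is a standard consequence of the Nourdin--Peccati recipe, which couples Stein's method for the Normal law with the Malliavin integration by parts formula. I would organise the argument in three steps, corresponding respectively to the probabilistic, the analytic and the combinatorial ingredients.

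\emph{Step 1 (Stein's equation).} For each admissible test function $h$ (Lipschitz with constant one for $d_W$, indicator of a Borel set for $d_{TV}$, indicator of a half-line for $d_{Kol}$) I would introduce the bounded solution $f=f_h$ of the Stein equation attached to the centred Gaussian law of variance $\sigma^2$,
\begin{equation*}
\sigma^{2} f'(x) - x f(x) = h(x) - \mathbb{E}[h(\mathcal{N})].
\end{equation*}
Classical estimates, see e.g.\ \cite{noupebook}, Section 4.3, give $\|f'_h\|_\infty \le \sqrt{2/\pi}/\sigma$ for Lipschitz $h$, $\|f'_h\|_\infty \le 2/\sigma^2$ for indicators of Borel sets, and $\|f'_h\|_\infty \le 1/\sigma^2$ for indicators of half-lines; these are precisely the prefactors appearing in the three bounds of the statement.

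\emph{Step 2 (Malliavin integration by parts).} Since $\mathbb{E}[F]=0$ and $F \in \mathbb{D}^{1,2}$, $F$ lies in the domain of $L^{-1}$ and satisfies the representation $F = L L^{-1} F = -\delta D L^{-1} F$. Using the duality between $D$ and $\delta$ recalled above, together with the chain rule $D\varphi(F) = \varphi'(F)\, DF$, I would derive the identity
\begin{equation*}
\mathbb{E}[F \varphi(F)] = \mathbb{E}\bigl[\varphi(F)\, \delta(-DL^{-1}F)\bigr] = \mathbb{E}\bigl[\varphi'(F)\, \langle DF, -DL^{-1}F \rangle_{\mathfrak{H}}\bigr],
\end{equation*}
for every sufficiently regular $\varphi$ of at most polynomial growth.

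\emph{Step 3 (Combining).} Evaluating Stein's equation at $F$, taking expectations and substituting the identity of Step 2 with $\varphi=f_h$ yields
\begin{equation*}
\mathbb{E}[h(F)] - \mathbb{E}[h(\mathcal{N})] = \mathbb{E}\!\left[f'_h(F)\bigl(\sigma^{2} - \langle DF, -DL^{-1}F \rangle_{\mathfrak{H}}\bigr)\right].
\end{equation*}
Bounding $|f'_h(F)|$ by the Stein constant of Step 1, taking absolute values inside the expectation, and finally passing to the supremum over the class of test functions defining each of the three metrics produces the three inequalities of the proposition.

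\textbf{Main obstacle.} The only genuinely delicate point is the rigorous justification of the chain rule in Step 2 when $h$ is merely an indicator function, as is the case for $d_{TV}$ and $d_{Kol}$: in that regime $f_h$ is only Lipschitz, and the passage from smooth $\varphi$ to Lipschitz $\varphi$ requires a mollification argument. This is exactly where the hypothesis that $F$ admits a density enters the Total Variation and Kolmogorov bounds; for the Wasserstein estimate, $f_h$ is $C^1$ and the chain rule applies directly. All the remaining manipulations are routine applications of the machinery developed in the present section.
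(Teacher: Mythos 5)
Your proposal is correct and is precisely the standard Stein--Malliavin argument from the cited source (Nourdin and Peccati, \emph{Normal Approximations Using Malliavin Calculus}, Theorem 5.1.3); the paper itself offers no proof of this proposition but simply quotes that reference. Your three steps --- the Stein equation with the classical bounds $\sqrt{2/\pi}/\sigma$, $2/\sigma^2$, $1/\sigma^2$ on $\|f_h'\|_\infty$, the integration-by-parts identity $\mathbb{E}[F\varphi(F)]=\mathbb{E}[\varphi'(F)\langle DF,-DL^{-1}F\rangle_{\mathfrak H}]$ via $F=-\delta DL^{-1}F$, and the role of the density hypothesis in justifying the chain rule for the merely Lipschitz solutions arising in the total variation and Kolmogorov cases --- match that proof exactly.
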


\section{Needlets Random Fields and Wiener Chaoses}

As motivated earlier, in this paper we shall focus on sequences of
needlet random fields, defined by a sequence of spherical random
fields which can be viewed as averaged forms of the spherical
eigenfunctions, e.g. they take the form
$$\beta_j(x)=\sum_{\ell=2^{j-1}}^{2^{j+1}} b(\frac{\ell}{2^j}) f_\ell(x), \hspace{1cm} \tilde{\beta}_j(x):=\frac{\beta_j(x)}{\sqrt{\mathbb{E}[ \beta^2_j(x)] }}, \hspace{1cm}j=1,2,3 \dots$$
for a weight function $b(\cdot)$ such that $b(\cdot)$ is smooth ($b(\cdot) \in C^{\infty}$) compactly supported in $[\frac 1 2,2]$,
and satisfies the partition of unity property $\sum_j
b^2(\frac{\ell}{2^j})=1$, for all $\ell>2$, see also \cite{marinuccivadlamani}. To investigate the correlation, we
introduce some mild regularity conditions on the power spectrum
$C_\ell$  (see \cite{marpecbook}, page 257).

\begin{condition}  \label{con}
There exists $M \in \mathbb{N}$, $\alpha>2$ and a sequence of functions $\{g_j(\cdot)\}$ such that for $2^{j-1}<\ell<2^{j+1}$
$$C_\ell=\ell^{-\alpha} g_j(\frac{\ell}{2^j})>0$$
where $c_0^{-1} \le g_j \le c_0$ for all $j \in \mathbb{N}$  and for some $c_1, \dots, c_M>0$ and  $r=1, \dots, m$, we have $$\sup_j \sup_{2^{-1} \le u \le 2} |\frac{d^r}{du^r} g_j(u)|\le c_r.$$
\end{condition}
Condition \ref{con} entails a weak smoothness requirement on the behaviour of the angular power spectrum, wich is satisfied by cosmologically relevant models. This condition is fulfilled for instance by  models of the form
$$C_\ell=\ell^{-\alpha} G(\ell),$$
where $G(\ell)={P(\ell)}/{Q(\ell)}$ and $P(\ell), Q(\ell)>0$ are two positive polynomials of the same order, let $G:=\lim_{\ell \to \infty} G(\ell)$.

The following property is well-known and gives an upper bound on the correlation coefficient of $\{\beta_j(\cdot)\}$, for a proof see \cite{marpecbook}, Lemma 10.8. 
\begin{proposition} \label{con1}
Under Conditions \ref{con}, for all $M \in \mathbb{N}$, there exists a constant  $C_M>0$ such that the following inequality holds
\begin{align} \label{corrineq}
|\text{Corr }[\beta_j(x) \beta_j(y)]| \le \frac{C_M}{(1+2^j d( x , y  ))^M},
\end{align}
where $d(x,y)=\arccos(\langle x,y \rangle)$ is the geodesic distance on the sphere.
\end{proposition}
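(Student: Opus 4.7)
By isotropy of $f$, the covariance depends only on $\theta := d(x,y)$, and the addition formula for spherical harmonics gives
\[
\text{Cov}[\beta_j(x), \beta_j(y)] = \sum_{\ell=2^{j-1}}^{2^{j+1}} b^2\!\left(\frac{\ell}{2^j}\right) C_\ell\, \frac{2\ell+1}{4\pi}\, P_\ell(\cos\theta).
\]
The plan is to absorb the $j$-dependence into a smooth profile function on $[1/2,2]$ and then appeal to a standard kernel-localization estimate for Legendre sums. Using Condition \ref{con} to substitute $C_\ell = \ell^{-\alpha} g_j(\ell/2^j)$ and factoring the scale, I set
\[
\phi_j(u) := b^2(u)\, g_j(u)\, u^{-\alpha},
\]
so that
\[
\text{Cov}[\beta_j(x), \beta_j(y)] = 2^{-j\alpha}\sum_{\ell} \phi_j\!\left(\frac{\ell}{2^j}\right) \frac{2\ell+1}{4\pi}\, P_\ell(\cos\theta).
\]
Since $b^2$ and $u \mapsto u^{-\alpha}$ are smooth on $[1/2,2]$, and Condition \ref{con} provides uniform $C^M$ control on $g_j$, the function $\phi_j$ is $C^M$, supported in $[1/2,2]$, with $\|\phi_j\|_{C^M}$ bounded independently of $j$.

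The decisive analytic step is then the kernel-localization estimate: for any $\phi\in C^M$ supported in $[1/2,2]$ and any integer $N\ge 1$,
\[
\left|\sum_{\ell}\phi(\ell/N)\,\frac{2\ell+1}{4\pi}\,P_\ell(\cos\theta)\right|\le\frac{C_M\,N^2}{(1+N\theta)^M},
\]
with $C_M$ depending only on $M$ and $\|\phi\|_{C^M}$. I would derive this by iterated Abel summation exploiting the Legendre identity $(2\ell+1)P_\ell(t) = P'_{\ell+1}(t) - P'_{\ell-1}(t)$, which on each application exchanges a factor of $N$ for a factor of $\theta$ through the smoothness of $\phi$; an equivalent route combines Hilb's asymptotic $P_\ell(\cos\theta)\sim\sqrt{\theta/\sin\theta}\,J_0((\ell+\tfrac12)\theta)$ with repeated integration by parts, transferring oscillation onto derivatives of $\phi$. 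Applying this to $\phi_j$ with $N=2^j$ yields
\[
\bigl|\text{Cov}[\beta_j(x),\beta_j(y)]\bigr|\le\frac{C_M\,2^{j(2-\alpha)}}{(1+2^j\theta)^M}.
\]

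For the denominator I use $P_\ell(1)=1$ at $\theta=0$ together with the two-sided bound $c_0^{-1}\le g_j\le c_0$ of Condition \ref{con} to obtain, by the same rescaling,
\[
\text{Var}[\beta_j(x)] = 2^{-j\alpha}\sum_{\ell} \phi_j\!\left(\frac{\ell}{2^j}\right)\frac{2\ell+1}{4\pi}\asymp 2^{j(2-\alpha)},
\]
with constants independent of $j$ and $x$. Dividing the covariance bound by the variance, the scale-dependent factor $2^{j(2-\alpha)}$ cancels and the claimed inequality (\ref{corrineq}) follows. The main technical obstacle is the kernel-localization estimate, which concentrates all the harmonic-analysis difficulty in the behaviour of Legendre polynomials; once it is in hand, Condition \ref{con} is exactly what one needs to ensure that the rescaled profile $\phi_j$ has bounded $C^M$ norm uniformly in $j$, so the remainder of the argument is a clean rescaling.
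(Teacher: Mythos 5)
Your outline is correct and is essentially the standard argument: the paper itself gives no proof of Proposition \ref{con1}, deferring to Lemma 10.8 of \cite{marpecbook}, and the proof there proceeds exactly as you describe — absorb $C_\ell=\ell^{-\alpha}g_j(\ell/2^j)$ into a profile $\phi_j(u)=b^2(u)g_j(u)u^{-\alpha}$ with $C^M$ norm bounded uniformly in $j$, invoke the needlet-kernel localization estimate (the inequality already quoted in the paper's introduction from \cite{npw1}), and divide by the variance, whose exact order $2^{j(2-\alpha)}$ cancels the scale factor. The only caveat is that the kernel-localization estimate is the entire analytic content of the proposition, and your two-line sketch of it (iterated Abel summation via $(2\ell+1)P_\ell=P'_{\ell+1}-P'_{\ell-1}$, trading a factor of $N$ for a factor of $\theta$ at each step) glosses over the genuinely delicate part, namely uniform control of the resulting Legendre partial-sum kernels; since the paper treats this estimate as a known black box, your reliance on it is consistent with the paper's own level of detail, but a self-contained proof would need to carry out that summation-by-parts argument (or the Hilb-asymptotics route) in full. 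One further minor point: Condition \ref{con} controls only $M$ derivatives of $g_j$ for a fixed $M$, so the decay order obtainable is tied to that $M$; this mismatch with the ``for all $M$'' phrasing is inherited from the paper's statement and is not a defect of your argument.
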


Since $\{f_\ell(x)\}$ is Gaussian for each $x \in S^2$, then  $\tilde{\beta}_j(x)$ is a standard Gaussian random
variable and $\beta_j(x)$ is centred 
with variance
\begin{align*}
\mathbb{E}[\beta^2_j(x)]&=\sum_{\ell} b^2(\frac{\ell}{2^j})  C_\ell  \frac{2\ell+1}{4 \pi} P_\ell(\langle x,x \rangle)=\sum_{\ell} b^2(\frac{\ell}{2^j})  C_\ell  \frac{2\ell+1}{4 \pi},
\end{align*}
with $c_1 2^{j(2-\alpha)} \le \mathbb{E}[\beta^2_j(x)] \le c_2 2^{j(2-\alpha)}$. From Proposition \ref{con1}, for the covariance function we have
\begin{align} \label{mu1}
\mathbb{E}[\beta_j(x) \beta_{j}(y)]&=\sum_{\ell} b^2(\frac{\ell}{2^j})  C_\ell  \frac{2\ell+1}{4 \pi} P_\ell(\langle x,y \rangle) \le \frac{C_M}{(1+2^j d(x,y))^M}  \sum_{\ell} b^2(\frac{\ell}{2^j})  C_\ell  \frac{2\ell+1}{4 \pi}.
\end{align}
As in \cite{marinuccivadlamani}, we exploit here the fact that the
field $\{ {\tilde \beta}_j(\cdot)\}$ can be expressed as an
isonormal Gaussian process. Let
$$B_j=\sum_{\ell=2^{j-1}}^{2^{j+1}} b^2(\frac{\ell}{2^j})C_\ell
\frac{2\ell+1}{4 \pi}$$ and for all $x \in S^2$ let us define
\begin{align} \label{Psi}
{\tilde \Theta}_j(\langle x, \cdot \rangle):=\frac{1}{\sqrt{B_j}} \sum_{\ell} b(\frac{\ell}{2^j}) \sqrt{C_\ell} \frac{2 \ell+1}{4 \pi} P_\ell(\langle x, \cdot \rangle)=:\frac{1}{\sqrt{B_j}} \Theta_j(\langle x, \cdot \rangle).
\end{align}
We have that ${\tilde \Theta}_j(\langle x, \cdot \rangle)$ is in the Hilbert space $\mathfrak{H}= L^2(S^2, d\sigma(y))$ and we can represent $\{\tilde{\beta}_j(\cdot)\}$ as
$${\tilde \beta}_j(x)=\int_{S^2} {\tilde \Theta}_j(\langle x,y \rangle) W(d \sigma(y)), \hspace{1cm} x \in S^2,$$
where $W$ is  Gaussian white noise on the sphere as in formula (\ref{W}). In fact the covariance function is given by
\begin{align} \label{Theta}
\tilde{\rho}_j(\langle x,y \rangle)&:=\mathbb{E}[{\tilde \beta}_j(x) {\tilde \beta}_j(y)]=\langle  {\tilde \Theta}_j(\langle x,z \rangle)  {\tilde \Theta}_j(\langle z ,y \rangle) \rangle_\mathfrak{H}=\int_{S^2} {\tilde \Theta}_j(\langle x,z \rangle)  {\tilde \Theta}_j(\langle z ,y \rangle) d \sigma(z) \nonumber \\
&=\frac{1}{{B_j}} \sum_\ell b^2(\frac{\ell}{2^j}) C_\ell \frac{2\ell+1}{4 \pi} P_\ell(\langle x,y \rangle)=:\frac{1}{B_j} \rho_j(\langle x ,y \rangle).
\end{align}
It follows immediately that that the transformed process
$\{H_q({\tilde \beta}_j(\cdot))\}$ belongs to the $q$-th order
Wiener chaos generated by the Gaussian measure governing $f_\ell$ and
so does any linear transform including $$\nu_{j;q}=\int_{S^2}
H_q(\tilde{\beta}_j(x)) d \sigma(x).$$
Let $\ind_{\{ \cdot\}}$ be the usual the indicator function, clearly $\ind_{\{ \tilde{\beta}_j(x) \le z \}}$ belongs for each $x$ and $z \in S^2$ to the $L^2$ space of square integrable functions of Gaussian random variables and we can write
$$\ind_{\{ \tilde{\beta}_j(x) \le z\}}=\sum_{q=0}^\infty \frac{{\cal J}_q(z)}{q!} H_q(\tilde{\beta}_j(x))$$
where the right hand side converges in the $L^2$ sense i.e.
$$\lim_{N \to \infty} \mathbb{E} \left[ \sum_{q=N}^\infty \frac{{\cal J}_q(z)}{q!} H_q({\tilde \beta}_j(x)) \right]^2=0$$ uniformly w.r.t. $x$, $z$. It is possible to provide analytic expressions of the coefficients $\{{\cal J}_q(\cdot)\}$, indeed for $q \ge 1$
$${\cal J}_q(z)=\int_{\mathbb{R}} \ind_{(u \le z)} H_q(u) \phi(u) du=-H_{q-1}(z) \phi(z)$$
and ${\cal J}_0(z)=\Phi(z)$ where $\phi$, $\Phi$ denote, respectively, the density and the cumulative
distribution function of the standard Gaussian (see
\cite{MaWi,MaWi3}). Let us define the empirical measure
$\Phi_j(z)$ as follows: for all $z \in (-\infty,\infty)$ we have
$$\Phi_j(z):=\int_{S^2} \ind_{\{ \tilde{\beta}_j(x) \le z\}} d \sigma(x).$$
The function $\Phi_j(z)$ provides the (random) measure of the set where $\tilde{\beta}_j$ lies below the value $z$. The value $\Phi_j(z)$ at $z=0$ is related to the so-colled defect (or `signed area') of the function $\tilde{\beta}_j: S^2 \to \mathbb{R}$, which is defined by
$${\cal D}_j:=\text{meas} ({\tilde \beta}_j^{-1}(0,\infty))-\text{meas} (\tilde{\beta}_j^{-1}(-\infty,0))$$
and is hence the difference between the areas of positive and negative inverse image of $\tilde{\beta}_j$. By a straightforward transformation we have ${\cal D}_j=4 \pi-2 \Phi_j(0)$. Instead $4 \pi-  \Phi_j(z)$ provides the area of the excursion set $\{x: \tilde{\beta}_j(x)>z\}$. 


\section{On the variance of $\nu_{j;q}$} \label{variance}

In this section we obtain, for all fixed $q \ge 2$, the explicit
value for the limit of $2^{2j} \text{Var }[\nu_{j;q}]$ as $j \to
\infty$.

\begin{theorem} \label{2}
For $q >4$,  we have
$$\lim_{j \to \infty}2^{2j}{\rm Var}[\nu_{j;q}]= q! c_q$$
where $$c_q= \frac{8 \pi^2 }{ \left( \int_{\frac 1 2}^{2} b^2(x) x^{1-\alpha} d x \right)^q} \int_{\frac 1 2}^2 \cdots  \int_{\frac 1 2}^2  \int_0^\infty  \prod_{k=1}^q b^2(x_k) x_k^{1-\alpha}  J_0(x_k \psi) \psi d \psi d x_1 \cdots d x_q.$$
\end{theorem}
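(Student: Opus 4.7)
The plan is to reduce $\mathrm{Var}[\nu_{j;q}]$ to a single integral in the geodesic distance, rescale by the natural length $2^{-j}$, identify the pointwise limit of the rescaled correlation via Hilb's asymptotics, and then pass to the limit by dominated convergence; the assumption $q>4$ enters precisely to secure integrability of the dominating function.

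First, by Proposition \ref{j-g} together with (\ref{HI}) and the representation (\ref{Theta}),
$$\mathrm{Var}[\nu_{j;q}] \;=\; q!\int_{S^2}\int_{S^2}\tilde\rho_j(\langle x,y\rangle)^q \, d\sigma(x)d\sigma(y)\;=\;8\pi^2 q!\int_0^\pi \tilde\rho_j(\cos\theta)^q\sin\theta\,d\theta,$$
where the second equality uses isotropy to fix one argument to the north pole. The substitution $\psi = 2^j\theta$ then gives
$$2^{2j}\mathrm{Var}[\nu_{j;q}] \;=\; 8\pi^2 q! \int_0^{\pi 2^j}\tilde\rho_j\bigl(\cos(\psi/2^j)\bigr)^q\cdot 2^j\sin(\psi/2^j)\,d\psi,$$
and since $2^j\sin(\psi/2^j)\to\psi$, it suffices to identify the limit of the rescaled correlation and justify passage inside an integral over $[0,\infty)$.

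Second, to identify the pointwise limit of $\tilde\rho_j(\cos(\psi/2^j))$, I insert the series defining $\tilde\rho_j$, apply Hilb's asymptotic $P_\ell(\cos\theta)=(\theta/\sin\theta)^{1/2}J_0((\ell+\tfrac12)\theta)+\text{remainder}$ termwise, and recognize the resulting sum in $\ell$ (weighted by $b^2(\ell/2^j)(\ell/2^j)^{1-\alpha}$ up to constants) as a Riemann sum of step $1/2^j$ for an integral over $x=\ell/2^j\in[1/2,2]$, using Condition \ref{con}. The parallel computation at $\psi=0$ (where $J_0(0)=1$) handles the normalization $B_j$, and together this yields
$$\tilde\rho_j\bigl(\cos(\psi/2^j)\bigr) \,\longrightarrow\, \frac{\int_{1/2}^{2} b^2(x) x^{1-\alpha} J_0(x\psi) \, dx}{\int_{1/2}^{2} b^2(x) x^{1-\alpha} \, dx}$$
uniformly on compacta of $\psi$. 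Raising to the $q$-th power, expanding as a $q$-fold product, and using Fubini to interchange the $x_1,\ldots,x_q$ and $\psi$ integrations reproduces $q!c_q$ as stated.

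The main obstacle is justifying the exchange of the limit with the $\psi$-integral over the full range $[0,\pi 2^j]$. A $j$-uniform dominating function has to combine the localization bound (\ref{corrineq}), which controls the region $\psi\asymp 2^j$ where Hilb's formula loses effectiveness, with the oscillatory decay $|J_0(t)|=O(t^{-1/2})$ in the intermediate regime $1\ll\psi\ll 2^j$. Formal substitution of this decay yields an integrand bound of order $\psi^{1-q/2}$ at infinity, which is integrable precisely when $q>4$; this structural fact is what forces the separate treatment of the cases $q\in\{2,3,4\}$ in Theorem \ref{00}. Turning these heuristics into rigorous uniform-in-$j$ bounds, with careful control of Hilb's remainder term and of the Riemann-sum error near the endpoints of $\mathrm{supp}(b)$, will be the technically demanding part of the argument.
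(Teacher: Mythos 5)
Your argument is correct in outline and reaches the stated formula, but it is organized differently from the paper's proof, and the difference is worth making explicit. The paper first expands $\tilde\rho_j^{\,q}$ into the $q$-fold sum over multipoles $(\ell_1,\dots,\ell_q)$, rewrites that sum as an integral over $(x_1,\dots,x_q)\in[\tfrac12,2]^q$, and applies dominated convergence in the $x$-variables; its key technical input is Lemma \ref{lemma var}, which controls $\ell^2\int_0^{\pi/2}P_{\ell_1}(\cos\theta)\cdots P_{\ell_q}(\cos\theta)\sin\theta\,d\theta$ uniformly, and this is where $q>4$ is genuinely indispensable, because a product of Legendre polynomials of \emph{distinct} degrees can only be bounded factor by factor, yielding the $\psi^{1-q/2}$ tail. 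You instead keep the correlation function whole, identify the scaling limit of the single function $\tilde\rho_j(\cos(\psi/2^j))$ via Hilb's asymptotics and a Riemann-sum argument (essentially the content of Lemma \ref{4} with $J_0(x\psi)$ inserted), and only expand the $q$-th power at the very end. Both routes are legitimate and share the same ingredients (Proposition \ref{j-g}, Lemma \ref{hilb}, the rescaling $\psi=2^j\theta$); yours is arguably more economical for large $q$, while the paper's multipole expansion is what later permits the explicit evaluation of $c_2,c_3,c_4$ through Wigner coefficients.

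One correction to your own diagnosis, however: in your organization the hypothesis $q>4$ is \emph{not} what secures integrability of the dominating function. Inequality (\ref{corrineq}) holds for every $M$, uniformly in $j$ and over the whole range $\theta\in[0,\pi]$, so $|\tilde\rho_j(\cos(\psi/2^j))|^q\,2^j\sin(\psi/2^j)\le C_M^q(1+\psi)^{-Mq}\,\psi$ is already an integrable, $j$-uniform dominant for every $q\ge 1$ once $M\ge 3$; there is no intermediate regime where you must fall back on $|J_0(t)|=O(t^{-1/2})$, and the $\psi^{1-q/2}$ bound you invoke is strictly weaker than what the localization inequality gives for the full weighted sum $\tilde\rho_j$. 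Where $q>4$ actually enters your argument is the final Fubini step, converting $\int_0^\infty\bigl(\int_{1/2}^2 b^2(x)x^{1-\alpha}J_0(x\psi)\,dx\bigr)^q\psi\,d\psi$ into the iterated form of $c_q$ with the $\psi$-integration innermost: absolute convergence of $\int_0^\infty\prod_k|J_0(x_k\psi)|\,\psi\,d\psi$ requires $q>4$. This matters for the write-up, since your scheme in fact proves convergence of $2^{2j}\mathrm{Var}[\nu_{j;q}]$ for every $q\ge2$ to the non-iterated expression; it is only the stated representation of the limit, not its existence, that needs $q>4$.
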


\begin{remark}
It is obvious that $c_q \ge 0$ for all $q>0$. In the sequel, se  shall assume that the inequality is strict when needed, e.g., in Theorem \ref{ccc}.  
\end{remark}
\noindent Our proof is close to the argument by \cite{MaWi2}; in particular let us start by recalling the following fact on the asymptotic behaviour of Legendre polynomials (see for instance \cite{szego}, \cite{Wig1}, \cite{Wig2}).

\begin{lemma}[Hilb's asymptotics] \label{hilb}
For any $\varepsilon>0$, $C>0$ we have
$$P_{\ell_k}(\cos \theta)=\left(\frac{\theta}{\sin \theta}  \right)^{\frac 1 2} J_0((\ell_k+1/2) \theta)+\delta_k(\theta)$$
where
$$\delta_k(\theta) \ll \begin{cases}  \theta^2 & 0<\theta<1/\ell_k \\ \theta^{1/2} \ell_k^{-3/2}  & \theta > 1/\ell_k \end{cases}$$
uniformly w.r.t. $\ell_k \ge 1$ and $\theta \in [0,\pi-\varepsilon]$.
\end{lemma}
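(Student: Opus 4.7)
The plan is to derive Hilb's formula by transforming Legendre's equation into a perturbation of Bessel's equation of order zero, and then invoking a Liouville--Green (WKB) comparison to read off both the principal term and the stated error bounds.

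First I would recall Legendre's equation for $P_{\ell}(\cos\theta)$,
$$\frac{d^{2}P_{\ell}}{d\theta^{2}}+\cot\theta\,\frac{dP_{\ell}}{d\theta}+\ell(\ell+1)\,P_{\ell}=0,$$
and pass to the Liouville normal form via the substitution $u(\theta):=(\sin\theta)^{1/2}P_{\ell}(\cos\theta)$, which eliminates the first-order term and yields
$$u''(\theta)+\Bigl[(\ell+\tfrac{1}{2})^{2}+\tfrac{1}{4\sin^{2}\theta}\Bigr]u(\theta)=0.$$
On the other hand, $v(x):=x^{1/2}J_{0}(x)$ satisfies $v''(x)+\bigl[1+\tfrac{1}{4x^{2}}\bigr]v(x)=0$. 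After the rescaling $x=(\ell+\tfrac{1}{2})\theta$ the two equations differ only by the ``error potential'' $\tfrac{1}{4}\bigl[(\ell+\tfrac{1}{2})^{-2}\sin^{-2}\theta-\theta^{-2}\bigr]$, which is $O(\ell^{-2})$ uniformly on compact subsets of $(0,\pi)$.

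Next I would cast the Legendre solution in a Volterra integral representation against the Bessel comparison equation (variation of parameters using the two independent solutions $J_{0}$ and $Y_{0}$ of the rescaled Bessel equation). The normalization of the principal term is pinned down by the boundary condition $P_{\ell}(1)=1=J_{0}(0)$, which singles out $(\theta/\sin\theta)^{1/2}J_{0}((\ell+\tfrac{1}{2})\theta)$ as the leading part, with $\delta_{k}(\theta)$ then equal to the iterated Volterra integral of the error potential. A Gronwall-type estimate splits naturally into the two regimes of the statement: for $0<\theta<1/\ell_{k}$ the Taylor expansions $J_{0}(x)=1-x^{2}/4+O(x^{4})$, $(\theta/\sin\theta)^{1/2}=1+\theta^{2}/12+O(\theta^{4})$, and $P_{\ell_{k}}(\cos\theta)=1-\ell_{k}(\ell_{k}+1)\sin^{2}(\theta/2)+O(\ell_{k}^{4}\theta^{4})$ conspire so that the $\ell_{k}^{2}\theta^{2}$ terms cancel, leaving $\delta_{k}(\theta)\ll\theta^{2}$ uniformly; for $\theta>1/\ell_{k}$, the oscillatory asymptotic $J_{0}(x)\sim\sqrt{2/(\pi x)}\cos(x-\pi/4)$ combined with the $O(\ell_{k}^{-2})$ size of the perturbation yields $\delta_{k}(\theta)=O(\theta^{1/2}\ell_{k}^{-3/2})$ upon integration against the Green's function.

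The main technical obstacle, I expect, is securing uniformity in both $\ell_{k}$ and $\theta\in[0,\pi-\varepsilon]$ with clean absolute constants: the potential $1/(4\sin^{2}\theta)$ is singular at $\theta=\pi$ as well, so the Bessel comparison anchored at the origin ceases to be appropriate near $\pi$, which is exactly why the statement excludes a neighbourhood of that endpoint (one would otherwise need a separate Bessel comparison anchored at $\theta=\pi$). Rather than redo the Liouville--Green calculation in full, I would simply invoke Szeg\H{o}'s classical treatment in his \emph{Orthogonal Polynomials} monograph \cite{szego}, which delivers precisely the two bounds above and is the route advertised by the authors in the remark preceding the lemma.
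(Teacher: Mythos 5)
The paper offers no proof of this lemma: it is recalled as a classical fact, with references to Szeg\H{o}'s monograph \cite{szego} and to \cite{Wig1}, \cite{Wig2}, so there is no in-paper argument to compare yours against line by line. Your sketch is nonetheless the standard derivation --- passing to the Liouville normal form $u=(\sin\theta)^{1/2}P_{\ell}(\cos\theta)$, which satisfies $u''+[(\ell+\tfrac12)^2+\tfrac{1}{4\sin^2\theta}]u=0$, comparing with the normal form of Bessel's equation for $x^{1/2}J_0(x)$, and controlling the discrepancy by a Volterra/Gronwall argument --- and this is essentially the proof of Hilb's formula in \S 8.21 of \cite{szego}, which is exactly what the authors implicitly invoke; so the route is the right one and, modulo the final appeal to Szeg\H{o}, the proposal is acceptable. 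Two small imprecisions are worth flagging. First, in the rescaled variable $x=(\ell+\tfrac12)\theta$ the error potential is $\tfrac14(\ell+\tfrac12)^{-2}\bigl[\sin^{-2}\theta-\theta^{-2}\bigr]$, i.e.\ the factor $(\ell+\tfrac12)^{-2}$ multiplies the whole bracket; since $\sin^{-2}\theta-\theta^{-2}\to 1/3$ as $\theta\to 0$, the perturbation is $O(\ell^{-2})$ uniformly on all of $[0,\pi-\varepsilon]$, not merely on compact subsets of $(0,\pi)$ --- this uniformity down to $\theta=0$ is precisely what the lemma needs. Second, in the regime $0<\theta<1/\ell_k$ the Taylor cancellation you describe only disposes of the $\ell_k^2\theta^2$ terms; the quartic remainders are a priori of size $\ell_k^4\theta^4\le(\ell_k\theta)^2\,\ell_k^2\theta^2$, which is not $O(\theta^2)$ by inspection, so the uniform bound $\delta_k(\theta)\ll\theta^2$ genuinely requires the Volterra/Green's-function estimate (or the citation) rather than term-by-term expansion.
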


\begin{lemma} \label{lemma var} Let $q > 4$. For $\ell=2^j$, $\ell_k \in [2^{j-1},2^{j+1}]$ where $k=1,\dots,q$, as $j \to \infty$, we have
\begin{align} \label{PlJ0}
&\ell^2 \int_{0}^{\frac \pi 2}  P_{\ell_1}(\cos \theta) \cdots P_{\ell_q} (\cos \theta)   \sin \theta d \theta=   \int_{0}^{\frac {\ell \pi} 2}  \prod_{k=1}^q J_0\left( \frac{\ell_k+1/2}{\ell}  \psi \right)  \psi d \psi+O(\frac{1}{\sqrt{\ell}}).
\end{align}
\end{lemma}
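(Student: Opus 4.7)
The plan is to substitute Hilb's asymptotic expansion (Lemma \ref{hilb}) into each Legendre factor, perform the change of variable $\psi = \ell\theta$, and then control the resulting error terms. Writing $P_{\ell_k}(\cos\theta) = A_k(\theta) + \delta_k(\theta)$ with $A_k(\theta) := (\theta/\sin\theta)^{1/2} J_0((\ell_k+1/2)\theta)$, I would expand the product by telescoping:
$$\prod_{k=1}^q P_{\ell_k}(\cos\theta) = \prod_{k=1}^q A_k(\theta) + \sum_{k=1}^q \Bigl(\prod_{i<k} A_i(\theta)\Bigr) \delta_k(\theta) \Bigl(\prod_{i>k} P_{\ell_i}(\cos\theta)\Bigr).$$
The claim is that the main term $\ell^2 \int_0^{\pi/2} \prod_k A_k(\theta) \sin\theta\, d\theta$ matches the right-hand side of (\ref{PlJ0}) up to an $O(\ell^{-1/2})$ error, while each term in the remainder sum is of strictly smaller order.

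Collecting the $(\theta/\sin\theta)^{q/2}$ factor from the $A_k$'s with the Jacobian $\sin\theta\, d\theta = \sin(\psi/\ell)\, d\psi/\ell$, the substitution $\psi=\ell\theta$ yields
$$\ell^2 \int_0^{\pi/2} \prod_{k=1}^q A_k(\theta) \sin\theta\, d\theta = \int_0^{\ell\pi/2} \psi \Bigl(\tfrac{\psi/\ell}{\sin(\psi/\ell)}\Bigr)^{q/2-1} \prod_{k=1}^q J_0\!\Bigl(\tfrac{\ell_k+1/2}{\ell}\psi\Bigr) d\psi.$$
Since $(\psi/\ell)/\sin(\psi/\ell) - 1 = O((\psi/\ell)^2)$ uniformly on $(0,\pi/2]$, the integral on the right differs from $\int_0^{\ell\pi/2} \psi \prod_k J_0(\cdots) d\psi$ by at most $C\ell^{-2} \int_0^{\ell\pi/2} \psi^3 \prod_k |J_0(\cdots)| d\psi$. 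Combining $|J_0(x)| \le C\min(1,x^{-1/2})$ with $(\ell_k+1/2)/\ell \asymp 1$, the large-$\psi$ portion is bounded by $\int_1^{\ell\pi/2}\psi^{3-q/2}\,d\psi$, which is $O(\ell^{3/2})$ in the critical case $q=5$ and smaller for $q \ge 6$. Multiplying by $\ell^{-2}$ gives the claimed error $O(\ell^{-1/2})$.

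For each remainder term, I would bound every factor $A_i$ or $P_{\ell_i}$ by $C\min(1,(\ell\theta)^{-1/2})$ and use $|\delta_k(\theta)| \ll \min(\theta^2, \theta^{1/2}\ell^{-3/2})$ from Lemma \ref{hilb}. Splitting $[0,\pi/2]$ at $\theta = 1/\ell$, the inner region contributes at most $\ell^2 \int_0^{1/\ell} \theta^2\cdot\theta\,d\theta = O(\ell^{-2})$, and the outer region at most $\ell^{1-q/2} \int_{1/\ell}^{\pi/2} \theta^{2-q/2}\, d\theta$, which evaluates to $O(\ell^{-3/2})$ at $q=5$, $O(\ell^{-2}\log\ell)$ at $q=6$, and $O(\ell^{-2})$ for $q\ge 7$. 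Terms containing two or more $\delta_k$'s are bounded identically and are strictly smaller.

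The main obstacle is the sharp exponent bookkeeping: the $O(\ell^{-1/2})$ bound is tight precisely at $q=5$, where the slow $\psi^{-5/2}$ decay of $\prod_k J_0$ just barely makes $\int_0^{\ell\pi/2}\psi^3 \prod_k |J_0|\,d\psi = O(\ell^{3/2})$ balance the $\ell^{-2}$ prefactor coming from the Taylor approximation $\sin(\psi/\ell)\approx \psi/\ell$ in the main term. No new ideas beyond Hilb's asymptotics and standard Bessel decay are required, but one must simultaneously control the main-term approximation and all $q$ remainder terms uniformly in $q>4$.
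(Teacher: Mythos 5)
Your proposal is correct and follows essentially the same route as the paper: Hilb's asymptotics for each Legendre factor, the substitution $\psi=\ell\theta$ with the Taylor bound $(\psi/\ell)/\sin(\psi/\ell)=1+O(\psi^2/\ell^2)$ for the main term, and the decay $|J_0(x)|\le Cx^{-1/2}$ together with the two-regime bound on $\delta_k$ for the error terms; your telescoping of the product is just a tidier bookkeeping of the same cross terms the paper expands binomially into the quantities $A_{q-k,k}$. Your explicit identification of $q=5$ as the case where the $O(\ell^{-1/2})$ bound is saturated matches the paper's $O(\ell^{-q/2})$ error estimate.
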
 \label{lll}
\begin{proof} From Lemma \ref{hilb} we have
 \begin{align*}
& \int_{0}^{\frac \pi 2}  P_{\ell_1}(\cos \theta) \cdots P_{\ell_q} (\cos \theta)   \sin \theta d \theta \\
&= \int_{0}^{\frac \pi 2}  \prod_{k=1}^q  \left[ \left(\frac{\theta}{\sin \theta}\right)^\frac{1}{2} J_0((\ell_k+1/2) \theta) + \delta_k(\theta) \right]   \sin \theta d \theta\\
 &=\int_{0}^{\frac \pi 2}  \left[ \prod_{k=1}^q  \delta_k(\theta) +  \sum_{k=1}^q \left(\frac{\theta}{\sin \theta}\right)^\frac{1}{2} J_0((\ell_k+1/2) \theta)  \prod_{k' \ne k } \delta_{k'}(\theta) + \cdots + \left(\frac{\theta}{\sin \theta}  \right)^{\frac{q}{2}}  \prod_{k=1}^q  J_0((\ell_k+1/2) \theta)  \right]  \sin \theta d \theta.
\end{align*}
Let, for $k=1, \dots, q$,
\begin{align*}
A_{q-k,k}&:=  \int_{0}^{\frac \pi 2}   \left(\frac{\theta}{\sin \theta}  \right)^{\frac k 2}  \prod_{m=1}^k  J_0((\ell_m+1/2) \theta)   \prod_{m'=k+1}^q \delta_{m'}(\theta) \sin \theta d \theta, \\
A_{q,0}&:=  \int_{0}^{\frac \pi 2}      \prod_{m'=1}^q \delta_{m'}(\theta) \sin \theta d \theta.
\end{align*}

\begin{itemize}
\item For $k=q$,  with the change of variable $\psi=\ell \theta$, we have
\end{itemize}
\begin{align*}
 A_{0,q}&=\frac{1}{\ell} \int_{0}^{\frac {\ell \pi} 2}  \left(\frac{{\psi}/{\ell}}{\sin({\psi}/{\ell})}  \right)^{\frac q 2}   \prod_{m=1}^q  J_0\left( \frac{\ell_m+1/2}{\ell}  \psi \right)    \sin({\psi}/{\ell}) d \psi \\
 &= \frac{1}{\ell^{2 }}   \int_{0}^{\frac {\ell \pi} 2} \left(\frac{{\psi}/{\ell}}{\sin({\psi}/{\ell})}  \right)^{\frac{q}{2}-1}  \prod_{m=1}^q   J_0\left( \frac{\ell_m+1/2}{l}  \psi \right)  \psi d \psi.
\end{align*}
For $\psi \in [0, \frac{\ell \pi}{2}]$,  we write $\left(\frac{\psi/\ell}{\sin( \psi/\ell)}  \right)^{\frac{q}2 -1 }=1+O( \frac{\psi^2}{\ell^2})$, that is
\begin{align*}
 A_{0,q}&= \frac{1}{\ell^{2 }}  \int_{0}^{\frac {\ell \pi} 2}   \prod_{m=1}^q  J_0\left( \frac{\ell_m+1/2}{\ell}  \psi \right)   \psi d \psi +   O \left( \frac{1}{\ell^4}   \int_{0}^{\frac {\ell \pi} 2}  \prod_{m=1}^q
 J_0\left( \frac{\ell_m+1/2}{\ell}  \psi \right) \;  \psi^3  d \psi  \right).
 \end{align*}
We consider now the error term. Since for $x \in [0,2]$ we have $J_0(x) \in (0,1]$, if $\varepsilon=\frac{2}{2+1/2}$, for  $\psi \in (0,\varepsilon]$ we have   $J_0\left( \frac{\ell_m+1/2}{\ell}  \psi \right) \in (0,1] $.  Recalling that $|J_0(x)|\le x^{- \frac 1 2} $, we have
\begin{align*}
& \int_{0}^{\frac {\ell \pi} 2} \prod_{m=1}^q |J_0\left( \frac{\ell_m+1/2}{\ell}  \psi \right)| \;  \psi^3  d \psi \\
&= \int_{0}^{\varepsilon}  \prod_{m=1}^q  |J_0\left( \frac{\ell_m+1/2}{\ell}  \psi \right)|  \;  \psi^3  d \psi + \prod_{m=1}^q \left( \frac{\ell_m +1/2}{\ell} \right)^{-\frac 1 2} \int_{\varepsilon}^{\frac{\ell \pi}{2}} \psi^{3-q/2} d \psi\\
 &\le \varepsilon^4+  \prod_{m=1}^q \left( \frac{\ell_m +1/2}{\ell} \right)^{-\frac 1 2} \times
  \begin{cases}
   \frac{1}{8(q-8)} \left( 16 \varepsilon^{4-\frac q 2}-  2^{\frac{q}{2}} (\ell \pi)^{4-\frac{q}{2}}  \right)        & \text{if } q \ne 8, \\
   \log(\frac{\ell \pi}{2})-\log(\varepsilon)        & \text{if } q=8,
  \end{cases}
\end{align*}
so that 
\begin{align*}
 A_{0,q}&= \frac{1}{\ell^{2 }}  \int_{0}^{\frac {\ell \pi} 2} \prod_{m=1}^q J_0\left( \frac{\ell_m+1/2}{\ell}  \psi \right)   \psi d \psi + \begin{cases}
   O \left( \ell^{-4}  + \ell^{-\frac{q}{2}} \right)       & \text{if } q \ne 8, \\
    O \left( \ell^{-4}  +  \ell^{-4}  \log(\frac{\ell \pi}{2})  \right)         & \text{if } q=8.
  \end{cases}
   \end{align*}
\begin{itemize}
\item For $A_{q,0}$, since, in view of Lemma \ref{hilb},  $\delta_m(\theta) \ll \theta^{\frac 1 2} \ell_m^{- \frac 3 2}$, we obtain
\end{itemize}
\begin{align*}
A_{q,0}&= \int_{0}^{\frac \pi 2}      \prod_{m'=1}^q \delta_{m'}(\theta) \sin \theta d \theta \ll \left(\frac{1}{2^{j-1}} \right)^{\frac 3 2 q} \int_0^{\frac \pi 2} \theta^{\frac q 2} \sin \theta d \theta  = O(\ell^{-\frac{3}{2}q}).
\end{align*}
\begin{itemize}
\item For $k=1, \dots, q-1$,
\end{itemize}
\begin{align*}
A_{q-k,k}& \ll \left( \frac{1}{2^{j-1}} \right)^{\frac 3 2 (q-k)}    \int_{0}^{\frac \pi 2} \theta^{\frac 1 2 (q-k)}  \left(\frac{\theta}{\sin \theta}  \right)^{\frac k 2}  \prod_{m=1}^k  J_0((\ell_m+1/2) \theta)  \sin \theta  d \theta\\
  & = \left( \frac{1}{2^{j-1}} \right)^{\frac 3 2 (q-k)}  \left( \frac{\pi}{2} \right)^{\frac 1 2 (q-k)}  A_{0,k}.
 \end{align*}
\end{proof}
\begin{remark}
Note that formula (\ref{PlJ0}) is meaningful only if  $\ell_1, \dots, \ell_q$ satisfy the following  `polygonal' conditions, i.e., for $q \ge 4$ and for all  $k=1, \dots,q$,  
\begin{align} \label{poly}
\ell_k \le \ell_1 + \cdots + \ell_{k-1} + \ell_{k+1} + \cdots + \ell_q, 
\end{align}
while otherwise we have  
$$\int_{0}^{\frac \pi 2}  P_{\ell_1}(\cos \theta) \cdots P_{\ell_q} (\cos \theta)   \sin \theta d \theta=0.$$
\end{remark}
\noindent We exploit Lemma \ref{lll} to prove the following:
\begin{lemma} \label{lem}
For $\ell=2^j$, $q > 4$ and $\tilde{\gamma}(\lfloor \ell x_k\rfloor,\ell)=b^2(\frac{\lfloor \ell x_k\rfloor}{\ell}) \frac{2 \lfloor \ell x_k\rfloor+1}{4 \pi \ell} \left( \frac{\lfloor \ell x_k\rfloor}{\ell}  \right)^{-\alpha} G(\lfloor \ell x_k\rfloor)$, we have
\begin{align*}
&  \lim_{\ell \to \infty} \ell^2  \int_{\frac 1 2}^2 \dots \int_{\frac 1 2}^2  \int_{0}^{\frac \pi 2}   \prod_{k=1}^q    \tilde{\gamma}(\lfloor \ell x_k\rfloor, \ell)    P_{\lfloor \ell x_k\rfloor}(\cos \theta) \sin \theta d \theta 
dx_1 \cdots d x_q\\
&\;\;=\left( \frac{G}{2 \pi} \right)^q  \int_{\frac 1 2}^2 \dots \int_{\frac 1 2}^2    \int_0^\infty  \prod_{k=1}^q b^2(x_k) x_k^{1-\alpha}  J_0(x_k \psi) \psi d \psi  
dx_1 \cdots d x_q.
\end{align*}
\end{lemma}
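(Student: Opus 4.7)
The plan is to reduce the statement to Lemma \ref{lll} and then pass to the limit inside all integrals via dominated convergence. Since the integrand on the left depends on $\theta$ only through $\prod_k P_{\lfloor \ell x_k\rfloor}(\cos\theta)$, I would first apply Lemma \ref{lll} to rewrite
\begin{align*}
\ell^2 \int_0^{\pi/2} \prod_{k=1}^q P_{\lfloor \ell x_k \rfloor}(\cos\theta)\,\sin\theta\, d\theta
= \int_0^{\ell\pi/2} \prod_{k=1}^q J_0\!\left(\tfrac{\lfloor \ell x_k \rfloor + 1/2}{\ell}\,\psi\right) \psi\, d\psi + O(\ell^{-1/2}),
\end{align*}
uniformly for $(x_1,\dots,x_q)\in [1/2,2]^q$. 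Because $b$ is smooth and compactly supported and $G(\lfloor \ell x_k\rfloor)$ is bounded uniformly in $\ell$ and $x_k$, the product $\prod_k \tilde\gamma(\lfloor \ell x_k\rfloor,\ell)$ is uniformly bounded on $[1/2,2]^q$; hence the $O(\ell^{-1/2})$ error integrates to $O(\ell^{-1/2})$, vanishing in the limit.

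The second step is to identify the pointwise limit of the integrand. As $\ell\to\infty$, $\lfloor \ell x_k\rfloor/\ell \to x_k$ uniformly on $[1/2,2]$, so by the continuity of $b^2$ and the fact that $G(\lfloor \ell x_k\rfloor)\to G$,
$$\tilde\gamma(\lfloor \ell x_k \rfloor,\ell) \;\longrightarrow\; \frac{G}{2\pi}\, b^2(x_k)\, x_k^{1-\alpha}.$$
Similarly $(\lfloor \ell x_k\rfloor+1/2)/\ell \to x_k$, so $J_0\!\big((\lfloor \ell x_k\rfloor+1/2)\psi/\ell\big)\to J_0(x_k\psi)$ for each fixed $\psi$. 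Extending the integrand by zero to $[\ell\pi/2,\infty)$, the pointwise limit in $(x_1,\dots,x_q,\psi)$ is precisely the integrand on the right-hand side of the claimed identity.

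The hard part is justifying the exchange of limit and integration, because the $\psi$-integral is over the unbounded domain $(0,\infty)$ after passage to the limit and $J_0$ is not absolutely integrable. I would proceed by constructing a dominating function independent of $\ell$. Using the bounds $|J_0(z)|\le 1$ for all $z$ and $|J_0(z)|\le C z^{-1/2}$ for $z\ge 1$, and noting that $(\lfloor \ell x_k\rfloor+1/2)/\ell \ge 1/3$ for all $\ell$ large enough and $x_k\in[1/2,2]$, one obtains, for $\psi \ge 3$,
$$\Bigl|\prod_{k=1}^q J_0\!\bigl(\tfrac{\lfloor \ell x_k\rfloor+1/2}{\ell}\psi\bigr)\Bigr|\; \psi \;\le\; C_q\, \psi^{1-q/2},$$
while on $(0,3]$ the integrand is dominated by $C'\psi$. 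Since $q>4$ gives $1-q/2<-1$, the function $\min(C'\psi,\, C_q\psi^{1-q/2})$ is integrable on $(0,\infty)$ and, combined with the uniform bound on $\prod_k \tilde\gamma$ on the compact box $[1/2,2]^q$, provides an integrable majorant on $[1/2,2]^q\times(0,\infty)$ independent of $\ell$.

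With the majorant in place, dominated convergence applies to swap the limit with the integrals over $(x_1,\dots,x_q)$ and $\psi$, yielding exactly
$$\left(\frac{G}{2\pi}\right)^q \int_{1/2}^2 \!\cdots \int_{1/2}^2 \int_0^\infty \prod_{k=1}^q b^2(x_k)\, x_k^{1-\alpha}\, J_0(x_k\psi)\,\psi\, d\psi\, dx_1\cdots dx_q,$$
which is the desired limit. The main obstacle is thus isolating the integrable tail bound $\psi^{1-q/2}$ for $\prod_k J_0$: this is where the hypothesis $q>4$ is used, and it explains why the cases $q=2,3,4$ must be handled separately in Theorem \ref{00}.
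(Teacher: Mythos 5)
Your proposal is correct and follows essentially the same route as the paper: reduce to the Bessel-integral approximation of Lemma \ref{lemma var}, identify the pointwise limits of $\tilde\gamma$ and of the $J_0$ factors, and pass to the limit by dominated convergence using $|J_0(z)|\le 1$ near the origin and $|J_0(z)|\le Cz^{-1/2}$ at infinity, with $q>4$ guaranteeing integrability of $\psi^{1-q/2}$. The only cosmetic difference is that you apply dominated convergence once on the product space $[1/2,2]^q\times(0,\infty)$, whereas the paper does it in two nested steps (first in $\psi$, then in $(x_1,\dots,x_q)$); both are valid.
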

\begin{proof}
From Lemma \ref{lemma var}, we have
\begin{align*}
&   \int_{\frac 1 2}^2 \dots \int_{\frac 1 2}^2  \lim_{\ell \to \infty} \ell^2  \prod_{k=1}^q    \tilde{\gamma}(\lfloor \ell x_k\rfloor,\ell)   \int_{0}^{\frac \pi 2}   \prod_{k=1}^q  P_{\lfloor \ell x_k\rfloor}(\cos \theta) \sin \theta d \theta dx_1 \cdots d x_q\\
&\;\;= \int_{\frac 1 2}^2 \dots \int_{\frac 1 2}^2 \lim_{\ell \to \infty}   \prod_{k=1}^q   \tilde{\gamma}(\lfloor \ell x_k\rfloor, \ell)  \int_0^{\frac{\ell \pi }{2}}   \prod_{k=1}^q   J_0\left( \frac{\lfloor \ell x_k  \rfloor +1/2}{\ell}   \psi \right)  \psi d \psi 
dx_1 \cdots d x_q.
\end{align*}
Set $v(\ell,x_1, \dots,x_q)=\int_0^{\frac{\ell \pi }{2}}  \prod_{k=1}^q J_0\left( \frac{\lfloor \ell x_k  \rfloor +1/2}{\ell}   \psi \right)  \psi d \psi$, by dominated convergence we obtain that
\begin{align*}
&\lim_{\ell \to \infty}   v(\ell,x_1, \dots,x_q) \\
 &= \lim_{\ell \to \infty}  \int_{0}^\infty \prod_{k=1}^q J_0\left( \frac{\lfloor \ell x_k  \rfloor +1/2}{\ell}   \psi \right)  \psi d \psi- \lim_{\ell \to \infty}  \int_{2 \pi}^\infty \prod_{k=1}^q J_0\left( \frac{\lfloor \ell x_k  \rfloor +1/2}{\ell}   \psi \right)  \psi \ind_{ \{\psi \in [\frac{\ell \pi}{2},\infty)\}} d \psi\\
&=  \int_{0}^\infty  \prod_{k=1}^q J_0(x_k \psi) \psi d \psi
\end{align*}
in fact, there exists a finite real number $M$ such that
\begin{align*}
|  \prod_{k=1}^q J_0\left( \frac{\lfloor \ell x_k  \rfloor +1/2}{\ell}   \psi \right)  \psi \ind_{ \{\psi \in [\frac{\ell \pi}{2},\infty)\}}| &\le |  \prod_{k=1}^q J_0\left( \frac{\lfloor \ell x_k  \rfloor +1/2}{\ell}   \psi \right)  \psi | \\
& \le
\begin{cases}
\varepsilon & \text{if } \psi \in [0,\varepsilon] \\
\prod_{k=1}^q  \left( \frac{\ell}{\lfloor \ell x_k\rfloor + \frac 1 2}   \right)^{\frac 1 2} \psi^{1- \frac q 2}<M &\text{if } \psi \in [\varepsilon,\infty].
\end{cases}
\end{align*}
This leads to
\begin{align*}
 \lim_{\ell \to \infty}  \prod_{k=1}^q   \tilde{\gamma}(\lfloor \ell x_k\rfloor, \ell)  v(\ell,x_1, \dots,x_q)=\left( \frac{G}{2 \pi} \right)^q  \int_0^\infty  \prod_{k=1}^q b^2(x_k) x_k^{1-\alpha}  J_0(x_k \psi) \psi d \psi. 
 \end{align*}
On the other hand, we apply again dominated convergence to the sequence of measurable functions 
\begin{align*}
u_\ell(x_1, \dots,x_q)= \prod_{k=1}^q   \tilde{\gamma}(\lfloor \ell x_k\rfloor,\ell)
 \int_{0}^{\frac \pi 2}   \prod_{k=1}^q  P_{\lfloor \ell x_k\rfloor}(\cos \theta) \sin \theta d \theta
\end{align*}
on the set $[\frac 1 2, 2]^q$. Since, from Lemma \ref{lemma var}, for all $\ell$ and all $(x_1, \dots, x_q) \in [\frac 1 2, 2]^q$, we have 
\begin{align*}
|u_\ell(x_1, \dots,x_q)|&\le \prod_{k=1}^q | \tilde{\gamma}(\lfloor \ell x_k\rfloor,\ell)| \;\; | \int_{0}^{\frac \pi 2}   \prod_{k=1}^q  P_{\lfloor \ell x_k\rfloor}(\cos \theta)  \sin \theta d \theta | \\
&\le \prod_{k=1}^q | \tilde{\gamma}(\lfloor \ell x_k\rfloor,\ell)| \;\; | \int_0^{\frac{\ell \pi }{2}}  \prod_{k=1}^q J_0\left( \frac{\lfloor \ell x_k  \rfloor +1/2}{\ell}   \psi \right)  \psi d \psi +1| \\
&\le  \prod_{k=1}^q | \tilde{\gamma}(\lfloor \ell x_k\rfloor,\ell)| \left[  \int_{0}^{\varepsilon} \psi d \psi + \int_{\varepsilon}^{\frac{\ell \pi }{2}}  \prod_{k=1}^q \left( \frac{\ell}{\lfloor \ell x_k\rfloor + \frac 1 2}   \right)^{\frac 1 2}   \psi^{1- \frac q 2} d \psi +1\right],
\end{align*} 
where $\varepsilon= \frac{2}{ 2 + 1/ 2} $,  there exists a finite real number $M'$ such that for all $\ell$ and for all $(x_1, \dots, x_q) \in [\frac 1 2, 2]^q$
\begin{align*}
 \prod_{k=1}^q | \tilde{\gamma}(\lfloor \ell x_k\rfloor,\ell)| \left[  \varepsilon^2 +  \prod_{k=1}^q \left( \frac{\ell}{\lfloor \ell x_k\rfloor + \frac 1 2}   \right)^{\frac 1 2}  \frac{4 \varepsilon^{2-\frac q 2}-2^{\frac q 2} (\ell \pi)^{2 -\frac q 2}}{2(q-4)} +1\right] \le M'
\end{align*}
and this leads to
\begin{align*}
\int_{\frac 1 2}^{2}  \cdots  \int_{\frac 1 2}^{2}   \lim_{\ell \to \infty}   u_\ell(x_1, \dots,x_q)  d x_1 \cdots d x_q=\lim_{\ell \to \infty}  \int_{\frac 1 2}^2 \dots \int_{\frac 1 2}^2 u_\ell(x_1, \dots,x_q)  d x_1 \cdots d x_q. 
\end{align*}
\end{proof}

\begin{remark} The previous discussion yields the following corollary: for $q>4$, $\ell=2^j$, $x_k \in [\frac1 2,2]$ with $k=1,\dots,q$, we have
\begin{align*} 
\lim_{\ell \to \infty} \ell^2 \int_0^{\frac \pi 2}  P_{\lfloor \ell x_1  \rfloor} (\cos \theta)  \cdots  P_{\lfloor 
\ell x_q  \rfloor} (\cos \theta) \sin \theta d \theta =  \int_{0}^\infty  J_0(x_1 \psi) \cdots  J_0(x_q \psi) \psi d \psi.
\end{align*}
For $q=3$ it is well-known that, if $x_1,x_2,x_3 >0$, we have
\begin{align*}
\int_0^\infty J_0(x_1 \psi)   J_0(x_2 \psi)  J_0(x_3 \psi) \psi d \psi= \begin{cases} 
\frac{1}{ 2 \pi  \Delta }, &{\text if }\;\; |x_1-x_2| < x_3 < x_1+x_2,\\
0, &{\text if }\;\; 0<x_3 \le |x_1-x_2|  \; \; {\text or }\;\; x_3 \ge x_1+x_2, 
\end{cases}
\end{align*}
where $\Delta=\frac 1 4  \sqrt{[x_3^2-(x_1-x_2)^2] [(x_1+x_2)^2-x_3^2]}$ is equal to the area of a triangle whose sides are $x_1$, $x_2$ and $x_3$, see \cite{GR} formula 6.578.9.
\end{remark}
Before proving Theorem \ref{2}, we introduce some further notation i.e.
$$B_\ell=\sum_{\ell_1=2^{j-1}}^{2^{j+1}} b^2 \left( \frac{\ell_1}{\ell} \right) \frac{2 \ell_1+1}{4 \pi} \ell_1^{-\alpha} G(\ell_1),$$
and we prove the last lemma:
\begin{lemma} \label{4} For $\ell=2^j$, we have that
\begin{align*}
\lim_{\ell \to \infty} \ell^{ \alpha-2} B_\ell=\frac{G}{2 \pi} \int_{\frac 1 2}^{2} b^2(x) x^{1-\alpha} d x.
\end{align*}
\end{lemma}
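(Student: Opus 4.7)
The statement is a Riemann sum convergence result, so the argument is largely a direct computation with careful bookkeeping. First, I would perform the change of summation variable $x = \ell_1/\ell$ inside $B_\ell$, factoring out $\ell^{2-\alpha}$:
\begin{align*}
\ell^{\alpha-2} B_\ell = \sum_{\ell_1 = 2^{j-1}}^{2^{j+1}} b^2\Bigl(\frac{\ell_1}{\ell}\Bigr) \, \frac{2\ell_1+1}{4\pi \ell^2} \, \Bigl(\frac{\ell_1}{\ell}\Bigr)^{-\alpha} G(\ell_1).
\end{align*}
Writing $(2\ell_1+1)/\ell = 2(\ell_1/\ell) + 1/\ell$, the above takes the shape $\tfrac{1}{\ell} \sum_{\ell_1} F_\ell(\ell_1/\ell)$, a Riemann sum over the mesh of size $1/\ell$ on the interval $[1/2,2]$ determined by the support of $b$.

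Second, I would replace $G(\ell_1)$ by its limit $G$. Since $G(\ell) = P(\ell)/Q(\ell)$ is a rational function with $\lim_{\ell \to \infty} G(\ell) = G$, and since $\ell_1 \in [\ell/2, 2\ell]$ forces $\ell_1 \to \infty$ as $\ell \to \infty$, we have $\sup_{\ell_1 \in [\ell/2, 2\ell]} |G(\ell_1) - G| \to 0$. Condition \ref{con} also provides a uniform bound $|G(\ell_1)| \leq c_0$ over the relevant range. Splitting $G(\ell_1) = G + (G(\ell_1) - G)$ separates $\ell^{\alpha-2} B_\ell$ into a main Riemann sum (with $G$ pulled out) and an error term that vanishes, using the boundedness and compact support of $b$ to control the number of nonzero summands by $O(\ell)$.

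Third, I would conclude by the standard convergence of Riemann sums: the integrand $x \mapsto \tfrac{x}{2\pi} b^2(x) x^{-\alpha}$ is continuous on $[1/2, 2]$ (in fact smooth), so
\begin{align*}
\frac{1}{\ell} \sum_{\ell_1 = 2^{j-1}}^{2^{j+1}} b^2\Bigl(\frac{\ell_1}{\ell}\Bigr) \, \frac{2(\ell_1/\ell) + 1/\ell}{4\pi} \, \Bigl(\frac{\ell_1}{\ell}\Bigr)^{-\alpha} \;\longrightarrow\; \int_{1/2}^{2} b^2(x) \, \frac{2x}{4\pi} \, x^{-\alpha} \, dx = \frac{1}{2\pi} \int_{1/2}^{2} b^2(x) x^{1-\alpha} \, dx,
\end{align*}
and multiplying by $G$ gives the claim. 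The only minor technical points are the boundary rounding in the summation indices (negligible since $b$ is smooth and vanishes outside $[1/2,2]$) and the $1/\ell$ correction from $(2\ell_1+1)/\ell$, which contributes a vanishing term; neither constitutes a genuine obstacle.
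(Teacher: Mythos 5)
Your proof is correct and follows essentially the same route as the paper: both arguments recognize $\ell^{\alpha-2}B_\ell$ as a Riemann sum for $\frac{G}{2\pi}\int_{1/2}^{2}b^2(x)x^{1-\alpha}dx$ over the mesh $\{\ell_1/\ell\}$ and pass to the limit. The only cosmetic difference is that the paper rewrites the sum as the integral of a floor-function step integrand and invokes dominated convergence, whereas you invoke the classical Riemann-sum convergence theorem together with a separate uniform estimate $\sup_{\ell_1\ge \ell/2}|G(\ell_1)-G|\to 0$; the two justifications are interchangeable here.
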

\begin{proof}
We first note that
\begin{align*}
\lim_{\ell \to \infty} \ell^{ \alpha-2} B_\ell&=\lim_{\ell \to \infty} \frac{\ell}{\ell^{2-\alpha}} \sum_{\ell_1=\frac \ell 2}^{2 \ell} \int_{\frac{\ell_1}{\ell}}^{\frac{\ell_1+1}{\ell}} b^2\left(\frac{\lfloor \ell x \rfloor}{\ell}\right) \lfloor \ell x \rfloor^{-\alpha} \frac{ 2 \lfloor \ell x \rfloor+1}{4 \pi } G(\lfloor \ell x \rfloor)d x\\
&=\lim_{\ell \to \infty} \int_{\frac 1 2}^{2} b^2\left(\frac{\lfloor \ell x \rfloor}{\ell}\right) \left( \frac{\lfloor \ell x \rfloor}{\ell}\right)^{-\alpha} \frac{ 2 \lfloor \ell x \rfloor+1}{2  \ell} \frac{G(\lfloor \ell x \rfloor)}{2 \pi}  d x,
\end{align*}
and using dominated convergence, we have the statement.
\end{proof}
\begin{proof}[Proof of Theorem \ref{2}]
\begin{align*}
Var[\nu_{j;q}]&=\mathbb{E} \left[\left( \int_{S^2}
H_q(\tilde{\beta}_j(x)) d x \right)^2\right]=\int_{S^2 \times S^2}
\mathbb{E} \left[ H_q(\tilde{\beta}_j(x_1))
H_q(\tilde{\beta}_j(x_2)) \right] d \sigma(x_1) d\sigma(x_2)
\end{align*}
by Proposition \ref{j-g}, for $\ell=2^j$, we have 
\begin{align*}
Var[\nu_{j;q}]&=q! \int_{S^2 \times S^2} \left\{ \mathbb{E} \left[ \tilde{\beta}_j(x_1) \tilde{\beta}_j(x_2) \right] \right\}^q d \sigma(x_1) d \sigma(x_2)\\
&=q! B_\ell^{-q} \int_{S^2\times S^2} \left\{ \mathbb{E} \left[ {\beta}_j(x_1) {\beta}_j(x_2) \right] \right\}^q d \sigma(x_1) d \sigma(x_2) \\
&= q! B_\ell^{-q} \int_{S^2 \times S^2} \left\{ \sum_{\ell_1=2^{j-1}}^{2^{j+1}} b^2\left(\frac{\ell_1}{\ell} \right)  \frac{2 \ell_1+1}{4 \pi}  \ell_1^{-\alpha} G(\ell_1)  P_{\ell_1}(\langle x_1,x_2 \rangle)  \right\}^q d \sigma(x_1) d \sigma(x_2).
\end{align*}
Let  $\tilde{\gamma}(\ell_k, \ell):=b^2\left(\frac{\ell_k}{\ell}\right)   \frac{2 \ell_k+1}{4 \pi \ell}  \left(\frac{\ell_k}{\ell}\right)^{-\alpha} G(\ell_k)$ where, for all $k=1, \dots, q$, $\ell_k \in [2^{j-1},2^{j+1}]$; we have
\begin{align*}
Var[\nu_{j;q}]&= q! \ell^{- \alpha q+q} B_\ell^{-q} \sum_{\ell_1 \dots
\ell_q} \tilde{\gamma}(\ell_1,\ell) \cdots \tilde{\gamma}(\ell_q,\ell) \\
& \;\;\; \times \int_{S^2 \times S^2} P_{\ell_1}(\langle x_1 ,x_2 \rangle) \cdots
P_{\ell_q} (\langle x_1 ,x_2 \rangle) d \sigma(x_1) d \sigma(x_2),
\end{align*}
where 
\begin{align*}
\int_{S^2 \times S^2} P_{\ell_1}(\langle x_1 ,x_2 \rangle) \cdots P_{\ell_q} (\langle x_1 ,x_2 \rangle) d \sigma(x_1) d \sigma(x_2)&= 8 \pi^2 \int_{0}^\pi  P_{\ell_1}(\cos \theta) \cdots P_{\ell_q} (\cos \theta)  \sin \theta d \theta
\end{align*}
Then
\begin{align*}
Var[\nu_{j;q}]&=q! 8 \pi^2  \ell^{- \alpha q+q}B_\ell^{-q}   \sum_{\ell_1 \dots \ell_q} \tilde{\gamma}(\ell_1,\ell) \cdots \tilde{\gamma}(\ell_q,\ell)  \int_{0}^{ \pi }  P_{\ell_1}(\cos \theta) \cdots P_{\ell_q} (\cos \theta)  \sin \theta d \theta \\
&= q! 8 \pi^2  \ell^{- \alpha q+q}B_\ell^{-q}    \sum_{\ell_1 \dots \ell_q \atop \sum l_k  \text{even}} \tilde{\gamma}(\ell_1,\ell) \cdots \tilde{\gamma}(\ell_q,\ell) \; 2  \int_{0}^{\frac \pi 2 }  P_{\ell_1}(\cos \theta) \cdots P_{\ell_q} (\cos \theta)  \sin \theta d \theta 
\end{align*}
since
\begin{align*}
&\int_{0}^\pi  P_{\ell_1}(\cos \theta) \cdots P_{\ell_q} (\cos \theta) (\cos \theta)  \sin \theta d \theta\\
&\hspace{0.4cm}=\begin{cases}
2 \int_{0}^{\frac \pi 2} P_{\ell_1}(\cos \theta) \cdots P_{\ell_q} (\cos \theta)  \sin \theta d \theta, &\text{for }\sum_{k=1}^q l_k  \text{ even},\\
0, &\text{for }\sum_{k=1}^q l_k  \text{ odd}.
\end{cases}
\end{align*}
Also 
\begin{align*}
 Var[\nu_{j;q}] &= q! 8 \pi^2  \ell^{- \alpha q+q}B_\ell^{-q}    \sum_{\ell_1 \dots \ell_q} \tilde{\gamma}(\ell_1,\ell) \cdots \tilde{\gamma}(\ell_q,\ell)  \int_{0}^{\frac \pi 2 }  P_{\ell_1}(\cos \theta) \cdots P_{\ell_q} (\cos \theta)  \sin \theta d \theta \\
&=q! 8 \pi^2  \ell^{- \alpha q+2q} B_\ell^{-q}   \sum_{\ell_1= \frac \ell 2}^{2 \ell} \cdots \sum_{\ell_q=\frac \ell 2}^{2 \ell} \int_{\frac{ \ell_1}{\ell}}^{\frac{\ell_1+1}{\ell}}  \dots   \int_{\frac{\ell_q}{\ell}}^{\frac{\ell_q+1}{\ell}}    \tilde{\gamma}(\lfloor \ell x_1\rfloor, \ell) \cdots \tilde{\gamma}(\lfloor \ell x_q\rfloor, \ell) \\
&\;\;\; \times  \int_{0}^{ \pi }  P_{\lfloor \ell x_1\rfloor}(\cos \theta) \cdots P_{\lfloor \ell x_q\rfloor} (\cos \theta)  \sin \theta d \theta  dx_1 \cdots d x_q\\
&=q! 8 \pi^2  \ell^{- \alpha q+2q} B_\ell^{-q}    \int_{\frac 1 2}^{\frac{2 \ell+1}{\ell}}  \dots    \int_{\frac 1 2}^{\frac{2 \ell+1}{\ell}}   \tilde{\gamma}(\lfloor \ell x_1\rfloor, \ell) \cdots \tilde{\gamma}(\lfloor \ell x_q\rfloor, \ell) \\
&\;\;\; \times  \int_{0}^{ \pi }  P_{\lfloor \ell x_1\rfloor}(\cos \theta) \cdots P_{\lfloor \ell x_q\rfloor} (\cos \theta)  \sin \theta d \theta  dx_1 \cdots d x_q
\end{align*}
and then
\begin{align*}
\lim_{\ell \to \infty}\ell^2 Var[\nu_{j;q}] &= \lim_{\ell \to \infty} q! 8 \pi^2 \ell^{- \alpha q+2q} B_\ell^{-q}   \ell^2  \int_{\frac 1 2}^{2} \cdots  \int_{\frac 1 2}^{2}   \tilde{\gamma}(\lfloor \ell x_1\rfloor,\ell) \cdots \tilde{\gamma}(\lfloor \ell x_q\rfloor, \ell) \\
& \;\;\; \times  \int_{0}^{\pi }  P_{\lfloor \ell x_1\rfloor}(\cos \theta) \cdots P_{\lfloor \ell x_q\rfloor} (\cos \theta)  \sin \theta d \theta dx_1 \cdots d x_q.
\end{align*}
The statement follows by applying Lemma \ref{lem} and Lemma \ref{4}.
\end{proof}
For the cases $q=2,3,4$ we write a different proof based on the
representation of the integral of the product of spherical
harmonics in terms of Wigner's 3j coefficients.

\begin{theorem}
For $q =2$,  we have
$$\lim_{j \to \infty}2^{2j}{\rm Var}[\nu_{j;2}]= 2! c_2$$
where $$c_2= \frac{8 \pi^2 }{\left( \int_{\frac 1 2}^{2} b^2(x) x^{1- \alpha} d x \right)^2}  \int_{\frac 1 2}^{2}   b^4(x_1) x_1^{1-2 \alpha}  d x_1.$$
\end{theorem}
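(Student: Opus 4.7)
The plan is to exploit the fact that when $q=2$ the fourth-moment integral collapses to a single sum via orthogonality of Legendre polynomials, bypassing the Hilb asymptotics / $J_0$ machinery that was needed for $q>4$. First I would apply Proposition \ref{j-g} together with the white noise representation (\ref{Theta}) to write
\begin{align*}
\text{Var}[\nu_{j;2}] &= 2\int_{S^2\times S^2} \{\mathbb{E}[\tilde\beta_j(x)\tilde\beta_j(y)]\}^2 d\sigma(x) d\sigma(y) = 2 B_\ell^{-2} \int_{S^2\times S^2} \rho_j(\langle x,y\rangle)^2 d\sigma(x) d\sigma(y),
\end{align*}
and then expand $\rho_j$ in Legendre polynomials as in (\ref{Theta}).

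The key simplification is that the rotational invariance together with $\int_{S^2} f(\langle x,y\rangle) d\sigma(y) = 2\pi\int_{-1}^{1} f(t) dt$ and the orthogonality relation $\int_{-1}^{1} P_{\ell_1}(t)P_{\ell_2}(t)dt = \frac{2}{2\ell_1+1}\delta_{\ell_1}^{\ell_2}$ give
\begin{equation*}
\int_{S^2\times S^2} P_{\ell_1}(\langle x,y\rangle) P_{\ell_2}(\langle x,y\rangle) d\sigma(x) d\sigma(y) = \frac{16\pi^2}{2\ell_1+1}\delta_{\ell_1}^{\ell_2}.
\end{equation*}
This collapses the double sum arising from $\rho_j^2$ to a single sum, yielding
\begin{equation*}
\text{Var}[\nu_{j;2}] = 2 B_\ell^{-2} \sum_{\ell_1=2^{j-1}}^{2^{j+1}} b^4\!\left(\frac{\ell_1}{2^j}\right) C_{\ell_1}^{2} (2\ell_1+1).
\end{equation*}

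The remaining step is the asymptotic analysis. Inserting $C_{\ell_1}=\ell_1^{-\alpha} G(\ell_1)$ from Condition \ref{con}, factoring out the scale $2^{j(-2\alpha)}$, and interpreting the sum as a Riemann sum in the variable $x=\ell_1/2^j$ with mesh $2^{-j}$ (with the factor $(2\ell_1+1)$ contributing an additional $2 \cdot 2^j x$), a dominated convergence argument entirely parallel to the proof of Lemma \ref{4} gives
\begin{equation*}
\sum_{\ell_1} b^4\!\left(\frac{\ell_1}{2^j}\right) C_{\ell_1}^{2}(2\ell_1+1) \sim 2 G^{2}\, 2^{j(2-2\alpha)} \int_{1/2}^{2} b^{4}(x) x^{1-2\alpha} dx.
\end{equation*}
Combining with $B_\ell \sim 2^{j(2-\alpha)}\frac{G}{2\pi}\int_{1/2}^{2} b^{2}(x) x^{1-\alpha} dx$ from Lemma \ref{4}, the factors of $G$ and of $2^{-2\alpha j}$ cancel, and multiplication by $2^{2j}$ produces exactly $2\, c_{2} = 2!\, c_{2}$.

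The proof is therefore essentially one computation, and I expect no serious obstacle: Legendre orthogonality does all of the heavy lifting that Hilb's asymptotics had to do in the $q>4$ case, and the Riemann-sum passage to the limit is a verbatim repetition of Lemma \ref{4} with $b^{2}(x)x^{1-\alpha}$ replaced by $b^{4}(x) x^{1-2\alpha}$. The only bookkeeping worth a moment's care is tracking that the exponents in $2^{j}$ cancel between the prefactor, the sum, and $B_\ell^{-2}$; this is precisely the reason the scaling $2^{2j}$ is the correct one.
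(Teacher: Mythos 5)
Your proposal is correct and follows essentially the same route as the paper: Hermite orthogonality (Proposition \ref{j-g}) reduces the variance to $2B_\ell^{-2}\int_{S^2\times S^2}\rho_j^2$, Legendre orthogonality collapses the double sum to a single one, and the Riemann-sum/dominated-convergence passage together with Lemma \ref{4} yields $2!\,c_2$ with all powers of $2^j$ and factors of $G$ cancelling exactly as you track. The only cosmetic difference is that the paper first rewrites the spherical double integral as $8\pi^2\int_0^\pi(\cdot)\sin\theta\,d\theta$ before invoking orthogonality, which is equivalent to your direct use of $\int_{-1}^1 P_{\ell_1}P_{\ell_2}\,dt=\frac{2}{2\ell_1+1}\delta_{\ell_1}^{\ell_2}$.
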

\begin{proof}
For  $\ell=2^j$ and $\ell_1,\ell_2 \in [2^{j-1},2^{j+1}]$ we have as before
\begin{align*}
Var[\nu_{j;2}]&=2! B_\ell^{-2} \int_{S^2\times S^2} \left\{ \mathbb{E} \left[ {\beta}_j(x_1) {\beta}_j(x_2) \right] \right\}^2 d \sigma(x_1) d \sigma(x_2) \\
&= 2! 8 \pi^2 \ell^{-2\alpha+2}B_\ell^{-2} \sum_{\ell_1 \ell_2} b^2\left(\frac{\ell_1}{\ell} \right) b^2\left(\frac{\ell_2}{\ell} \right)  \frac{2 \ell_1+1}{4 \pi \ell}  \frac{2 \ell_2+1}{4 \pi \ell}  \left(\frac{\ell_1 \ell_2}{\ell^2}\right)^{-\alpha} G(\ell_1)  G(\ell_2) \\
&\hspace{0.4cm} \times \int_{0}^\pi  P_{\ell_1}(\cos \theta) P_{\ell_2} (\cos \theta)  \sin \theta d \theta,
\end{align*}
from the orthogonality property of Legendre polynomials, we have
\begin{align*}
Var[\nu_{j;2}]&= 2! 8 \pi^2 \ell^{-2 \alpha+2} B_\ell^{-2} \sum_{\ell_1=\frac \ell 2}^{2 \ell} b^4\left(\frac{\ell_1}{\ell} \right)  \left(\frac{2 \ell_1+1}{4 \pi \ell}\right)^2  \left( \frac{\ell_1}{\ell}\right)^{-2\alpha} G^2(\ell_1)\; \frac{2}{2 \ell_1+1}\\
&= 2! 8 \pi^2 \ell^{-2 \alpha+2} B_\ell^{-2}   \int_{\frac{1}{2}}^{\frac{2 \ell +1}{\ell}}  b^4\left(\frac{\lfloor \ell x_1\rfloor }{\ell} \right)  \frac{2\lfloor \ell x_1\rfloor +1}{2 \ell}   \left( \frac{\lfloor \ell x_1\rfloor}{\ell}\right)^{-2\alpha} \left(\frac{G(\lfloor \ell x_1\rfloor )}{2 \pi}\right)^2  d x_1.
\end{align*}
So we see that
\begin{align*}
&\lim_{\ell \to \infty} \ell^2 Var[\nu_{j;2}]\\
&\hspace{0.4cm}=\lim_{\ell \to \infty} 2! 8 \pi^2 \ell^{-2 \alpha+2} B_\ell^{-2} \ell^2 \int_{\frac{1}{2}}^{2}  b^4\left(\frac{\lfloor \ell x_1\rfloor }{\ell} \right)  \frac{2\lfloor \ell x_1\rfloor +1}{2 \ell}   \left( \frac{\lfloor \ell x_1\rfloor}{\ell}\right)^{-2\alpha} \left(\frac{G(\lfloor \ell x_1\rfloor )}{2 \pi}\right)^2  d x_1
\end{align*}
and by applying Lemma \ref{4} and dominated convergence we arrive at the statement.
\end{proof}

We introduce now the Wigner's $3j$ coefficients
$$\left( \begin{matrix}  \ell_1 & \ell_2 &\ell_3 \\   m_1 & m_2 &m_3  \end{matrix}  \right), \hspace{1cm} -(2 \ell_i+1) \le m_i \le 2 \ell_i+1,\;\:i =1,2,3.$$
The Wigner's $3j$ coefficients are zero unless the triangle conditions $| \ell_i- \ell_r|\le \ell_k \le \ell_i+ \ell_r$ for $i,r,k=1,2,3$ are satisfied and $m_1+m_2+m_3=0$, see \cite{marpecbook}  Section 3.5.3 for further details. When $m_1=m_2=m_3=0$, the analytic expression reduces to
\begin{align} \label{mmmm}
 \left( \begin{matrix}  \ell_1 & \ell_2 & \ell_3 \\   0 & 0 &0  \end{matrix}  \right)
\end{align}
\begin{align}
= \begin{cases}
 \frac{  (-1)^{\frac{\ell_1+\ell_2-\ell_3}{2}} \left(  \frac{\ell_1+\ell_2+\ell_3}{2} \right)!}{\left(  \frac{\ell_1+\ell_2-\ell_3}{2} \right)! \left(  \frac{\ell_1-\ell_2+\ell_3}{2} \right)! \left(  \frac{-\ell_1+\ell_2+\ell_3}{2} \right)!}  \left[ \frac{\left( \ell_1+\ell_2-\ell_3 \right)! \left( \ell_1-\ell_2+\ell_3 \right)! \left( -\ell_1+\ell_2+\ell_3\right)!}{\left(  \ell_1+\ell_2+\ell_3+1 \right)!}  \right]^{\frac 1 2},
 & \ell_1+\ell_2+\ell_3 \text{\;\;even},\\  0, &  \ell_1+\ell_2+\ell_3 \text{\;\;odd}, \end{cases} \nonumber
\end{align}
see \cite{VMK}, equations 8.1.2.12 and 8.5.2.32.
\begin{lemma} \label{wigner}
For every fixed $(x_1,x_2,x_3) \in P_3$, define 
$$g_\ell (x_1,x_2,x_3)= \left( \begin{matrix}   \lfloor \ell x_1 \rfloor &    \lfloor \ell x_2 \rfloor   &   \lfloor \ell x_3 \rfloor   \\   0 & 0 &0  \end{matrix}  \right)^2,$$ 
we have that 
$$\lim_{\ell \to \infty} \ell^2 g_{\ell}(x_1,x_2,x_3)=\frac{2}{ \pi}  \frac{1}{\sqrt{  x_1  +   x_2 -  x_3 } \sqrt{  x_1  -   x_2 +  x_3 } \sqrt{-  x_1  +   x_2 +  x_3 } \sqrt{x_1  +   x_2 +  x_3 }},$$
where the limit is defined for all $\ell$ such that $\lfloor \ell x_1 \rfloor +\lfloor \ell x_2 \rfloor+\lfloor \ell x_3 \rfloor$ is even.
\end{lemma}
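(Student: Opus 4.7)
The plan is to attack the statement head-on using the closed-form expression \eqref{mmmm} for the Wigner $3j$ symbol with vanishing bottom row, squaring it and applying Stirling's formula to each factorial. Since we are on the interior of $P_3$, the strict triangle inequalities guarantee that the quantities $a_k := (L/2)-\ell_k$, with $L=\ell_1+\ell_2+\ell_3$, all tend to infinity as $\ell\to\infty$, so Stirling's asymptotics apply uniformly on compact subsets of $P_3$.

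Concretely, writing $\ell_k=\lfloor \ell x_k\rfloor$ and squaring \eqref{mmmm}, the sign vanishes and one obtains
\begin{equation*}
g_\ell(x_1,x_2,x_3)=\frac{\bigl[(L/2)!\bigr]^2 \, (2a_1)!(2a_2)!(2a_3)!}{(a_1!)^2(a_2!)^2(a_3!)^2\,(L+1)!},
\end{equation*}
upon using the identities $L-2\ell_k=2a_k$ and $(L/2)-\ell_k=a_k$. The restriction that $\ell_1+\ell_2+\ell_3$ be even is exactly what makes the $a_k$ integers, so the factorials are well defined along the relevant subsequence.

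The next step is to apply $n!\sim \sqrt{2\pi n}\,(n/e)^n$ term by term. For each $k$ one gets the standard central binomial estimate
\begin{equation*}
\frac{(2a_k)!}{(a_k!)^2}\sim \frac{2^{2a_k}}{\sqrt{\pi a_k}},
\end{equation*}
whose product over $k=1,2,3$ produces a factor $2^{L}\pi^{-3/2}(a_1a_2a_3)^{-1/2}$, using $2(a_1+a_2+a_3)=L$. For the remaining ratio one finds
\begin{equation*}
\frac{[(L/2)!]^2}{(L+1)!}\sim \frac{\pi}{\sqrt{2\pi L}\,2^{L}},
\end{equation*}
so that the $2^{L}$ factors cancel and
\begin{equation*}
g_\ell(x_1,x_2,x_3)\sim \frac{1}{\pi\sqrt{2L\,a_1a_2a_3}}.
\end{equation*}

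To conclude, I would substitute $\ell_k/\ell\to x_k$, so that $L\sim \ell(x_1+x_2+x_3)$ and
\begin{equation*}
2a_1\sim \ell(-x_1+x_2+x_3),\qquad 2a_2\sim \ell(x_1-x_2+x_3),\qquad 2a_3\sim \ell(x_1+x_2-x_3);
\end{equation*}
multiplying these together gives
\begin{equation*}
2L\cdot a_1a_2a_3\sim \frac{\ell^{4}}{4}\,(x_1+x_2+x_3)(-x_1+x_2+x_3)(x_1-x_2+x_3)(x_1+x_2-x_3),
\end{equation*}
and hence
\begin{equation*}
\ell^{2}g_\ell(x_1,x_2,x_3)\longrightarrow \frac{2}{\pi}\,\frac{1}{\sqrt{(x_1+x_2+x_3)(-x_1+x_2+x_3)(x_1-x_2+x_3)(x_1+x_2-x_3)}},
\end{equation*}
which is precisely the claimed limit.

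The only genuine subtleties are bookkeeping ones: first, verifying that the Stirling error terms are $O(1/a_k)$ and therefore vanish in the limit (this uses that we work in the interior of $P_3$, so each $a_k$ grows linearly in $\ell$); and second, tracking the effect of the floor functions, which introduce perturbations of size $O(1)$ in the integer arguments but only $O(1/\ell)$ in the rescaled variables $\ell_k/\ell$, hence are absorbed into the asymptotics. The parity condition is invoked only to ensure the $3j$ symbol is nonzero and the factorial identity applies along the subsequence considered.
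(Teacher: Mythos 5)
Your proposal is correct and follows essentially the same route as the paper: square the closed-form expression (\ref{mmmm}) and apply Stirling's formula to each factorial, with the parity condition guaranteeing the arguments are integers. Your bookkeeping via the central binomial asymptotics $\frac{(2a_k)!}{(a_k!)^2}\sim 2^{2a_k}/\sqrt{\pi a_k}$ is a slightly tidier packaging of the same computation (it absorbs the $e$ and $(1+1/\lambda_0)^{-\lambda_0}$ factors that the paper carries explicitly), and your remarks on the floor functions and on needing the interior of $P_3$ so that each $a_k$ grows linearly are apt.
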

\begin{proof}
Let  $\lambda_0=\lfloor \ell x_1 \rfloor+ \lfloor \ell x_ 2 \rfloor+\lfloor \ell x_3\rfloor$, $\lambda_1=- \lfloor \ell x_1\rfloor+\lfloor \ell x_2 \rfloor+\lfloor \ell x_3 \rfloor$, $\lambda_2=\lfloor \ell x_1 \rfloor-\lfloor \ell x_2 \rfloor+\lfloor \ell x_3 \rfloor$ and $\lambda_3=\lfloor \ell x_1 \rfloor+\lfloor \ell x_2 \rfloor-\lfloor \ell x_3 \rfloor$,   from (\ref{mmmm}), by applying Stirling's formula
$$\ell!=\sqrt{2 \pi} \ell^{\ell+\frac 1 2} e^{-\ell}+O(\ell^{-1})$$
we see that
\begin{align*} 
&\lim_{\ell \to \infty} \ell^2 g_\ell (x_1,x_2,x_3)\\
&=\lim_{\ell \to \infty} \ell^2 \left[  \frac{\sqrt{2 \pi} \left(\frac{\lambda_0}{2}  \right)^{\frac{\lambda_0}{2}+ \frac 1 2} e^{-\frac{\lambda_0}{2}} }{\prod_{i=1}^3  \sqrt{2 \pi} \left(\frac{\lambda_i}{2}  \right)^{\frac{\lambda_i}{2}+ \frac 1 2} e^{-\frac{\lambda_i}{2}}     } \right]^2 \frac{\prod_{i=1}^3  \sqrt{2 \pi} \lambda_i ^{\lambda_i+ \frac 1 2} e^{-\lambda_i}     }{\sqrt{2 \pi} (\lambda_0+1)^{\lambda_0+ \frac 3 2} e^{-\lambda_0-1}}  \\
&=\lim_{\ell \to \infty}  \ell^2 e   \frac{2 \pi (2 \pi)^{\frac 3 2}}{(2 \pi)^3 \sqrt{2 \pi}} 2^{- \lambda_0+2+\sum_{i=1}^3 \lambda_i} \frac{\lambda_0^{\lambda_0+1}  \prod_{i=1}^3 \lambda_i^{\lambda_i+ \frac 1 2}}{(\lambda_0+1)^{\lambda_0+ \frac 3 2}  \prod_{i=1}^3 \lambda_i^{\lambda_i+1}}\\
&= \lim_{\ell \to \infty} \frac{2 e}{ \pi} \frac{\ell^2}{\sqrt{\lambda_1 \lambda_2 \lambda_3}} \frac{\lambda_0}{(\lambda_0+1)^{\frac 3 2}} \left( 1+ \frac{1}{\lambda_0} \right)^{-\lambda_0}\\
&= \lim_{\ell \to \infty} \frac{2 e}{\pi}  \frac{\ell^2}{\sqrt{(-\lfloor \ell x_1\rfloor+\lfloor \ell x_2\rfloor+\lfloor \ell x_3\rfloor)(\lfloor \ell x_1\rfloor-\lfloor \ell x_2\rfloor+\lfloor \ell x_3\rfloor) (\lfloor \ell x_1\rfloor+\lfloor \ell x_2\rfloor-\lfloor \ell x_3\rfloor)}} \\
&\hspace{0.4cm} \times  \frac{\lfloor \ell x_1\rfloor+\lfloor \ell x_2\rfloor+\lfloor \ell x_3\rfloor}{(\lfloor \ell x_1\rfloor+\lfloor \ell x_2\rfloor+\lfloor \ell x_3\rfloor+1)^{\frac 3 2}}  \left( 1+\frac{1}{\lfloor \ell x_1\rfloor+\lfloor \ell x_2\rfloor+\lfloor \ell x_3\rfloor}  \right)^{-(\lfloor \ell x_1\rfloor+\lfloor \ell x_2\rfloor+\lfloor \ell x_3\rfloor)}\\
&=\frac{2}{ \pi}  \frac{1}{\sqrt{  x_1  +   x_2 -  x_3 } \sqrt{  x_1  -   x_2 +  x_3 } \sqrt{-  x_1  +   x_2 +  x_3 } \sqrt{x_1  +   x_2 +  x_3 }}.
\end{align*}
\end{proof}
\begin{remark}
Note that for $\lfloor \ell x_1\rfloor=\lfloor \ell x_2\rfloor=\lfloor \ell x_3\rfloor=\ell$ we have the same result as in \cite{MaWi22} Lemma A.1, in fact 
\begin{align*}
\lim_{\ell \to \infty} \ell^2 \left( \begin{matrix}  \ell &  \ell   &\ell   \\   0 & 0 &0  \end{matrix}  \right)^2= \lim_{\ell \to \infty} \frac{2 e}{\pi}  \frac{\ell^2}{\sqrt{\ell^3}}  \frac{3 \ell}{(3 \ell+1)^{\frac 3 2}}  \left( 1+\frac{1}{3 \ell}  \right)^{-3 \ell}=\frac{2}{\pi \sqrt 3}.
\end{align*}
\end{remark}

\begin{theorem}
For $q =3$,  we have
$$\lim_{j \to \infty}2^{2j}{\rm Var}[\nu_{j;3}]= 3! c_3$$
where
\begin{align*}c_3&= \frac{16 \pi}{  \left(\int_{\frac 1 2}^{2} b^2(x) x^{1- \alpha} d x \right)^3} \int_{\frac 1 2}^2 \cdots \int_{\frac 1 2}^2 \prod_{i=1}^3 b^2(x_i)  x_i^{1-\alpha}  \\
&\hspace{0.4cm} \times    \frac{1}{\sqrt{  x_1  +   x_2 -  x_3 } \sqrt{  x_1  -   x_2 +  x_3 } \sqrt{-  x_1  +   x_2 +  x_3 } \sqrt{x_1  +   x_2 +  x_3 }} \\
&\hspace{0.4cm} \times  \ind_{P_3}(x_1,x_2,x_3) d x_1 d x_2 d x_3.\end{align*}
\end{theorem}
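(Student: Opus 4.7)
The strategy is to mirror the $q=2$ argument, replacing the Legendre orthogonality relation with the Gaunt-type identity
$$\int_0^\pi P_{\ell_1}(\cos\theta)\,P_{\ell_2}(\cos\theta)\,P_{\ell_3}(\cos\theta)\,\sin\theta\, d\theta \;=\; 2\begin{pmatrix}\ell_1 & \ell_2 & \ell_3\\ 0 & 0 & 0\end{pmatrix}^2$$
and then extracting the Stirling-type asymptotics of the squared Wigner $3j$ coefficient provided by Lemma \ref{wigner}. Starting from Proposition \ref{j-g} and the isonormal representation of $\tilde\beta_j$ developed in Section 3, with $\ell=2^j$, one writes
$$\text{Var}[\nu_{j;3}] \;=\; 3!\, B_\ell^{-3}\int_{S^2\times S^2}\{\mathbb{E}[\beta_j(x_1)\beta_j(x_2)]\}^3\, d\sigma(x_1)\, d\sigma(x_2),$$
expands the cube of the covariance in Legendre polynomials, and uses rotation invariance together with the Gaunt identity above to obtain
$$\text{Var}[\nu_{j;3}] \;=\; 3!\cdot 16\pi^2\, B_\ell^{-3}\,\ell^{\,3-3\alpha}\sum_{\ell_1,\ell_2,\ell_3=2^{j-1}}^{2^{j+1}}\prod_{k=1}^3 \tilde\gamma(\ell_k,\ell)\begin{pmatrix}\ell_1 & \ell_2 & \ell_3\\ 0 & 0 & 0\end{pmatrix}^2.$$
The Wigner squared automatically enforces both the triangle inequalities defining $P_3$ and the parity condition that $\ell_1+\ell_2+\ell_3$ be even, vanishing otherwise.

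One then converts the triple sum to a Riemann integral by the substitution $x_k\in [\ell_k/\ell,(\ell_k+1)/\ell]$ as in the proof of Theorem \ref{2}, gaining a factor $\ell^3$, and multiplies by $\ell^2$ to obtain
$$\ell^2\,\text{Var}[\nu_{j;3}] \;=\; 3!\cdot 16\pi^2\, B_\ell^{-3}\,\ell^{\,6-3\alpha}\int_{[1/2,\,(2\ell+1)/\ell]^3}\prod_{k=1}^3 \tilde\gamma(\lfloor \ell x_k\rfloor,\ell)\cdot \ell^2 g_\ell(x_1,x_2,x_3)\, dx_1 dx_2 dx_3.$$
Lemma \ref{wigner} identifies the pointwise limit of $\ell^2 g_\ell(x_1,x_2,x_3)$, along the subsequence of $\ell$ with $\sum_k\lfloor\ell x_k\rfloor$ even, as
$$\frac{2}{\pi\sqrt{(x_1+x_2-x_3)(x_1-x_2+x_3)(-x_1+x_2+x_3)(x_1+x_2+x_3)}}\,\ind_{P_3}(x_1,x_2,x_3),$$
while $g_\ell\equiv 0$ on the complementary subsequence. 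Since the indicator of the parity-even set averages to the constant $\tfrac12$ when integrated against continuous test functions (a standard consequence of the rapid oscillation of $\lfloor \ell x_k\rfloor\bmod 2$), a dominated convergence argument -- with the uniform Stirling bound
$$\ell^2 g_\ell(x_1,x_2,x_3) \;\le\; C\,[(x_1+x_2-x_3)(x_1-x_2+x_3)(-x_1+x_2+x_3)(x_1+x_2+x_3)]^{-1/2}\,\ind_{P_3}$$
serving as an integrable dominator -- yields the effective kernel $\tfrac{1}{\pi}[(x_1+x_2-x_3)(x_1-x_2+x_3)(-x_1+x_2+x_3)(x_1+x_2+x_3)]^{-1/2}\,\ind_{P_3}$ inside the integral. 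Combined with Lemma \ref{4}, which gives $B_\ell^{-3}\ell^{6-3\alpha}\to (2\pi/G)^3(\int_{1/2}^2 b^2(x)x^{1-\alpha}dx)^{-3}$, and the pointwise limit $\tilde\gamma(\lfloor\ell x_k\rfloor,\ell)\to (G/(2\pi))\,b^2(x_k)\,x_k^{1-\alpha}$, all powers of $G$ and $2\pi$ cancel, and the overall prefactor reduces to $3!\cdot 16\pi^2\cdot \tfrac{1}{\pi}= 3!\cdot 16\pi$, matching the formula for $3!\,c_3$.

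The main obstacle is the rigorous handling of the parity oscillation. The sequence $\ell^2 g_\ell$ does \emph{not} converge pointwise: it alternates between zero and the Stirling kernel depending on the parity of $\sum_k\lfloor\ell x_k\rfloor$. The limit must therefore be extracted via dominated convergence on the parity-even subsequence, combined with the weak averaging $\ind_{\{\sum_k\lfloor\ell x_k\rfloor\text{ even}\}}\to 1/2$. This averaging contributes precisely the factor $\tfrac12$ that is essential to recover the coefficient $16\pi$ (rather than $32\pi$) in $c_3$. Integrability of the limiting kernel on $[1/2,2]^3\cap P_3$ is ensured because each singularity of the form $(x_1+x_2-x_3)^{-1/2}$ etc.\ on $\partial P_3$ is of type $s^{-1/2}$ and hence locally integrable.
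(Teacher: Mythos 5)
Your proposal is correct and follows essentially the same route as the paper: Proposition \ref{j-g}, the Gaunt identity $\int_0^\pi P_{\ell_1}P_{\ell_2}P_{\ell_3}\sin\theta\,d\theta = 2\left(\begin{smallmatrix}\ell_1 & \ell_2 & \ell_3\\ 0&0&0\end{smallmatrix}\right)^2$, the Stirling asymptotics of Lemma \ref{wigner}, conversion of the triple sum to a Riemann integral, and Lemmas \ref{4} plus dominated convergence. If anything, your explicit treatment of the parity oscillation --- extracting the factor $\tfrac12$ via the weak-$*$ convergence $\ind_{\{\sum_k\lfloor\ell x_k\rfloor\ \mathrm{even}\}}\rightharpoonup \tfrac12$ --- is more careful than the paper, which absorbs this factor implicitly when it passes from the sum over even-parity triples of cell integrals to an integral over the full cube.
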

\begin{proof}
For  $\ell=2^j$ and $\ell_1,\ell_2, \ell_3 \in [2^{j-1},2^{j+1}]$ we have
\begin{align*}
Var[\nu_{j;3}]&=3! B_\ell^{-3} \int_{S^2\times S^2} \left\{ \mathbb{E} \left[ {\beta}_j(x_1) {\beta}_j(x_2) \right] \right\}^3 d \sigma(x_1) d \sigma(x_2) \\
&= 3! 8 \pi^2 B_\ell^{-3} \sum_{\ell_1 \ell_2 \ell_3}  \prod_{i=1}^3 b^2\left(\frac{\ell_i}{\ell} \right) \frac{2 \ell_i+1}{4 \pi}    \ell_i^{-\alpha} G(\ell_i)  \int_{0}^\pi  P_{\ell_1}(\cos \theta) P_{\ell_2} (\cos \theta)  P_{\ell_3}(\cos \theta)  \sin \theta d \theta.
\end{align*}
By expressing Legendre polynomials in terms of spherical harmonics 
and by applying the well-known formula for the integral of the product of three spherical harmonics over the sphere 
(see \cite{marpecbook} Section 3.5.3 for a proof), we obtain
$$\int_{0}^\pi  P_{\ell_1}(\cos \theta) P_{\ell_2} (\cos \theta)  P_{\ell_3}(\cos \theta)  \sin \theta d \theta=2 \left( \begin{matrix}  \ell_1 & \ell_2 &\ell_3 \\   0 & 0 &0  \end{matrix}  \right)^2,$$
and then, from (\ref{mmmm}), 
\begin{align*}
Var[\nu_{j;3}]&= 3! 8 \pi^2 B_\ell^{-3} \sum_{\ell_1 \ell_2 \ell_3 \atop \sum l_k \text{even}}  \prod_{i=1}^3  b^2\left(\frac{\ell_i}{\ell} \right)   \frac{2 \ell_i+1}{4 \pi}  \ell_i^{-\alpha} G(\ell_i)  \; 2 \left( \begin{matrix}  \ell_1 & \ell_2 &\ell_3 \\   0 & 0 &0  \end{matrix}  \right)^2\\
&= 3! 8 \pi^2  \ell^{-3 \alpha+6}  B_\ell^{-3} \sum_{\ell_1 \ell_2 \ell_3 \atop \sum l_k \text{even}} \int_{\frac{l_1} l}^{\frac{l_1+1}{l}} \cdots \int_{\frac{l_3} l}^{\frac{l_3+1}{l}}   \prod_{i=1}^3     b^2\left(\frac{\lfloor \ell x_i\rfloor }{\ell} \right)    \frac{2\lfloor \ell x_i\rfloor+1}{2 \ell}   \\
&\hspace{0.4cm} \times \left( \frac{\lfloor \ell x_i \rfloor }{\ell} \right)^{-\alpha} \frac{G(\lfloor \ell x_i\rfloor)}{2 \pi} 2  \left( \begin{matrix}  \lfloor \ell x_1\rfloor & \lfloor \ell x_2\rfloor &\lfloor \ell x_3\rfloor \\   0 & 0 &0  \end{matrix}  \right)^2 d x_1 d x_2 d x_3.
\end{align*}
Applying dominated convergence again and Lemma \ref{wigner},  
\begin{align*}
& \hspace{3cm} \lim_{\ell \to \infty} \ell^2 Var[\nu_{j;3}]\\
&=\lim_{\ell \to \infty} 3! 8 \pi^2  \ell^{-3 \alpha+6}  B_\ell^{-3} \sum_{\ell_1 \ell_2 \ell_3 \atop \sum l_k \text{even}} \int_{\frac{l_1} l}^{\frac{l_1+1}{l}} \cdots \int_{\frac{l_3} l}^{\frac{l_3+1}{l}}   \prod_{i=1}^3     b^2\left(\frac{\lfloor \ell x_i\rfloor }{\ell} \right)    \frac{2\lfloor \ell x_i\rfloor+1}{2 \ell}  \left( \frac{\lfloor \ell x_i \rfloor }{\ell} \right)^{-\alpha} \\
&\hspace{0.4cm} \times  \frac{G(\lfloor \ell x_i\rfloor)}{2 \pi} \frac{2 e}{\pi} 2 \frac{\ell^2}{\sqrt{(-\lfloor \ell x_1\rfloor+\lfloor \ell x_2\rfloor+\lfloor \ell x_3\rfloor)(\lfloor \ell x_1\rfloor-\lfloor \ell x_2\rfloor+\lfloor \ell x_3\rfloor) (\lfloor \ell x_1\rfloor+\lfloor \ell x_2\rfloor-\lfloor \ell x_3\rfloor)}} \\
&\hspace{0.4cm} \times  \frac{\lfloor \ell x_1\rfloor+\lfloor \ell x_2\rfloor+\lfloor \ell x_3\rfloor}{(\lfloor \ell x_1\rfloor+\lfloor \ell x_2\rfloor+\lfloor \ell x_3\rfloor+1)^{\frac 3 2}}  \left( 1+\frac{1}{\lfloor \ell x_1\rfloor+\lfloor \ell x_2\rfloor+\lfloor \ell x_3\rfloor}  \right)^{-(\lfloor \ell x_1\rfloor+\lfloor \ell x_2\rfloor+\lfloor \ell x_3\rfloor)}\\
&\hspace{0.4cm} \times \ind_{P_3}(x_1,x_2,x_3) d x_1 d x_2 d x_3\\
&=\lim_{\ell \to \infty} 3! 8 \pi^2  \ell^{-3 \alpha+6}  B_\ell^{-3}  \int_{\frac 1 2}^{\frac{2 l+1}{l}} \cdots \int_{\frac 1 2}^{\frac{ 2 l+1}{l}}   \prod_{i=1}^3     b^2\left(\frac{\lfloor \ell x_i\rfloor }{\ell} \right)    \frac{2\lfloor \ell x_i\rfloor+1}{2 \ell}  \left( \frac{\lfloor \ell x_i \rfloor }{\ell} \right)^{-\alpha} \\
&\hspace{0.4cm} \times  \frac{G(\lfloor \ell x_i\rfloor)}{2 \pi} \frac{2 e}{\pi} \frac{\ell^2}{\sqrt{(-\lfloor \ell x_1\rfloor+\lfloor \ell x_2\rfloor+\lfloor \ell x_3\rfloor)(\lfloor \ell x_1\rfloor-\lfloor \ell x_2\rfloor+\lfloor \ell x_3\rfloor) (\lfloor \ell x_1\rfloor+\lfloor \ell x_2\rfloor-\lfloor \ell x_3\rfloor)}} \\
&\hspace{0.4cm} \times  \frac{\lfloor \ell x_1\rfloor+\lfloor \ell x_2\rfloor+\lfloor \ell x_3\rfloor}{(\lfloor \ell x_1\rfloor+\lfloor \ell x_2\rfloor+\lfloor \ell x_3\rfloor+1)^{\frac 3 2}}  \left( 1+\frac{1}{\lfloor \ell x_1\rfloor+\lfloor \ell x_2\rfloor+\lfloor \ell x_3\rfloor}  \right)^{-(\lfloor \ell x_1\rfloor+\lfloor \ell x_2\rfloor+\lfloor \ell x_3\rfloor)}\\
&\hspace{0.4cm} \times \ind_{P_3}(x_1,x_2,x_3) d x_1 d x_2 d x_3
\end{align*}
Then, by dominated convergence again and Lemma \ref{4}, we arrive at the statement.
\end{proof}
\begin{theorem}
For $q =4$,  
$$\lim_{j \to \infty}2^{2j}{\rm Var}[\nu_{j;4}]= 4! c_4$$
where
\begin{align*}c_4&=  \frac{32}{  \left(\int_{\frac 1 2}^{2} b^2(x) x^{1- \alpha} d x \right)^4}
\int_{\frac{1}{2}}^{2} \cdots \int_{\frac{1}{2}}^{2}  \prod_{i=1}^4   b^2(x_i) x_i^{1-\alpha}   \\
& \hspace{0.4cm} \times  \int_{0}^{4} y    \frac{1}{\sqrt{-x_1+x_2+ y } \sqrt{x_1-x_2+ y } \sqrt{x_1+x_2- y } \sqrt{x_1+x_2+ y }} \\
&\hspace{0.4cm} \times \frac{1}{\sqrt{-x_3+x_4+ y } \sqrt{x_3-x_4+ y } \sqrt{x_3+x_4- y } \sqrt{x_3+x_4+ y }}\\
& \hspace{0.4cm} \times \ind_{P_3}(x_1,x_2, y) \ind_{P_3}(y,x_3,x_4)   \;  dy \; d x_1\cdots d x_4.
\end{align*}
\end{theorem}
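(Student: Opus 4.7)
The plan is to adapt the $q=3$ argument, with the new ingredient being a ``fusion'' sum over an intermediate index $L$ arising from the product of four Legendre polynomials. Starting from
$$\text{Var}[\nu_{j;4}] = 4! \cdot 8 \pi^2 B_\ell^{-4} \sum_{\ell_1, \ldots, \ell_4} \prod_{i=1}^4 b^2\Bigl(\frac{\ell_i}{\ell}\Bigr) C_{\ell_i} \frac{2\ell_i+1}{4\pi} \int_0^\pi \prod_{i=1}^4 P_{\ell_i}(\cos\theta)\, \sin\theta\, d\theta,$$
the next step is to apply the classical pointwise product expansion $P_{\ell_1}(t) P_{\ell_2}(t) = \sum_L (2L+1) \left(\begin{matrix} \ell_1 & \ell_2 & L \\ 0 & 0 & 0 \end{matrix}\right)^2 P_L(t)$ to each of the pairs $(\ell_1,\ell_2)$ and $(\ell_3,\ell_4)$ and then use orthogonality of the Legendre polynomials to obtain
$$\int_0^\pi \prod_{i=1}^4 P_{\ell_i}(\cos\theta)\, \sin\theta\, d\theta = 2\sum_L (2L+1) \left(\begin{matrix} \ell_1 & \ell_2 & L \\ 0 & 0 & 0 \end{matrix}\right)^2 \left(\begin{matrix} \ell_3 & \ell_4 & L \\ 0 & 0 & 0 \end{matrix}\right)^2.$$
The triangle conditions on the two $3j$-symbols, together with $\ell_i \in [\ell/2,2\ell]$, confine $L$ to $[0,4\ell]$, so the substitution $L = \lfloor \ell y \rfloor$ with $y \in [0,4]$ is natural, and the two triangle conditions translate into the indicator pair $\ind_{P_3}(x_1,x_2,y)\,\ind_{P_3}(y,x_3,x_4)$ appearing in $c_4$.

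I would then convert the five sums over $(\ell_1,\ldots,\ell_4,L)$ into Riemann integrals over $[1/2,2]^4 \times [0,4]$ in the style of Lemmas \ref{lem} and \ref{4}, and apply Lemma \ref{wigner} to each of the two $3j$-symbols squared, producing the two inverse-Heron-type factors $\Delta_{12}(x_1,x_2,y)^{-1}$ and $\Delta_{34}(y,x_3,x_4)^{-1}$ that appear in the integrand of $c_4$. The counting of $\ell$-powers is transparent: the five sums contribute $\ell^5$, the two Wigner factors squared contribute $\ell^{-4}$, the prefactor $(2L+1) \sim 2\ell y$ contributes one more $\ell$, the four covariance factors $C_{\ell_i}(2\ell_i+1)/(4\pi)$ contribute $\ell^{4-4\alpha}$, and $B_\ell^{-4} \sim \ell^{4\alpha-8}$ by Lemma \ref{4}, so that the total scaling of $\text{Var}[\nu_{j;4}]$ collapses to $\ell^{-2}$. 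One small but important bookkeeping point is the parity selection rule for the $3j$-symbols: both vanish unless $\ell_1+\ell_2+L$ and $\ell_3+\ell_4+L$ are even, so only quadruples $(\ell_1,\ldots,\ell_4)$ with even total sum contribute, and for each such quadruple $L$ is restricted to a single parity, contributing two factors of $1/2$ that reduce an initial prefactor of $128$ down to the advertised $32$ in $c_4$.

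The main obstacle will be justifying the two-fold passage to the limit (inside the $y$-integral and inside the $(x_1,\ldots,x_4)$-integrals) by dominated convergence, since the pointwise limit carries inverse-square-root singularities at the boundaries of $P_3$. These singularities are integrable, but a uniform-in-$\ell$ $L^1$ majorant is required. A natural strategy is to split the $y$-interval $[0,4]$ into a bulk region, where the sharp uniform form of Hilb's asymptotics underlying Lemma \ref{wigner} gives a good pointwise control on the $3j$-symbols, and a boundary neighborhood of the vanishing Heron factor, where one uses either a direct estimate on the $3j$-symbols or the normalization identity $\sum_L (2L+1) \left(\begin{matrix} \ell_1 & \ell_2 & L \\ 0 & 0 & 0 \end{matrix}\right)^2 = 1$ combined with a Cauchy--Schwarz argument to bound $\sum_L (2L+1) W_{12}^2 W_{34}^2$ uniformly in $\ell,\ell_1,\ldots,\ell_4$. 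Once a suitable majorant is in place, passing to the limit under the integrals and tracking the constants via Lemma \ref{4} yields exactly the formula for $c_4$ stated in the theorem.
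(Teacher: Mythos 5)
Your proposal follows essentially the same route as the paper: the product formula $P_{\ell_1}P_{\ell_2}=\sum_L(2L+1)\left(\begin{smallmatrix}\ell_1&\ell_2&L\\0&0&0\end{smallmatrix}\right)^2 P_L$ together with orthogonality to reduce the four-fold Legendre integral to a sum over $L$ of two squared Wigner $3j$ symbols, the substitution $L=\lfloor\ell y\rfloor$, Lemma \ref{wigner} for each symbol, and dominated convergence plus Lemma \ref{4} to pass to the limit; your power counting and parity bookkeeping (two factors of $1/2$ bringing the constant to $32$) also match the paper's computation. Your added attention to constructing an explicit uniform-in-$\ell$ $L^1$ majorant near the boundary of the triangle region is a point the paper handles only implicitly, but it does not change the argument.
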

\begin{proof}
For  $\ell=2^j$ and $\ell_1, \ell_2, \ell_3, \ell_4 \in [2^{j-1},2^{j+1}]$ we have
\begin{align*}
Var[\nu_{j;4}]&=4! B_\ell^{-4} \int_{S^2\times S^2} \left\{ \mathbb{E} \left[ {\beta}_j(x_1) {\beta}_j(x_2) \right] \right\}^4 d \sigma(x_1) d \sigma(x_2) \\
&= 4! 8 \pi^2 B_\ell^{-4} \sum_{\ell_1 \ell_2 \ell_3 \ell_4} \prod_{i=1}^4 b^2\left(\frac{\ell_i}{\ell} \right)    \frac{2 \ell_i+1}{4 \pi}  \ell_i^{-\alpha} G(\ell_i) \int_{0}^\pi  \prod_{i=1}^4 P_{\ell_i}(\cos \theta) \sin \theta d \theta.
\end{align*}
From the product formula
\begin{align*}
Y_{\ell_10}(\theta, \phi)  Y_{\ell_20}(\theta, \phi) =\sqrt{\frac{(2 \ell_1+1)(2 \ell_2+1)}{4 \pi}} \sum_{L=|\ell_1-\ell_2|}^{\ell_1+\ell_2} \sqrt{2L+1}   \left( \begin{matrix}  \ell_1 & \ell_2 &L \\   0 & 0 &0  \end{matrix}  \right)^2 Y_{L0}(\theta,\phi),
\end{align*}
and the orthogonality property of spherical harmonics, we obtain the following formula for the integral of the product of four spherical harmonics over the sphere
\begin{align*}
&\int_{0}^{2 \pi} \int_{0}^\pi \prod_{i=1}^4 Y_{\ell_i 0}(\theta, \phi) \sin \theta d \theta d \phi\\
&=\sqrt{\frac{(2 \ell_1+1)(2 \ell_2+1)}{4 \pi}}  \sqrt{\frac{(2 \ell_3+1)(2 \ell_4+1)}{4 \pi}} \sum_{L_1=|\ell_1-\ell_2|}^{\ell_1+\ell_2} \sqrt{2 L_1+1} \left( \begin{matrix}  \ell_1 & \ell_2 &L_1 \\   0 & 0 &0  \end{matrix}  \right)^2 \\
& \hspace{0.4cm} \times \sum_{L_2=|\ell_3-\ell_4|}^{\ell_3+\ell_4} \sqrt{2 L_2+1} \left( \begin{matrix}  \ell_3 & \ell_4 &L_2 \\   0 & 0 &0  \end{matrix}  \right)^2 \delta_{L_1}^{L_2} \\
&=4 \pi\prod_{i=1}^4 \sqrt{\frac{2 \ell_i+1}{4 \pi}} \sum_{L} (2L+1) \left( \begin{matrix}  \ell_1 & \ell_2 &L \\   0 & 0 &0  \end{matrix}  \right)^2 \left( \begin{matrix}  \ell_3 & \ell_4 &L \\   0 & 0 &0  \end{matrix}  \right)^2,
\end{align*}
that is 
\begin{align*}
\int_{0}^\pi  \prod_{i=1}^4 P_{\ell_i}(\cos \theta) \sin \theta d \theta=2 \sum_{L} (2L+1) \left( \begin{matrix}  \ell_1 & \ell_2 &L \\   0 & 0 &0  \end{matrix}  \right)^2 \left( \begin{matrix}  \ell_3 & \ell_4 &L \\   0 & 0 &0  \end{matrix}  \right)^2.
\end{align*}
We can write the variance as
\begin{align*}
Var[\nu_{j;4}]&=4! 8 \pi^2 B_\ell^{-4} \sum_{l_1 l_2 l_3 l_4} \; \prod_{i=1}^4 b^2\left(\frac{\ell_i}{\ell} \right)    \frac{2 \ell_i+1}{4 \pi}  \ell_i^{-\alpha} G(\ell_i)  \\
&\hspace{0.4cm} \times 2 \sum_{{L \atop l_1+l_2+L \text{ even}} \atop  l_3+l_4+L \text{ even}} (2L+1) \left( \begin{matrix}  \ell_1 & \ell_2 &L \\   0 & 0 &0  \end{matrix}  \right)^2 \left( \begin{matrix}  \ell_3 & \ell_4 &L \\   0 & 0 &0  \end{matrix}  \right)^2.
\end{align*}
Since $\max \{|\ell_1-\ell_2|, |\ell_3-\ell_4|\} \le L \le \min \{ \ell_1+\ell_2, \ell_3+\ell_4 \}$ where $ |\ell_i-\ell_k| \ge 0$ and $\ell_i+\ell_k \le 4 \ell$, we can write
\begin{align*}
Var[\nu_{j;4}]&=4! 8 \pi^2 \ell^{-4 \alpha+10}  B_\ell^{-4}   \sum_{\ell_1 \ell_2 \ell_3 \ell_4} \int_{\frac{\ell_1}{\ell}}^{\frac{\ell_1+1}{\ell}} \cdots \int_{\frac{\ell_4}{\ell}}^{\frac{\ell_4+1}{\ell}}  \prod_{i=1}^4   b^2\left(\frac{\lfloor \ell x_i\rfloor }{\ell} \right)    \frac{2 \lfloor \ell x_i\rfloor  +1}{2 \ell}\\   
&\hspace{0.4cm} \times \left( \frac{ \lfloor \ell x_i\rfloor }{\ell} \right)^{-\alpha} \frac{G(\lfloor \ell x_i\rfloor )}{2 \pi} \; 2 \sum_{L=0  \atop {l_1+l_2+L \text{ even} \atop  l_3+l_4+L \text{ even}}}^{4 \ell} \int_{\frac  L \ell}^{\frac {L+1} \ell} \frac{2 \lfloor \ell y \rfloor +1}{2 \ell} \\
 &\hspace{0.4cm} \times  \left( \begin{matrix}  \lfloor \ell x_1\rfloor  & \lfloor \ell x_2\rfloor  &\lfloor \ell y\rfloor  \\   0 & 0 &0  \end{matrix}  \right)^2 \left( \begin{matrix}  \lfloor \ell x_3\rfloor  & \lfloor \ell x_4\rfloor  &\lfloor \ell y\rfloor  \\   0 & 0 &0  \end{matrix}  \right)^2 dy d x_1\cdots d x_4. 
\end{align*}
Then, by dominated convergence and Lemma \ref{wigner}, we have  
\begin{align*}
& \hspace{3cm}\lim_{\ell \to \infty} \ell^2 Var[\nu_{j;4}]\\
&=\lim_{\ell \to \infty}  4! 8 \pi^2 \ell^{-4 \alpha+8}  B_\ell^{-4}   \sum_{\ell_1 \ell_2 \ell_3 \ell_4} \int_{\frac{\ell_1}{\ell}}^{\frac{\ell_1+1}{\ell}} \cdots \int_{\frac{\ell_4}{\ell}}^{\frac{\ell_4+1}{\ell}}  \prod_{i=1}^4   b^2\left(\frac{\lfloor \ell x_i\rfloor }{\ell} \right)    \frac{2 \lfloor \ell x_i\rfloor  +1}{2 \ell} \\  
&\hspace{0.4cm} \times \left( \frac{ \lfloor \ell x_i\rfloor }{\ell} \right)^{-\alpha} \frac{G(\lfloor \ell x_i\rfloor )}{2 \pi}  \; 2 \sum_{L=0  \atop {l_1+l_2+L \text{ even} \atop  l_3+l_4+L \text{ even}}}^{4 \ell} \int_{\frac  L \ell}^{\frac {L+1} \ell} \frac{2 \lfloor \ell y \rfloor +1}{2 \ell}   \frac{4 e^2}{\pi^2} \\
&\hspace{0.4cm} \times \frac{\ell^2}{\sqrt{(-\lfloor \ell x_1\rfloor+\lfloor \ell x_2\rfloor+\lfloor \ell y \rfloor)(\lfloor \ell x_1\rfloor-\lfloor \ell x_2\rfloor+\lfloor \ell y \rfloor) (\lfloor \ell x_1\rfloor+\lfloor \ell x_2\rfloor-\lfloor \ell y \rfloor)}} \\
&\hspace{0.4cm} \times  \frac{\lfloor \ell x_1\rfloor+\lfloor \ell x_2\rfloor+\lfloor \ell y \rfloor}{(\lfloor \ell x_1\rfloor+\lfloor \ell x_2\rfloor+\lfloor \ell y \rfloor+1)^{\frac 3 2}}  \left( 1+\frac{1}{\lfloor \ell x_1\rfloor+\lfloor \ell x_2\rfloor+\lfloor \ell y \rfloor}  \right)^{-(\lfloor \ell x_1\rfloor+\lfloor \ell x_2\rfloor+\lfloor \ell y \rfloor)}\\
&\hspace{0.4cm} \times \frac{\ell^2}{\sqrt{(-\lfloor \ell x_3\rfloor+\lfloor \ell x_4\rfloor+\lfloor \ell y \rfloor)(\lfloor \ell x_3\rfloor-\lfloor \ell x_4\rfloor+\lfloor \ell y \rfloor) (\lfloor \ell x_3\rfloor+\lfloor \ell x_4\rfloor-\lfloor \ell y \rfloor)}} \\
&\hspace{0.4cm} \times  \frac{\lfloor \ell x_3\rfloor+\lfloor \ell x_4\rfloor+\lfloor \ell y \rfloor}{(\lfloor \ell x_3\rfloor+\lfloor \ell x_4\rfloor+\lfloor \ell y \rfloor+1)^{\frac 3 2}}  \left( 1+\frac{1}{\lfloor \ell x_3\rfloor+\lfloor \ell x_4\rfloor+\lfloor \ell y \rfloor}  \right)^{-(\lfloor \ell x_3\rfloor+\lfloor \ell x_4\rfloor+\lfloor \ell y \rfloor)}\\
&\hspace{0.4cm} \times \ind_{P_3}(x_1,x_2,y)  \ind_{P_3}(x_3,x_4,y)  dy d x_1\cdots d x_4\\
&= \lim_{\ell \to \infty} 4! 8 \pi^2 \ell^{-4 \alpha+8}  B_\ell^{-4}   \int_{\frac 1 2}^{\frac{2 \ell+1}{\ell}} \cdots \int_{\frac 1 2}^{\frac{2 \ell+1}{\ell}} \prod_{i=1}^4   b^2\left(\frac{\lfloor \ell x_i\rfloor }{\ell} \right)    \frac{2 \lfloor \ell x_i\rfloor  +1}{2 \ell} \\
&\hspace{0.4cm} \times  \left( \frac{ \lfloor \ell x_i\rfloor }{\ell} \right)^{-\alpha} \frac{G(\lfloor \ell x_i\rfloor )}{2 \pi}  \int_{0}^{\frac {4 \ell+1} \ell} \frac{2 \lfloor \ell y \rfloor +1}{2 \ell}   \frac{4 e^2}{\pi^2} \\
&\hspace{0.4cm} \times \frac{\ell^2}{\sqrt{(-\lfloor \ell x_1\rfloor+\lfloor \ell x_2\rfloor+\lfloor \ell y \rfloor)(\lfloor \ell x_1\rfloor-\lfloor \ell x_2\rfloor+\lfloor \ell y \rfloor) (\lfloor \ell x_1\rfloor+\lfloor \ell x_2\rfloor-\lfloor \ell y \rfloor)}} \\
&\hspace{0.4cm} \times  \frac{\lfloor \ell x_1\rfloor+\lfloor \ell x_2\rfloor+\lfloor \ell y \rfloor}{(\lfloor \ell x_1\rfloor+\lfloor \ell x_2\rfloor+\lfloor \ell y \rfloor+1)^{\frac 3 2}}  \left( 1+\frac{1}{\lfloor \ell x_1\rfloor+\lfloor \ell x_2\rfloor+\lfloor \ell y \rfloor}  \right)^{-(\lfloor \ell x_1\rfloor+\lfloor \ell x_2\rfloor+\lfloor \ell y \rfloor)}\\
&\hspace{0.4cm} \times \frac{\ell^2}{\sqrt{(-\lfloor \ell x_3\rfloor+\lfloor \ell x_4\rfloor+\lfloor \ell y \rfloor)(\lfloor \ell x_3\rfloor-\lfloor \ell x_4\rfloor+\lfloor \ell y \rfloor) (\lfloor \ell x_3\rfloor+\lfloor \ell x_4\rfloor-\lfloor \ell y \rfloor)}} \\
&\hspace{0.4cm} \times  \frac{\lfloor \ell x_3\rfloor+\lfloor \ell x_4\rfloor+\lfloor \ell y \rfloor}{(\lfloor \ell x_3\rfloor+\lfloor \ell x_4\rfloor+\lfloor \ell y \rfloor+1)^{\frac 3 2}}  \left( 1+\frac{1}{\lfloor \ell x_3\rfloor+\lfloor \ell x_4\rfloor+\lfloor \ell y \rfloor}  \right)^{-(\lfloor \ell x_3\rfloor+\lfloor \ell x_4\rfloor+\lfloor \ell y \rfloor)}\\
&\hspace{0.4cm} \times \ind_{P_3}(x_1,x_2,y)  \ind_{P_3}(x_3,x_4,y)  dy d x_1\cdots d x_4.
\end{align*}
Once again, applying dominated convergence and Lemma \ref{4},  we have the statement. \end{proof}

\section{Quantitative Central Limit Theorems for $\nu_{j;q}$} \label{TV}
We start by recalling that $H_q(\tilde{\beta}_j(x))$ belongs to
the $q$-th order Wiener chaos and so does the linear transform
$\nu_{j;q}$. Inside a fixed Wiener chaos it is possible to get
explicit estimates on the speed of convergence to the Gaussian law
for the Kolmogorov, Total Variation and Wasserstein distance by
applying Proposition \ref{N&P} and by explicitly relate norms of
Malliavin operators with moments and cumulants. In fact, for $\cal{N}$
standard Gaussian, we have
$$d \left(  \frac{\nu_{j;q}}{\sqrt{Var(\nu_{j;q})}}, {\cal N} \right) \le 2 \sqrt{\frac{q-1}{3 q} \frac{{cum}_4(\nu_{j;q})}{Var^2(\nu_{j;q})}},$$
where $d$ is the Kolmogorov, Total Variation or Wasserstein
distance and ${cum}_4$ is the forth-order cumulant of
$\nu_{j;q}$. See \cite{noupebook}, Theorem 5.2.6 for more
discusion and a full proof.

Quantitative Central Limit Theorems for $\nu_{j;q}$ then follow
easily from the results of Section \ref{variance} and by computing
the forth-order cumulant as in \cite{marinuccivadlamani} Section
5.1. The arguments are indeed quite standard but nevertheless 
 for completeness we report them below.

We start by expressing the $4$-th order cumulant as an integral over
$(S^2)^4$, using the well-known Diagram formula, see
\cite{marpecbook}, Proposition 4.15 for further details.

 Fix a set of integers $\alpha_1, \dots, \alpha_p$, a diagram is a graph with $\alpha_1$ vertexes labelled by $1$, $\alpha_2$ vertexes  labelled by $2$, $\dots$  $\alpha_p$ vertexes labelled by $p$, such that each vertex has degree $1$. We can view the vertexes as belonging to $p$ different rows and the edges may connect only vertexes with different labels, i.e. there are no flat edges on the same row. The set of such graphs that are connected (i.e. such that it is not possible to partition the vertexes into two subsets $A$ and $B$ such that no edge connect a vertex in $A$ with a vertex in $B$) is denoted by $\Gamma_c(\alpha_1,\dots, \alpha_p)$. Given a diagram $\gamma \in \Gamma_c$, $\eta_{ik}(\gamma)$  is the number of edges between the vertexes labelled by $i$ and the vertexes labelled by $k$ in $\gamma$. The following proposition holds:
\begin{proposition}[Diagram formula for Hermite polynomials] \label{cum}
Let $(Z_1,\dots, Z_p)$ be e centered Gaussian vector, and let $H_{l_1}, \dots,H_{l_p}$ be Hermite polynomials of degrees $l_1,\dots,l_p (\ge 1)$ respectively. Then
$${cum}(H_{l_1}(Z_1),\dots,H_{l_p}(Z_p))=\sum_{\gamma \in \Gamma_c(l_1,\dots,l_p)} \prod_{1 \le i \le j \le p} \{\mathbb{E}[Z_i Z_j]\}^{\eta_{ij}(\gamma)}.$$
\end{proposition}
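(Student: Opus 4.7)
\textbf{Proof plan for Proposition \ref{cum}.}

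The plan is to derive the diagram formula in two stages: first obtain the analogous formula for \emph{moments} as a sum over all (possibly disconnected) diagrams with no flat edges, and then pass to cumulants via the cumulant--moment inversion formula, whose Möbius coefficients will cancel every disconnected contribution.

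\emph{Stage 1 (moment formula).} Realise the Gaussian vector $(Z_1,\dots,Z_p)$ inside an isonormal Gaussian process $X$ over a real Hilbert space $\mathfrak{H}$, with $Z_i=X(h_i)$, $\|h_i\|_{\mathfrak{H}}=1$, and $\langle h_i,h_j\rangle_{\mathfrak{H}}=\mathbb{E}[Z_iZ_j]$. By identity (\ref{HI}), $H_{l_i}(Z_i)=I_{l_i}(h_i^{\otimes l_i})$. Iterating the product formula (\ref{P}) on $\prod_i I_{l_i}(h_i^{\otimes l_i})$ produces a sum of multiple stochastic integrals indexed by the data of how many pairings $r_{ij}$ are made between the $l_i$ ``slots'' of row $i$ and the $l_j$ ``slots'' of row $j$. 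Taking expectation annihilates every multiple integral of positive order through the isometry (\ref{EI}), so only the zero-order contractions survive. A careful count of the combinatorial multiplicities (the symmetrisations $\tilde\otimes_r$ precisely match the overcounting in choosing which indices to pair) gives
\begin{equation*}
\mathbb{E}\Bigl[\prod_{i=1}^p H_{l_i}(Z_i)\Bigr]=\sum_{\gamma\in\Gamma(l_1,\dots,l_p)}\prod_{1\le i<j\le p}\{\mathbb{E}[Z_iZ_j]\}^{\eta_{ij}(\gamma)},
\end{equation*}
where $\Gamma$ denotes \emph{all} diagrams with no flat edges; the absence of flat edges reflects the Wick-ordered character of the Hermite polynomials, which forbids self-contractions on a single row.

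\emph{Stage 2 (moments to cumulants).} Apply the cumulant--moment inversion
\begin{equation*}
\mathrm{cum}(Y_1,\dots,Y_p)=\sum_{\pi\in\mathcal{P}(\{1,\dots,p\})}(-1)^{|\pi|-1}(|\pi|-1)!\prod_{B\in\pi}\mathbb{E}\Bigl[\prod_{i\in B}Y_i\Bigr]
\end{equation*}
with $Y_i=H_{l_i}(Z_i)$. Expanding each block-moment through Stage 1 and regrouping the resulting sum according to the connected-component partition $\pi(\gamma)$ of the aggregate diagram yields
\begin{equation*}
\mathrm{cum}(Y_1,\dots,Y_p)=\sum_{\gamma\in\Gamma}\Bigl(\sum_{\pi\ge\pi(\gamma)}(-1)^{|\pi|-1}(|\pi|-1)!\Bigr)\prod_{1\le i<j\le p}\{\mathbb{E}[Z_iZ_j]\}^{\eta_{ij}(\gamma)}.
\end{equation*}
Identifying $(-1)^{|\pi|-1}(|\pi|-1)!$ with the Möbius function $\mu_{\mathcal P}(\pi,\hat 1)$ of the partition lattice and invoking the classical identity $\sum_{\pi\ge\sigma}\mu_{\mathcal P}(\pi,\hat 1)=\mathbf{1}_{\{\sigma=\hat 1\}}$, the bracket vanishes unless $\gamma$ is connected; what remains is exactly the sum over $\Gamma_c(l_1,\dots,l_p)$.

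\emph{Main obstacle.} The delicate step is the combinatorial cancellation in Stage 2: one must verify that every diagram with more than one connected component contributes zero, which rests on correctly identifying the induced partition $\pi(\gamma)$ and on the non-trivial Möbius identity above. Stage 1 is, by comparison, essentially bookkeeping once the product formula is in hand; the only care required is tracking the multiplicities produced by the symmetrised contractions $\tilde\otimes_r$ so that each admissible diagram contributes with multiplicity exactly one.
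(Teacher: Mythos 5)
Your argument is correct and is essentially the proof the paper defers to: the paper gives no proof of Proposition \ref{cum}, citing Peccati and Taqqu, Section 7.3, where exactly this two-stage route (the moment diagram formula obtained from the product formula (\ref{P}) and the isometry (\ref{EI}), followed by the Leonov--Shiryaev cumulant--moment inversion with the M\"obius cancellation removing all disconnected diagrams) is carried out. The only point worth making explicit is that the identity $H_{l_i}(Z_i)=I_{l_i}(h_i^{\otimes l_i})$ --- and indeed the proposition itself --- requires $\mathbb{E}[Z_i^2]=1$, a hypothesis your normalization $\|h_i\|_{\mathfrak H}=1$ correctly supplies and which holds in the paper's application to $Z_i=\tilde\beta_j(x_i)$.
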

\noindent for a proof see \cite{pectaq}, Section 7.3. 
\begin{theorem} \label{ccc}
For ${\cal N}$ standard Gaussian variable and for all $q$ such that $c_q>0$, as $j \to \infty$, we have that
\begin{align*}
d_{TV} \left(  \frac{\nu_{j;q}}{\sqrt{Var(\nu_{j,q})}}, {\cal N} \right),d_{W} \left(  \frac{\nu_{j;q}}{\sqrt{Var(\nu_{j,q})}}, {\cal N} \right) =
O(2^{-2j}).
\end{align*}
\end{theorem}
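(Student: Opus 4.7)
The strategy, spelled out in the passage immediately preceding the theorem, is to apply the Nourdin--Peccati fourth moment bound inside a fixed Wiener chaos. Since $H_q(\tilde\beta_j(\cdot))$ lies in $\mathcal{H}_q$ and the integral over $S^2$ preserves this chaos, $\nu_{j;q}\in\mathcal{H}_q$ and one has
$$d\!\left(\frac{\nu_{j;q}}{\sqrt{\mathrm{Var}(\nu_{j;q})}},\mathcal{N}\right)\le 2\sqrt{\frac{q-1}{3q}\,\frac{\mathrm{cum}_4(\nu_{j;q})}{\mathrm{Var}(\nu_{j;q})^{2}}}$$
for $d$ either the Total Variation or the Wasserstein distance. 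Theorem \ref{2} (together with its $q=2,3,4$ counterparts proved earlier in Section \ref{variance}) already yields $\mathrm{Var}(\nu_{j;q})\sim q!\,c_q\,2^{-2j}$, which is strictly positive thanks to the hypothesis $c_q>0$; hence the denominator is asymptotic to $(q!\,c_q)^2\,2^{-4j}$ and the task reduces to producing a sufficiently sharp upper bound on the fourth cumulant.

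The fourth cumulant is to be attacked through the diagram formula of Proposition \ref{cum}. Exchanging the cumulant and the integral (legitimate because $\nu_{j;q}$ lies in a finite chaos and the kernels are square integrable), one writes
\begin{align*}
\mathrm{cum}_4(\nu_{j;q})
&=\int_{(S^2)^4}\mathrm{cum}\!\bigl(H_q(\tilde\beta_j(x_1)),\dots,H_q(\tilde\beta_j(x_4))\bigr)\,d\sigma(x_1)\cdots d\sigma(x_4)\\
&=\sum_{\gamma\in\Gamma_c(q,q,q,q)}\int_{(S^2)^4}\prod_{1\le i<k\le 4}\tilde\rho_j(\langle x_i,x_k\rangle)^{\eta_{ik}(\gamma)}\,d\sigma(x_1)\cdots d\sigma(x_4).
\end{align*}
The family $\Gamma_c(q,q,q,q)$ is finite and independent of $j$, so it suffices to estimate each summand uniformly in $\gamma$. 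For any such diagram I would use the uncorrelation inequality (\ref{corrineq}) from Proposition \ref{con1}, bounding every factor $|\tilde\rho_j(\langle x_i,x_k\rangle)|$ by $C_M/(1+2^j d(x_i,x_k))^{M}$ with $M$ as large as needed, and then integrating the four variables in an order dictated by the combinatorial structure of $\gamma$. Connectedness of $\gamma$ guarantees that each of the three variables left free after an initial isotropy-based choice is linked to at least one already fixed point, so that every remaining integration is localised at scale $2^{-j}$ and contributes a factor of order $2^{-2j}$.

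When these factors are assembled and compared with the denominator $\mathrm{Var}(\nu_{j;q})^{2}\sim 2^{-4j}$, one obtains the claimed $O(2^{-2j})$ rate for both distances. The main obstacle is the diagram-by-diagram bookkeeping: one has to verify, for every connected $\gamma\in\Gamma_c(q,q,q,q)$, that the four successive integrations can be organised to extract the full expected decay, an accounting step which becomes combinatorially heavier when the edge multiplicities $\eta_{ik}$ are allowed to be large. The required computation is essentially identical to the one carried out in \cite{marinuccivadlamani}, Section~5.1, for a closely related family of needlet statistics; the only new input is the uniform bound (\ref{corrineq}) on the normalised correlation $\tilde\rho_j$, which is guaranteed by Condition \ref{con}.
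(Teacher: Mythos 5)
Your proposal follows essentially the same route as the paper's own proof: the fourth-moment bound $d\le 2\sqrt{\tfrac{q-1}{3q}\,\mathrm{cum}_4(\nu_{j;q})/\mathrm{Var}^2(\nu_{j;q})}$, the diagram formula for $\mathrm{cum}_4$, the uncorrelation inequality (\ref{corrineq}) applied edge by edge, and three successive localised integrations (each contributing $2^{-2j}$, after fixing one point at the pole by isotropy) to get $\mathrm{cum}_4(\nu_{j;q})=O(2^{-6j})$ against $\mathrm{Var}^2(\nu_{j;q})\asymp 2^{-4j}$. Your spanning-tree phrasing of the integration order is if anything slightly cleaner than the paper's use of a fixed $4$-cycle of edges, but the argument is the same.
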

\begin{proof} In view of
Proposition \ref{cum}, for $p=4$ and $l_1=\cdots=l_4=q$, we obtain
\begin{align*}
 {cum}_4[\nu_{j;q}]&={cum}_4 \left[ \int_{S^2} H_q(\tilde{\beta}_j(x_1)) d \sigma(x_1) \cdots \int_{S^2} H_q(\tilde{\beta}_j(x_4)) d \sigma(x_4) \right]\\
&=\int_{(S^2)^4} {cum}_4[H_q(\tilde{\beta}_j(x_1)) \cdots H_q(\tilde{\beta}_j(x_4)) ] \; d\sigma(x_1) \cdots d\sigma(x_4)  \\
&=\int_{(S^2)^4} \sum_{\gamma \in \Gamma_c(q,q,q,q)} \prod_{(i,k) \in \gamma}\{\mathbb{E}[\tilde{\beta}_j(x_i)  \tilde{\beta}_j(x_k) ]\}^{\eta_{ik}(\gamma)} \; d \sigma(x_1) \cdots d \sigma(x_4)  \\
&=\frac{1}{B_j^{2q}}\int_{(S^2)^4} \sum_{\gamma \in \Gamma_c(q,q,q,q)} \prod_{(i,k) \in \gamma}\{\mathbb{E}[{\beta}_j(x_i)  {\beta}_j(x_k) ]\}^{\eta_{ik}(\gamma)} \; d \sigma(x_1) \cdots d \sigma(x_4),
\end{align*}
since $\sum_{(i,k) \in \gamma} \eta_{ik}(\gamma)=2 q$.  Now we apply formula (\ref{mu1}) and we obtain
\begin{align*}
& \hspace{3cm} {cum}_4[ \nu_{j;q}]\\
&\le \frac{1}{B_j^{2q}}  \sum_{\gamma \in \Gamma_c(q,q,q,q)} \int_{(S^2)^4} \prod_{(i,k) \in \gamma} \left\{ \frac{C_M}{(1+2^j d( x_i , x_k  ))^M}   \sum_{l} b^2(\frac{l}{2^j})  C_l  \frac{2l+1}{4 \pi} \right\}^{\eta_{ik}(\gamma) } \; d \sigma(x_1) \cdots d \sigma(x_4)  \\
&=  \tilde{C}_M ^{2 q}  \sum_{\gamma \in \Gamma_c(q,q,q,q)} \int_{(S^2)^4} \prod_{(i,k) \in \gamma}   \frac{1}{(1+2^j d( x_i , x_k  ))^{M {\eta_{ik}(\gamma)}} }   \; d \sigma(x_1) \cdots d \sigma(x_4).
\end{align*}
To compute the integral we note that for spherical symmetry we can assume without loss of generality that e.g. $x_3$ is the North Pole denoted by $p_N$, and we get
\begin{align*}
&\int_{(S^2)^4} \prod_{(i,k) \in \gamma}   \frac{1}{(1+2^j d( x_i , x_k  ))^{M {\eta_{ik}(\gamma)}} }   \; d\sigma(x_1) \cdots d \sigma(x_4)\\
&\le  \int_{(S^2)^4}   \frac{1}{(1+2^j d( x_1 , x_2 ))^M }     \frac{1}{(1+2^j d( x_2 , x_3))^M }   \frac{1}{(1+2^j d(x_3 , x_4 ))^M}   \frac{1}{(1+2^j d(x_1 , x_4 ))^M}   \; d \sigma(x_1) \cdots d \sigma(x_4)  \\
&\le 4 \pi  \int_{(S^2)^3}   \frac{1}{(1+2^j d( x_1 , x_2 ))^M }     \frac{1}{(1+2^j d( x_2 , p_N))^M }   \frac{1}{(1+2^j d(p_N , x_4 ))^M}     \; d\sigma(x_1) d \sigma(x_2)  d \sigma(x_4) \\
&\le 4 \pi C 2^{-2j} \int_{(S^2)^2}   \frac{1}{(1+2^j d( x_1 , x_2 ))^M }     \frac{1}{(1+2^j d( x_2 , p_N))^M }        \; d \sigma(x_1) d \sigma(x_2)   \\
&\le \text{const }2^{-6j}
\end{align*}
since, for example, for $M>2$
\begin{align*}
\int_{S^2}  \frac{1}{(1+2^j d(p_N , x_4 ))^M} d\sigma(x_4)&=\int_{0}^{2 \pi} d \phi \int_{0}^\pi \frac{\theta \sin \theta}{(1+2^j  \theta)^M} d \theta \le 2 \pi  \int_{0}^\infty \frac{\theta}{(1+2^j  \theta)^M} d \theta\\
&=2 \pi \left[\int_{0}^{2^{-j}} \frac{\theta}{(1+2^j  \theta)^M} d \theta+\int_{2^{-j}}^\infty \frac{\theta}{(1+2^j  \theta)^M} d \theta  \right]\\
&\le 2 \pi \left[\int_{0}^{2^{-j}} \theta d \theta+ 2^{-j M} \int_{2^{-j}}^\infty \theta^{1-M}  d \theta  \right]\\
&=2 \pi \left[ 2^{-1-2j}+ \frac{2^{-2j}}{M-2} \right] \le \text{const } 2^{-2j}.
\end{align*}
\end{proof}

\section{A Quantitative Central Limit Theorem for the empirical measure}

In the next  theorem we obtain a bound on the Wasserstein distance for the speed of convergence of ${{\Phi}_j(z)}$ to the Gaussian law.

\begin{theorem} \label{WD}
For $\cal N$ standard Gaussian, as  $j \to \infty$ we have
$$d_W \left( \frac{{\Phi}_j(z)}{\sqrt{\text{Var} [{\Phi}_j(z)]}}, \cal{N}  \right)=O\left(\frac{1}{\sqrt[4]{j}} \right).$$
\end{theorem}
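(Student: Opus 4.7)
The plan is to leverage the $L^2$-expansion
$$\ind_{\{\tilde{\beta}_j(x)\le z\}}=\sum_{q=0}^\infty \frac{\mathcal{J}_q(z)}{q!} H_q(\tilde{\beta}_j(x))$$
to write $\Phi_j(z)-\mathbb{E}[\Phi_j(z)]=\sum_{q=1}^\infty \frac{\mathcal{J}_q(z)}{q!}\nu_{j;q}$. Since distinct needlets polyspectra live in distinct Wiener chaoses, orthogonality gives the termwise variance decomposition, and Theorem \ref{2} yields $\mathrm{Var}[\Phi_j(z)]\sim 2^{-2j}\sum_{q\ge 1}\frac{\mathcal{J}_q^2(z)}{q!}c_q$.

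Following Pham \cite{V-HP}, I would truncate the Hermite series at a level $N=N(j)$ to be chosen later. Setting $F_j:=\Phi_j(z)/\sqrt{\mathrm{Var}[\Phi_j(z)]}$ and $F_j^{(N)}:=\sum_{q=1}^N \frac{\mathcal{J}_q(z)}{q!}\nu_{j;q}/\sqrt{\mathrm{Var}[\Phi_j(z)]}$, the inequality
$$d_W(F_j,\mathcal{N})\le \|F_j-F_j^{(N)}\|_{L^2}+d_W(F_j^{(N)},\mathcal{N})$$
splits the proof into a tail estimate in $L^2$ and a quantitative CLT for a finite sum of multiple integrals. The first is purely spectral: chaos orthogonality together with the asymptotics $\mathrm{Var}(\nu_{j;q})\sim q!c_q 2^{-2j}$ and the identity $\mathcal{J}_q(z)=-H_{q-1}(z)\phi(z)$ reduce it to an $N$-dependent quantity that decays uniformly in $j$ at a rate dictated by the growth of $c_q/q!$.

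For the second term I would apply Proposition \ref{N&P} and expand $\langle DF_j^{(N)},-DL^{-1}F_j^{(N)}\rangle_{\mathfrak{H}}$ via the product formula (\ref{P}) and the derivative identity (\ref{DI}). This produces diagonal contributions equal to fourth cumulants $\mathrm{cum}_4(\nu_{j;q})$, which are controlled by the argument already used in Theorem \ref{ccc} and give $O(2^{-2j})$ per chaos, together with off-diagonal cross-contractions between chaoses of different orders. The latter are handled by the diagram formula (Proposition \ref{cum}) combined with the needlet localization inequality (\ref{corrineq}): each integral over $(S^2)^4$ of a product of four correlation kernels yields $O(2^{-6j})$, which, when weighed against the normalization factor $2^{2j}$, preserves the $2^{-2j}$ scaling for each fixed pair of chaos orders.

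The main obstacle is tracking the joint $q$- and $N$-dependence of the constants appearing in the variance asymptotics, the localization inequality, and the diagram enumeration: the number of diagrams and contractions grows polynomially in $N$, so one has to verify that all constants are uniform enough that the Stein bound remains of the form $\mathrm{poly}(N)\cdot 2^{-2j}$ rather than blowing up exponentially in $N$. Once this uniformity is established, balancing the polynomial-in-$N$ tail estimate against the Stein bound -- which, through the normalization, becomes visible at a much smaller order -- through the choice $N\sim\sqrt{j}$ produces the announced $O(j^{-1/4})$ rate.
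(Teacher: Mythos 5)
Your skeleton is exactly the paper's: expand $\Phi_j(z)$ in the Hermite/Wiener chaos basis, truncate at level $N=N(j)$, bound the tail in $L^2$ via chaos orthogonality and the asymptotics of ${\rm Var}(\nu_{j;q})$, and apply the Stein--Malliavin bound (Proposition \ref{N&P}) to the truncated sum via the product formula and the contraction estimates $\|g_{q,j}\otimes_r g_{q',j}\|^2\le {\rm const}\,2^{-6j}$. Two points in your write-up, however, do not survive scrutiny, and the second one breaks the stated rate.

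First, the combinatorial constants are \emph{not} polynomial in $N$: summing $(r-1)!^2\binom{q-1}{r-1}^2\binom{q'-1}{r-1}^2(q+q'-2r)!$ over $r$ gives $(q-1)!(q'-1)!\,3^{q+q'-2}$, and after weighting by ${\cal J}_q(z)/q!\asymp (q!)^{-1/2}q^{-3/4}$ the double sum over $q,q'\le N$ grows like $3^N$. This is harmless only because the Stein term carries the factor $2^{2j}\cdot 2^{-3j}=2^{-j}$, so it reads ${\rm const}\,3^N 2^{-j}$ and forces $N\le cj$ with $c<\log 2/\log 3$ --- it does not permit $N$ to grow faster than linearly in $j$. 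Second, and more seriously, your balancing is wrong: the $L^2$ tail is $\bigl\{\sum_{q>N}q^{-3/2}\,2^{2j}q!^{-1}\mathbb{E}[\nu_{j;q}^2]\bigr\}^{1/2}=O(N^{-1/4})$, so the choice $N\sim\sqrt{j}$ yields only $O(j^{-1/8})$, not the claimed $O(j^{-1/4})$. To get $j^{-1/4}$ you must take $N$ proportional to $j$ (the paper takes $N=\log(2^j)/2$), which is exactly compatible with the exponential constant since $3^{cj}2^{-j}\to 0$ for small enough $c$; under your (incorrect) ${\rm poly}(N)2^{-2j}$ reading one would instead take $N$ nearly exponential in $j$, so $N\sim\sqrt j$ is not the optimizer of either bound. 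A last minor point: your two-term triangle inequality compares the truncated, $\sqrt{{\rm Var}[\Phi_j]}$-normalized sum (which has variance $\sigma_N^2\ne1$) directly to a standard Gaussian; you need the third term $d_W({\cal N}(0,\sigma_N^2),{\cal N})=O(|1-\sigma_N^2|)=O(N^{-1/2})$, which is dominated but must be accounted for before Proposition \ref{N&P} applies.
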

\noindent We start by proving the following lemma.
\begin{lemma} \label{lemma-lemma}
For integers $q, q' \ge 2$ we have that
\begin{align*}
\mathbb{E}[(\langle D  \nu_{j;q},-DL^{-1} \nu_{j;q'}  \rangle_{\mathfrak H} )^2]&\le \text{const } 2^{-6j}  q^2 \sum_{r=1}^{q \wedge q' } (r-1)!^2 \binom{q-1}{r-1}^2 \binom{q'-1}{r-1}^2 (q+q'-2r)!, \\
\text{Var}[\langle D  \nu_{j;q},-DL^{-1}  \nu_{j;q}
\rangle_{\mathfrak H} ]&\le \text{const } 2^{-6j} q^2
\sum_{r=1}^{q-1 } (r-1)!^2 \binom{q-1}{r-1}^4  (2q-2r)!.
\end{align*}
\end{lemma}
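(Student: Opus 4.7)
The plan is to recognize that each $\nu_{j;q}$ is a multiple Wiener--It\^o integral with an explicit kernel, so that the inner product $\langle D \nu_{j;q}, -DL^{-1}\nu_{j;q'}\rangle_{\mathfrak{H}}$ can be expanded via the product formula (\ref{P}) into a finite sum of multiple integrals, after which the isometry (\ref{EI}) reduces everything to bounding $L^2$-norms of contractions of kernels. First I would use (\ref{HI}) together with the isonormal representation $\tilde\beta_j(x) = X(\tilde\Theta_j(\langle x,\cdot\rangle))$ and stochastic Fubini to write $\nu_{j;q} = I_q(f_{j;q})$ with symmetric kernel
\[
f_{j;q}(y_1,\dots,y_q) := \int_{S^2} \prod_{k=1}^q \tilde\Theta_j(\langle x, y_k\rangle)\, d\sigma(x).
\]

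Next, using (\ref{DI}) we have $D\nu_{j;q}(a) = q\, I_{q-1}(f_{j;q}(\cdot,a))$ and $-DL^{-1}\nu_{j;q'}(a) = I_{q'-1}(f_{j;q'}(\cdot,a))$. Multiplying pointwise via (\ref{P}), then integrating in $a \in S^2$ and exchanging the contraction with the integral, yields
\[
\langle D \nu_{j;q}, -DL^{-1}\nu_{j;q'}\rangle_{\mathfrak{H}} = q\sum_{r=1}^{q\wedge q'}(r-1)!\binom{q-1}{r-1}\binom{q'-1}{r-1} I_{q+q'-2r}\bigl(f_{j;q}\,\tilde\otimes_r\, f_{j;q'}\bigr).
\]
The chaos orthogonality (\ref{EI}), together with the contraction estimate $\|f_{j;q}\,\tilde\otimes_r\, f_{j;q'}\|\le \|f_{j;q}\,\otimes_r\, f_{j;q'}\|$, reduces the problem to uniformly bounding each un-symmetrized contraction norm by $O(2^{-6j})$; the second claim on $\mathrm{Var}[\langle D\nu_{j;q},-DL^{-1}\nu_{j;q}\rangle_{\mathfrak{H}}]$ follows because the $r=q$ term gives $I_0\bigl(f_{j;q}\tilde\otimes_q f_{j;q}\bigr)=q!\|f_{j;q}\|^2 = \mathbb{E}[\nu_{j;q}^2]$, a deterministic constant that is subtracted when passing from second moment to variance, leaving only $1\le r\le q-1$.

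The core analytic step is the evaluation of $\|f_{j;q}\otimes_r f_{j;q'}\|^2$. Using the reproducing identity $\int_{S^2}\tilde\Theta_j(\langle x,y\rangle)\tilde\Theta_j(\langle y,x'\rangle)\,d\sigma(y) = \tilde\rho_j(\langle x,x'\rangle)$ from (\ref{Theta}), direct computation (Fubini plus identification of each $y_i$- and $z_i$-integral with a $\tilde\rho_j$-factor) produces, for $1\le r\le (q-1)\wedge(q'-1)$,
\[
\|f_{j;q}\otimes_r f_{j;q'}\|^2 = \int_{(S^2)^4} [\tilde\rho_j(\langle x_1,x_2\rangle)]^r[\tilde\rho_j(\langle x_3,x_4\rangle)]^r[\tilde\rho_j(\langle x_1,x_3\rangle)]^{q-r}[\tilde\rho_j(\langle x_2,x_4\rangle)]^{q'-r}\prod_{i=1}^4 d\sigma(x_i).
\]
Since $|\tilde\rho_j|\le 1$, one can reduce each exponent to $1$, exposing a $4$-cycle of correlation kernels $(x_1,x_2),(x_2,x_4),(x_4,x_3),(x_3,x_1)$; Proposition \ref{con1} then upper bounds each factor by $C_M/(1+2^j d(\cdot,\cdot))^M$, and integrating successively along the cycle (as in the cumulant computation of Theorem \ref{ccc}) contributes three factors $O(2^{-2j})$ plus one factor of $O(1)$ from the total volume, yielding the bound $O(2^{-6j})$.

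Assembling the pieces gives the stated double-sum with the universal factor $2^{-6j}$. The main obstacle will be the boundary case $r = q\wedge q'$ in the mixed statement $q\ne q'$: say $r=q<q'$, in which case the contraction loses the $(x_1,x_3)$-edge and the $4$-cycle degenerates; I would handle this separately by integrating $x_1$ against $[\tilde\rho_j(x_1,x_2)]^q$, $x_3$ against $[\tilde\rho_j(x_3,x_4)]^q$, and then $x_2$ against $[\tilde\rho_j(x_2,x_4)]^{q'-q}$, each producing $O(2^{-2j})$ by Proposition \ref{con1}, leaving a trivial $d\sigma(x_4)$-integral. The secondary technical point is the interchange of $L^2(d\sigma)$ integration with the stochastic integration operator when both representing $\nu_{j;q}$ as $I_q(f_{j;q})$ and identifying the Malliavin derivative kernels; this is justified by the uniform $L^2$-integrability coming from the explicit, smooth, and compactly supported definition of $\tilde\Theta_j$.
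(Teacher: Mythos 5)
Your proposal follows essentially the same route as the paper: representing $\nu_{j;q}=I_q(g_{q,j})$ with the kernel $g_{q,j}(y_1,\dots,y_q)=\int_{S^2}\prod_i\tilde\Theta_j(\langle x,y_i\rangle)\,d\sigma(x)$, expanding $\langle D\nu_{j;q},-DL^{-1}\nu_{j;q'}\rangle_{\mathfrak H}$ via the product formula, reducing to unsymmetrized contraction norms by isometry, writing $\|g_{q,j}\otimes_r g_{q',j}\|^2$ as a four-fold integral of powers of the correlation $\tilde\rho_j$, and extracting $O(2^{-6j})$ from three successive applications of the decay estimate of Proposition \ref{con1}. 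The argument is correct, and your explicit treatment of the boundary term $r=q\wedge q'$ (where the four-cycle degenerates) is a welcome detail that the paper's write-up passes over.
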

\begin{proof}
Since $H_q(\tilde{\beta}_j(x))$ is in the $q$-th order Wiener chaos, from (\ref{HI}), we obtain
\begin{align*}
 \nu_{j;q}&=\int_{S^2} d x \int_{(S^2)^q} \prod_{i=1}^q {\tilde \Theta}_j (\langle x,y_i \rangle) W(d \sigma(y_i)) =  \int_{(S^2)^q} g_{q,j} (y_1,\dots,y_{q}) W(d \sigma(y_1)) \cdots W(d \sigma(y_q))\\
&=I_q( g_{q,j} (y_1,\dots,y_{q})),
\end{align*}
where
\begin{align*}
 g_{q,j} (y_1,\dots,y_{q}) = \int_{S^2}  \prod_{i=1}^q {\tilde \Theta}_j (\langle x,y_i \rangle)  d \sigma(x)
\end{align*}
and, from formula (\ref{DI}), 
\begin{align*}
D  \nu_{j;q}=\frac{q!}{(q-1)!} I_{q-1} ( g_{q,j}
(y_1,\dots,y_{q-1},z))=q \int_{(S^2)^{q-1}}
g_{q,j}(y_1,\dots,y_{q-1},z) W(d \sigma(y_1)) \dots W(d \sigma(y_{q-1})).
\end{align*}
Applying the definition  of the pseudo-inverse of $L$, we obtain
\begin{align*}
\langle D  \nu_{j;q},-DL^{-1} \nu_{j;q'})  \rangle_{\mathfrak H}&= \frac{1}{q'} \langle D  \nu_{j;q},D \nu_{j;q'}  \rangle_{\mathfrak H}\\
&=q \langle I_{q-1}(g_{q,j}(y_1,\dots,y_{q-1},z)), I_{q'-1}(g_{q',j}(y_1,\dots,y_{q'-1},z)) \rangle_{\mathfrak H}\\
&=q \int_{S^2}  I_{q-1} ( g_{q,j} (y_1,\dots,y_{q-1},z)) I_{q'-1} ( g_{q',j} (y_1,\dots,y_{q'-1},z)) d\sigma(z)
\end{align*}
and by the multiplication formula (\ref{P}) 
\begin{align*}
& \hspace{3cm} \langle D  \nu_{j;q},-DL^{-1} \nu_{j;q'}  \rangle_{\mathfrak H}\\
&=q \sum_{r=0}^{q \wedge q'-1 } r! \binom{q-1}{r} \binom{q'-1}{r} \int_{S^2} I_{q+q'-2-2 r}(g_{q,j} (y_1,\dots,y_{q-1},z) {\tilde{\otimes}_r} g_{q',j} (y_1,\dots,y_{q'-1},z) ) d\sigma(z)\\
&=q \sum_{r=0}^{q \wedge q'-1 } r! \binom{q-1}{r} \binom{q'-1}{r} I_{q+q'-2-2 r}(g_{q,j} (y_1,\dots,y_q) {\tilde{\otimes}_{r+1}} g_{q',j} (y_1,\dots,y_{q'})) \\
&=q \sum_{r=1}^{q \wedge q' } (r-1)! \binom{q-1}{r-1} \binom{q'-1}{r-1} I_{q+q'-2 r}(g_{q,j} (y_1,\dots,y_q) {\tilde{\otimes}_{r}}g_{q',j} (y_1,\dots,y_{q'}) ).
\end{align*}
From the isometry property (\ref{EI}) we have
\begin{align*}
& \hspace{3cm} \mathbb{E}[(\langle D  \nu_{j;q},-DL^{-1} \nu_{j;q'}  \rangle_{\mathfrak H} )^2]\\
&=q^2 \sum_{r=1}^{q \wedge q' } (r-1)!^2 \binom{q-1}{r-1}^2 \binom{q'-1}{r-1}^2 (q+q'-2r)! ||g_{q,j} (y_1,\dots,y_q) {\tilde{\otimes}_{r}}g_{q',j} (y_1,\dots,y_{q'}) ||^2_{{\mathfrak H}^{\otimes q+q'-2r}}\\
&\le q^2 \sum_{r=1}^{q \wedge q' } (r-1)!^2 \binom{q-1}{r-1}^2 \binom{q'-1}{r-1}^2 (q+q'-2r)! ||g_{q,j} (y_1,\dots,y_q) {{\otimes}_{r}}g_{q',j} (y_1,\dots,y_{q'}) ||^2_{{\mathfrak H}^{\otimes q+q'-2r}}.
\end{align*}
Applying Lemma 6.2.1 in \cite{noupebook}, we write
\begin{align*}
& \hspace{3cm} \text{Var}[\langle D  \nu_{j;q},-DL^{-1} \nu_{j,q}  \rangle_{\mathfrak H} ]\\
&\le q^2 \sum_{r=1}^{q-1 } (r-1)!^2 \binom{q-1}{r-1}^4  (2q-2r)!
||g_{q,j} (y_1,\dots,y_q) {{\otimes}_{q-r}}g_{q,j}
(y_1,\dots,y_{q}) ||^2_{{\mathfrak H}^{\otimes 2r}}.
\end{align*}
We determine now the explicit form for the contractions:
\begin{align*}
& \hspace{3cm} g_{q,j} (y_1,\dots,y_q) {{\otimes}_{r}}g_{q',j} (y_1,\dots,y_{q'})\\
&=\int_{(S^2)^r} g_{q,j} (y_1,\dots,y_{q-r},t_1,\dots,t_r) g_{q',j} (y_{q-r+1},\dots,y_{q+q'-2r},t_1,\dots,t_r) d \sigma(t_1) \dots d \sigma(t_r) \\
&= \int_{(S^2)^r}  [\int_{S^2} \prod_{n=1}^{q-r} {\tilde \Theta}_j(\langle x_1,y_n \rangle)   \prod_{i=1}^{r} {\tilde \Theta}_j(\langle x_1,t_i \rangle)  d \sigma(x_1)] \\
& \;\; \times [\int_{S^2} \prod_{m=q-r+1}^{q+q'-2r} {\tilde \Theta}_j(\langle x_2,y_m \rangle)   \prod_{i=1}^{r} {\tilde \Theta}_j(\langle x_2,t_i \rangle)  d \sigma(x_2)] d \sigma(t_1) \dots d \sigma(t_r) \\
&=B_j^{-\frac{q+q'}{2}}  \int_{(S^2)^2} d \sigma(x_1) d \sigma(x_2) \prod_{n=1}^{q-r} { \Theta}_j(\langle x_1,y_n \rangle)  \prod_{m=q-r+1}^{q+q'-2r} { \Theta}_j(\langle x_2,y_m \rangle)  \\
& \;\; \times \int_{(S^2)^r}  \prod_{i=1}^{r} { \Theta}_j(\langle x_1,t_i \rangle)  { \Theta}_j(\langle x_2,t_i \rangle) d\sigma(t_1) \dots d \sigma(t_r) \\
&=B_j^{-\frac{q+q'}{2}} \int_{(S^2)^2} d\sigma(x_1) d \sigma(x_2) \prod_{n=1}^{q-r} { \Theta}_j(\langle x_1,y_n \rangle)  \prod_{m=q-r+1}^{q+q'-2r} { \Theta}_j(\langle x_2,y_m \rangle)   \rho^r_j(\langle x_1, x_2 \rangle),
\end{align*}
for $\Theta_j$ and $\rho_j$ as in (\ref{Psi}) and (\ref{Theta}). It follows that
\begin{align*}
&||g_{q,j} (y_1,\dots,y_q) {{\otimes}_{r}}g_{q',j} (y_1,\dots,y_{q'}) ||^2_{{\mathfrak H}^{\otimes q+q'-2r}}\\
&=B_j^{-(q+q')}\int_{(S^2)^{q+q'-2r}} d \sigma(y_1) \cdots  d \sigma(y_{q+q'-2r}) [\int_{(S^2)^4}   \prod_{n=1}^{q-r} { \Theta}_j(\langle x_1,y_n \rangle)  { \Theta}_j(\langle x_3,y_n \rangle) \\
&\;\;\; \times  \prod_{m=q-r+1}^{q+q'-2r} { \Theta}_j(\langle x_2,y_m \rangle)  { \Theta}_j(\langle x_4,y_m \rangle) 
  \rho^r_j(\langle x_1, x_2 \rangle)   \rho^r_j(\langle x_3, x_4 \rangle) d\sigma(x_1) \cdots  d\sigma(x_4)]\\
&= B_j^{-(q+q')} \int_{(S^2)^4}   \rho^{q-r}_j(\langle x_1, x_3 \rangle)   \rho^{q'-r}_j(\langle x_2, x_4 \rangle)   \rho^r_j(\langle x_1, x_2 \rangle)   \rho^r_j(\langle x_3, x_4 \rangle)   d\sigma(x_1) \cdots d\sigma(x_4).
\end{align*}
Since $\rho_j(\langle x, y \rangle)\le B_j$ and from (\ref{mu1})
$$\int_{S^2} \rho^p_j(\langle x, y \rangle) d \sigma(x) \le \int_{S^2} \left(  \frac{C_M}{(1+2^j d(x,y))^M} B_j \right)^p d \sigma(x)  \le B_j^p C_M^p 2^{-2j},$$
we have
\begin{align*}
&||g_{q,j} (y_1,\dots,y_q) {{\otimes}_{r}}g_{q',j} (y_1,\dots,y_{q'}) ||^2_{{\mathfrak H}^{\otimes q+q'-2r}}\\
&= B_j^{-(q+q')} B_j^r \int_{(S^2)^4}   \rho^{q-r}_j(\langle x_1, x_3 \rangle)   \rho^{q'-r}_j(\langle x_2, x_4 \rangle)   \rho^r_j(\langle x_1, x_2 \rangle)     d\sigma(x_1) \cdots d \sigma(x_4)\\
& \le \text{const } 2^{-6j},
\end{align*}
and analogously
\begin{align*}
&||g_{q,j} (y_1,\dots,y_q) {{\otimes}_{q-r}}g_{q,j} (y_1,\dots,y_{q}) ||^2_{{\mathfrak H}^{\otimes 2r}}\\
&= B_j^{-2q} \int_{(S^2)^4}   \rho^{q-r}_j(\langle x_1, x_3 \rangle)   \rho^{q-r}_j(\langle x_2, x_4 \rangle)   \rho^r_j(\langle x_1, x_2 \rangle)   \rho^r_j(\langle x_3, x_4 \rangle)   d\sigma(x_1) \cdots d \sigma(x_4) \\ &  \le \text{const } 2^{-6j}.
\end{align*}
\end{proof}

\begin{proof} [{\it Proof of Theorem \ref{WD}}]
Let us introduce the following notation:
\begin{align*}
\tilde{\Phi}_{j,N}(z)&=2^{j} \sum_{q=2}^N \frac{{\cal J}_q(z)}{q!}
 \nu_{j;q} , \hspace{1cm}\sigma_N^2=\frac{\text{Var} [{\tilde
\Phi}_{j,N}(z)]}{\text{Var} [{2^j \Phi}_{j}(z)]}, \hspace{1cm}
{\cal N}_{N} \sim {\cal N} \left(0, \sigma_N^2 \right).
\end{align*}
We have that
\begin{align*}
d_W \left( \frac{{ \Phi}_j(z)}{\sqrt{\text{Var} [{ \Phi}_j(z)]}}, {\cal N}  \right)&\le d_W \left( \frac{{ \Phi}_j(z)}{\sqrt{\text{Var} [{ \Phi}_j(z)]}}, \frac{{\tilde \Phi}_{j,N}(z)}{\sqrt{\text{Var} [{2^j \Phi}_j(z)]}}  \right) \\
&\;\;+  d_W \left( \frac{{\tilde \Phi}_{j,N}(z)}{\sqrt{\text{Var} [{2^j \Phi}_j(z)]}} ,{\cal N}_N \right)+d_W \left({\cal N}_N ,{\cal N} \right).
\end{align*}
\begin{itemize}
\item {For the first term we apply the properties of the Wasserstein distance to get:}
\end{itemize}
\begin{align*}
&d_W \left( \frac{{ \Phi}_j(z)}{\sqrt{\text{Var} [{ \Phi}_j(z)]}}, \frac{{\tilde \Phi}_{j,N}(z)}{\sqrt{\text{Var} [{2^j \Phi}_j(z)]}}  \right)\\
&\;\;\; \le  \left\{  \mathbb{E}\left[  \frac{{ \Phi}_j(z)}{\sqrt{\text{Var} [{\Phi}_j(z)]}}- \frac{{\tilde \Phi}_{j,N}(z)}{\sqrt{\text{Var} [{2^j \Phi}_j(z)]}}   \right]^2 \right\}^{1/2}\\
&\;\;\; =\frac{1}{\sqrt{\text{Var} [{2^j \Phi}_j(z)]}}  \left\{ \mathbb{E} \left[  2^j \int_{S^2} \sum_{q=N+1}^\infty  \frac{{\cal J}_q(z)}{q!} H_q(\tilde{\beta}_j(x))  d \sigma(x) \right]^2  \right\}^{1/ 2},
\end{align*}
and since ${ 2^j \Phi}_j(z)-{\tilde \Phi}_{j,N}(z)$
belongs to the Hilbert space of Gaussian subordinated random
variables, with continuous inner product $\langle X,Y \rangle:= \mathbb{E}[X
Y]$, we have
\begin{align*}
d_W \left( \frac{{ \Phi}_j(z)}{\sqrt{\text{Var} [{
\Phi}_j(z)]}}, \frac{{\tilde \Phi}_{j,N}(z)}{\sqrt{\text{Var}
[{2^j \Phi}_j(z)]}}  \right) &  \le
\frac{1}{\sqrt{\text{Var}[{2^j \Phi}_j(z)]}}  \left\{
\sum_{q=N+1}^\infty \frac{{\cal J}_q^2(z)}{(q!)^2} 2^{2j}
\mathbb{E}[ \nu_{j;q}^2]  \right\}^{1/2}.
\end{align*}
Since for any finite $z$, as $q \to \infty$, the asymptotic formula  $e^{-\frac{z^2}{4}} H_q(z) \le \text{const } q^{\frac q 2} e^{- \frac{q}{2}}$ holds (see e.g. \cite{lebedev} formula (4.14.9)), by applying the Stirling's approximation to the factorial $(q-1)!$ we have (see \cite{V-HP}), 
$$\frac{{\cal J}_q^2(z)}{q!}=\frac{\phi(z) }{q!} [e^{-\frac {z^2} 4} H_{q-1}(z)]^2  \le \text{const }  \frac{\phi(z) }{q \sqrt{ q-1}}.$$
From this we obtain the first bound, in fact form Theorem \ref{00}, we have
\begin{align*}
d_W \left( \frac{{ \Phi}_j(z)}{\sqrt{\text{Var} [{ \Phi}_j(z)]}}, \frac{{\tilde \Phi}_{j,N}(z)}{\sqrt{\text{Var} [{2^j \Phi}_j(z)]}}  \right) &\le \text{const }  \left\{  \sum_{q=N+1}^\infty \frac{1}{q \sqrt{ q-1}} \frac{2^{2j}}{q!}  \mathbb{E}[ \nu_{j;q}^2] \right\}^{1/2}\\
&\le \text{const } N^{-
\frac 1 4} .
\end{align*}
\begin{itemize}
\item {To bound the second term, we apply now Proposition \ref{N&P} and we get}
\end{itemize}
\begin{align*}
& \hspace{3cm}d_W \left( \frac{{\tilde \Phi}_{j,N}(z)}{\sqrt{\text{Var} [{2^j \Phi}_j(z)]}} ,{\cal N}_N\right) \\
&\;\;\; \le \frac{\sqrt{2}}{\sigma_N \sqrt{\pi}} \frac{1}{\text{Var}[{2^j \Phi}_{j}(z)] } \mathbb{E} [|\text{Var}[{\tilde \Phi}_{j,N}(z)] - \langle D {\tilde \Phi}_{j,N}(z),-DL^{-1} {\tilde \Phi}_{j,N}(z)  \rangle_{\mathfrak H} |].
\end{align*}
Since, in view of (\ref{EH}), we have
\begin{align*}
\text{Var}[{\tilde \Phi}_{j,N}(z)]& =\sum_{q=2}^N \sum_{q'=2}^N \frac{{\cal J}_q(z)}{q!} \frac{{\cal J}_{q'}}{q'!} 2^{2j} \text{Cov} [ \nu_{j;q}, \nu_{j;q'}]\\
&=\sum_{q=2}^N \sum_{q'=2}^N \frac{{\cal J}_q(z)}{q!} \frac{{\cal J}_{q'}}{q'!} 2^{2j} \delta_{q}^{q'} q! \int_{S^2 \times S^2} \{ \mathbb{E}[{\tilde \beta}_j(x) {\tilde \beta}_j(y)] \}^q d \sigma(x) d \sigma(y)\\
&=\sum_{q=2}^N \frac{{\cal J}_q^2(z)}{(q!)^2} 2^{2j} {\text
Var}[ \nu_{j;q}],
\end{align*}
we write
\begin{align*}
& \hspace{3cm}d_W \left( \frac{{\tilde \Phi}_{j,N}(z)}{\sqrt{\text{Var} [{2^j \Phi}_j(z)]}} ,{\cal N}_N\right)\\
 &\;\;\; \le \frac{\sqrt{2}}{\sigma_N \sqrt{\pi}} \frac{2^{2j}}{\text{Var}[{2^j \Phi}_{j}(z)] }\sum_{q=2}^N \frac{{\cal J}_q(z)}{q!}  \mathbb{E} [| \frac{{\cal J}_q(z)}{q!} {\text Var}[\nu_{j;q}]  -\sum_{q'=2}^N \frac{{\cal J}_{q'}(z)}{q'!}  \langle D  \nu_{j;q},-DL^{-1} \nu_{j;q'}  \rangle_{\mathfrak H} |] \\
&\;\;\; \le \frac{\sqrt{2}}{\sigma_N \sqrt{\pi}} \frac{2^{2j}}{\text{Var}[{2^j \Phi}_{j}(z)] }\sum_{q=2}^N \frac{{\cal J}^2_q(z)}{(q!)^2}  \mathbb{E} [|  {\text Var}[\nu_{j;q}]  - \langle D  \nu_{j;q},-DL^{-1}  \nu_{j;q}  \rangle_{\mathfrak H} |] \\
&\;\;\; \;\;\;+\frac{\sqrt{2}}{\sigma_N \sqrt{\pi}}
\frac{2^{2j}}{\text{Var}[{2^j \Phi}_{j}(z)] }\sum_{q=2}^N
\frac{{\cal J}_q(z)}{q!} \sum_{q \ne q'} \frac{{\cal J}_{q'}(z)}{{q'}!}
\mathbb{E} [|  \langle D  \nu_{j;q},-DL^{-1} \nu_{j;q'}
\rangle_{\mathfrak H} |].
\end{align*}
By Theorem 2.9.1 in \cite{noupebook} and by Cauchy-Schwartz inequality, we have
\begin{align*}
& \hspace{3cm}d_W \left( \frac{{\tilde \Phi}_{j,N}(z)}{\sqrt{\text{Var} [{2^j \Phi}_j(z)]}} ,{\cal N}_N \right)\\
 &\;\;\; \le \frac{\sqrt{2}}{\sigma_N \sqrt{\pi}} \frac{2^{2j}}{\text{Var}[{2^j \Phi}_{j}(z)] }\sum_{q=2}^N \frac{{\cal J}^2_q(z)}{(q!)^2} \{  {\text Var} [   \langle D  \nu_{j;q},-DL^{-1}  \nu_{j;q}  \rangle_{\mathfrak H} ] \}^{1/2} \\
&\;\;\; \;\;\;+\frac{\sqrt{2}}{\sigma_N \sqrt{\pi}}
\frac{2^{2j}}{\text{Var}[{2^j \Phi}_{j}(z)] }\sum_{q=2}^N
\frac{{\cal J}_q(z)}{q!} \sum_{q \ne q'} \frac{{\cal J}_{q'}(z)}{{q'}!}  \{
\mathbb{E} [(  \langle D  \nu_{j;q},-DL^{-1} \nu_{j;q'}
\rangle_{\mathfrak H} )^2] \}^{1/2}.
\end{align*}
Finally, in view of Lemma \ref{lemma-lemma}, we  write
\begin{align*}
&\hspace{3cm}d_W \left( \frac{{\tilde \Phi}_{j,N}(z)}{\sqrt{\text{Var} [{2^j \Phi}_j(z)]}} ,{\cal N}_N\right)\\
 &\le \text{const } \frac{\sqrt{2}}{\sigma_N \sqrt{\pi}} \frac{2^{2j}}{\text{Var}[{2^j \Phi}_{j}(z)] } 2^{-3j} \left\{ \sum_{q=2}^N \frac{{\cal J}^2_q(z)}{(q!)^2} q \sqrt{ \sum_{r=1}^{q-1 } (r-1)!^2 \binom{q-1}{r-1}^4  (2q-2r)! } \right.\\
&\left.\;\;\;+ \sum_{q=2}^N \frac{{\cal J}_q(z)}{q!} \sum_{q \ne q'} \frac{{\cal J}_{q'}(z)}{{q'}!} q \sqrt{\sum_{r=1}^{q \wedge q' } (r-1)!^2 \binom{q-1}{r-1}^2 \binom{q'-1}{r-1}^2 (q+q'-2r)! } \right\}.
\end{align*}
We now bound the two sums by  reproducing in our case calculations analog to those performed in \cite{V-HP}:
\begin{align} \label{q1}
&\sum_{r=1}^{q \wedge q' } (r-1)!^2 \binom{q-1}{r-1}^2 \binom{q'-1}{r-1}^2 (q+q'-2r)! \nonumber\\
&= (q-1)! (q'-1)! \sum_{r=1}^{q \wedge q' }  \binom{q-1}{r-1} \binom{q'-1}{r-1} \binom{q+q'-2r}{q-r} \nonumber\\
&\le (q-1)! (q'-1)! \sum_{r=1}^{q \wedge q' }  \binom{q-1}{r-1}  \binom{q'-1}{r-1} 2^{q+q'-2r} \nonumber\\
&= (q-1)! (q'-1)! 2^{q+q'-2} \sum_{r=0}^{q \wedge q'-1 }  \binom{q-1}{r}  \binom{q'-1}{r} 2^{-2r} \nonumber \\
&\le (q-1)! (q'-1)! 2^{q+q'-2} \left[ \sum_{r=0}^{q \wedge q'-1 }  \binom{q-1}{r} 2^{-r}\right] \left[  \sum_{r=0}^{q \wedge q'-1 } \binom{q'-1}{r} 2^{-r} \right] \nonumber \\
&\le (q-1)! (q'-1)! 2^{q+q'-2} \left[ \sum_{r=0}^{q-1 }  \binom{q-1}{r} 2^{-r}\right] \left[  \sum_{r=0}^{ q'-1 } \binom{q'-1}{r} 2^{-r} \right] \nonumber \\
&= (q-1)! (q'-1)! 2^{q+q'-2} (1+1/2)^{q-1} (1+1/2)^{q'-1}=  (q-1)! (q'-1)! 3^{q+q'-2},
\end{align}
and likewise
\begin{align} \label{q2}
&\sum_{r=1}^{q-1 } (r-1)!^2 \binom{q-1}{r-1}^4  (2q-2r)!\le [(q-1)!]^2 3^{2q-2}.
\end{align}
Since for any finite $z$, as $q \to \infty$, we have
$$\frac{{\cal J}_q(z)}{q!}=\frac{\phi(z) H_{q-1}(z) }{q!}  \le \text{const }  \frac{\sqrt{\phi(z)} }{\sqrt{q!} \sqrt{q}  (q-1)^{\frac 1 4}},$$
from (\ref{q1}) and (\ref{q2}), we obtain
\begin{align*}
&\sum_{q=2}^N \frac{{\cal J}_q(z)}{q!} \sum_{q \ne q'} \frac{{\cal J}_{q'}(z)}{{q'}!} q \sqrt{\sum_{r=1}^{q \wedge q' } (r-1)!^2 \binom{q-1}{r-1}^2 \binom{q'-1}{r-1}^2 (q+q'-2r)! } \\
&\;\;\; \le \sum_{q=2}^N \frac{{\cal J}_q(z)}{q!} \sum_{q \ne q'} \frac{{\cal J}_{q'}(z)}{{q'}!} q  \sqrt{  (q-1)! (q'-1)! 3^{q+q'-2}} \\
&\;\;\; \le \text{const } \sum_{q=2}^N \frac{3^{\frac {q-1} 2}}{\sqrt{q}} \sum_{q'=2}^N \frac{3^{\frac {q'-1} 2}}{\sqrt{q'}} \le \text{const } 3^N,
\end{align*}
and
\begin{align*}
& \sum_{q=2}^N \frac{{\cal J}^2_q(z)}{q!^2} q \sqrt{ \sum_{r=1}^{q-1 } (r-1)!^2 \binom{q-1}{r-1}^4  (2q-2r)! } \\& \;\;\; \le \sum_{q=2}^N \frac{{\cal J}^2_q(z)}{q!^2} q  (q-1)! 3^{q-1}\le \text{const } \sum_{q=2}^N \frac{3^{q-1}}{q} \le \text{const } 3^N.
\end{align*}
Since $\sigma_N^2 \ge 1- \text{const } N^{- \frac 1 2} $, it follows that the second term, as $N \to \infty$ is at most equal to
\begin{align*}
&d_W \left( \frac{{\tilde \Phi}_{j,N}(z)}{\sqrt{\text{Var} [{2^j \Phi}_j(z)]}} ,{\cal N}_N\right) \le \frac{\text{const }}{\sqrt{1-N^{- \frac 1 2}}} \frac{3^N}{2^{j}}.
\end{align*}
\begin{itemize}
\item For the third term, by Proposition 3.6.1 in
\cite{noupebook}, 
\end{itemize}
\begin{align*}
d_W \left({\cal N}_N, {\cal N}\right) &\le \sqrt{\frac{2}{\pi}} \frac{1}{1\vee
\sqrt{\frac{\text{Var} [{\tilde \Phi}_{j,N}(z)]} {\text{Var}
[{2^j \Phi}_{j}(z)]} }} \left|1-\frac{\text{Var} [{\tilde
\Phi}_{j,N}(z)]}{\text{Var} [{2^j \Phi}_{j}(z)]}
\right|=\sqrt{\frac{2}{\pi}} \left| \frac{\sum_{q=N+1}^\infty
\frac{{\cal J}_q^2(z)}{q!}  \frac{2^{2j} \mathbb{E}[\nu^2_{j,q}] }{q!}
}{\sum_{q=2}^\infty \frac{{\cal J}_q^2(z)}{q!}  \frac{2^{2j}
\mathbb{E}[\nu^2_{j,q}] }{q!} }  \right| \\
&\le \text{const }
N^{-\frac 1 2}.
\end{align*}
Summing up the three bounds, choosing the speed $N={\log(2^j )}/{2}$ and observing that the dominant term is $N^{-\frac 1 4}$,  we arrive at the statement.
\end{proof}

\begin{remark}
To obtain a bound on the Kolmogorov distance, it is enough to recall the standard inequality $$d_{Kol}(F,{\cal N}) \le 2 \sqrt{d_W(F,{\cal N})}.$$
\end{remark}

\end{document}